\newtheorem{thm}{Theorem}[section]
\newtheorem{prop}[thm]{Proposition}
\newtheorem{cor}[thm]{Corollary}
\newtheorem{lemma}[thm]{Lemma}
\theoremstyle{definition}
\newtheorem{defn}[thm]{Definition}
\theoremstyle{remark}
\newtheorem{remark}[thm]{Remark}
\newtheorem{example}[thm]{Example}
\newtheorem{notation}[thm]{Notation}
\newcommand{\lvA}{{\mathbb A}}
\newcommand{\lvD}{{\mathbb D}}
\newcommand{\lvZ}{{\mathbb Z}}
\newcommand{\lvG}{{\mathbb G}}
\newcommand{\sF}{\mathscr{F}}
\newcommand{\sE}{\mathscr{E}}
\newcommand{\sP}{\mathscr{P}}
\newcommand{\sO}{\mathscr{O}}
\newcommand{\sN}{\mathscr{N}}
\newcommand{\sL}{\mathscr{L}}
\newcommand{\pardom}{{\left(\frac{1}{\vec{r}}\lvZ\right)^{op}}}
\newcommand{\B}{{\bf B}}
\newcommand{\C}{{\bf C}}
\newcommand{\Spec}{\mathrm{Spec}}
\newcommand{\Func}{{\mathrm{Func}}}
\newcommand{\Bun}{{\mathrm{Bun}}}
\newcommand{\coker}{{\mathrm{coker}}}
\newcommand{\vect}{\mathrm{Vect}}
\newcommand{\Rep}{{\mathrm{Rep}}}
\newcommand{\red}{{\rm red}}
\newcommand{\triv}{{\rm triv}}
\newcommand{\iso}{\stackrel{\sim}{\rightarrow}}
\newcommand{\rk}{{\rm rk}}
\newcommand{\op}{{\rm op}}
\newcommand{\Deck}{{\rm Deck}}
\newcommand{\para}{{\rm par}}
\newcommand{\Ob}{{\rm Ob}}
\newcommand{\ef}{\mathbf{F}}
\newcommand{\eg}{\mathbf{G}}
\begin{document}

\title{Pullback of Parabolic Bundles and Covers of ${\mathbb P}^1\setminus\{0,1,\infty \}$}
\author{Ajneet Dhillon \and Sheldon Joyner}

\address{Department of Mathematics \\ University of Western Ontario \\ London Ontario N6A 5B7}

\email{adhill3@uwo.ca}
\email{sjoyner@uwo.ca}

\subjclass[2000]{14H60}

\keywords{Parabolic vector bundles, Nori finite bundles}

\begin{abstract}We work over an algebraically closed ground field of characteristic zero. A $G$-cover of ${\mathbb P}^1$
ramified at three points allows one to assign to each finite dimensional representation   $V$ of $G$ a vector bundle $\oplus \sO(s_i)$ on 
${\mathbb P}^1$ with parabolic structure at the ramification points. This produces a tensor functor from representation of $G$ to vector
bundles with parabolic structure that characterises the original cover. This work attempts to describe this tensor functor in terms of
group theoretic data. More precisely, we construct a pullback functor on vector bundles with parabolic structure and describe the
parabolic pullback of the previously described tensor functor.
\end{abstract}

\maketitle

\section{Introduction}

We work over an algebraically closed ground field $k$ of characteristic zero. If $G$ is a finite
group then by \cite{nori:76} a $G$-torsor $f:X\rightarrow Y$ in the category of algebraic varieties 
can be thought of as a tensor functor $\Rep\text{-}G\rightarrow \vect(Y)$. Concretely the associated tensor functor sends the representation $V$
to the vector bundle $f_*(V\otimes\sO)^G$. When the cover ramifies, as was observed in \cite{nori:82}, we need to consider
tensor functors into the category of vector bundles with appropriate parabolic structure. 

In the case where $Y={\mathbb P}^1$ then we have $f_*(V\times \sO)^G=\oplus \sO(s_i)$. The integers $s_i$ are difficult to 
compute and one of our results is to find an upper bound on them when there is ramification at $0,1$ and $\infty$ only. The bound \ref{t:bound}, \ref{e:improve}, improves the known bound in \cite{borne:07}.
There is one case in which it is easy to compute the integers $s_i$, namely when the group $G$ is cyclic. Our method is a kind of reduction to the cyclic case by removing ramification at $0$. More precisely,
the endomorphism $z\mapsto z^n$ of ${\mathbb P}^1$ algebraically deloops loops around the origin. Pulling back a cover along this morphism removes ramification 
of order $n$ at the origin. To make our method work we need to define a pullback morphism for parabolic bundles. As in \cite{iyersimpson:07} and \cite{borne:07} this entails use of an equivalence of categories due to Biswas, \cite{biswas:97}, between parabolic bundles of a certain kind, and vector bundles on an associated root stack. The pullback operation is difficult to reverse, that is given a
morphism $f:X\rightarrow Y$ of smooth projective curves and a parabolic bundle $\sF_\bullet$  on $X$, to construct a parabolic bundle on $Y$ that pulls back to $\sF_\bullet$. In fact, the difficulty in reversing the parabolic pullback gives a new explanation for the fact that it is difficult to compute the $s_i$.

The interest in computing the $s_i$ lies in the following. A finite quotient $q:F_2\twoheadrightarrow G$ of the free group on two letters produces a cover $X_q\rightarrow {\mathbb P}^1$ ramified at three points. The absolute Galois group ${G_{\mathbb Q}}$ of ${\mathbb Q}$ acts faithfully on such covers. However, given $q$, the Galois action is difficult to understand, and it is not known what finite quotient of ${G_{\mathbb Q}}$ acts, sending the cover to some other non-isomorphic cover. One way to try to understand this question is to give a more algebraic construction of the cover. The theory of tannakian categories allows one to do this. One should view the cover as a tensor functor into parabolic bundles and then understand the Galois action on such tensor functors. This work should be seen as a first step towards understanding these tensor functors. In this paper we understand their parabolic pullbacks. To understand the original functor amounts to faithfully flat descent for parabolic bundles. This will be a topic of future work.

In section two we recall some results of Nori on principal bundles and tensor functors. The third section
recalls the notion of root stack introduced in \cite{cadman:06}. Section four introduces parabolic bundles in our context.
The definition here is equivalent to the one in \cite{mehta:80}. We also recall from \cite{yokogawa:95} the construction of tensor product
and internal hom for parabolic bundles.
Section five is devoted to proving the orbifold-parabolic correspondence in our
context. This result is not new and goes back to \cite{biswas:97}. The formulation here is based on the results of \cite{borne:07}.

The new results begin in section six. We describe a construction on parabolic bundles that corresponds to pullback
of orbifold bundles. In section seven we use some combinatorics to describe the case of cyclic covers. The final section
gives an upper bound on the integers $s_i$ described above, in the case of a $G$-cover of ${\mathbb P}^1\setminus\{0,1,\infty \}$.
The group $G$ need not be abelian here.

\section*{Acknowledgements}
The authors wish to thank Donu Arapura and Jochen Heinloth for very helpful advice and conversations. The parabolic pullback was originally described to
the first named author by I. Biswas through a conversation. The authors first became interested in this topic through a talk given by V. Balaji at the
University of Western Ontario in 2009.

\section*{Notations and Conventions}
\begin{enumerate}[(i)]
\item $k$ an algebraically closed field of characteristic 0.
\item $X$ a connected smooth projective curve over $k$.
\item For $x\in {\mathbb R}$ denote by $\lfloor x \rfloor$ the floor of $x$, i.e. the largest integer smaller than $x$.
\end{enumerate}

\section{Some Results of { Nori}}

In this section we recall some results from \cite{nori:76} and \cite{nori:82}. We begin by recalling the notion of a tannakian category. 
For a less terse formulation refer to \cite{saavedra:72} or \cite{deligne:82t}.

Let $L$ be a field. We denote by $\vect(L)$ the category of finite-dimensional $L$-vector spaces.

\begin{defn}A \emph{tannakian category} over $L$ consists of a quadruple $({\mathbf{C}}, \otimes, F, U)$ where 
\newline T1. ${\mathbf{C}}$ is a small, $L$-linear, abelian category.
\newline T2. $F : {\mathbf{C}} \rightarrow \vect(L)$ is an $L$-linear additive faithful exact functor called the fiber functor.
\newline T3. $\otimes : {\mathbf{C}}\times {\mathbf{C}} \rightarrow {\mathbf{C}}$ is an associative and commutative functor that is $L$-linear in each variable.
\newline T4. $U$ is a unit for $\otimes.$
\newline This data is subject to the following constraints:
\newline C1. $F$ preserves $\otimes$.
\newline C2. $F$ preserves the associativity and commutativity constraints.
\newline C3. $FU \stackrel{\sim}{\rightarrow} k.$
\newline C4. $\dim FV = 1$ if and only if there exists $V^{-1} \in \mathrm{Objects}({\mathbf{C}})$ such that
$V \otimes V^{-1}\cong U$.
\end{defn}

\begin{remark}
 One uses \cite[Proposition 1.20]{deligne:82t} to see that the category $\mathbf{C}$ is necessarily rigid.
\end{remark}

If $G$ is an affine group scheme over $k$ then the category $\Rep\text{-}G$ of finite dimensional left representations 
of $G$ is a tannakian category over $k$. In fact :

\begin{thm}
 Any tannakian category over $k$ is equivalent to $\Rep\text{-}G$ for some affine group scheme $G$ over $k$. Under this
correspondence a homomorphism of affine group schemes corresponds to a tensor functor that
commutes with fiber functor and preserves units. 
\end{thm}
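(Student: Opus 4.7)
The approach I would take is the classical Saavedra--Deligne reconstruction. The plan is to build $G$ as the group scheme of tensor automorphisms of the fiber functor $F$, and then show that $\mathbf{C}$ is equivalent to its representation category by directly verifying the axioms of a tensor equivalence.

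First I would define a functor $\underline{\mathrm{Aut}}^\otimes(F)$ from commutative $k$-algebras to groups by sending $R$ to the set of natural automorphisms $\eta$ of the base-changed fiber functor $F_R = (-\otimes_k R)\circ F$ satisfying $\eta_{V\otimes W} = \eta_V\otimes\eta_W$ and $\eta_U = \mathrm{id}$. The first substantial step is to show that this functor is representable by an affine group scheme $G$. One builds its coordinate Hopf algebra as a suitable quotient of the coend $\int^{V\in \mathbf{C}} F(V)^\vee\otimes F(V)$, modulo relations encoding naturality in $V$ and compatibility with the tensor structure, with the antipode supplied by rigidity (the remark following the definition, via \cite[Proposition 1.20]{deligne:82t}).

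Once $G$ is constructed, the universal property provides on each $F(V)$ a natural $G$-action, hence a $k$-linear tensor functor
\[
\tilde F : \mathbf{C} \longrightarrow \Rep\text{-}G, \qquad V\mapsto F(V),
\]
whose composite with the forgetful functor to $\vect(k)$ recovers $F$. I would then verify that $\tilde F$ is an equivalence. Faithfulness is inherited from the faithfulness of $F$ (axiom T2). Fullness is reduced, via the internal Hom produced by rigidity, to computing $\mathrm{Hom}_{\mathbf{C}}(U, W)$ for an object $W$, and unravelled this becomes exactly the definition of $G$-invariants of $F(W)$. Essential surjectivity is the standard Tannakian density argument: any finite-dimensional $G$-representation is a subquotient of some $F(V)^{\oplus n}$, and one uses the exactness of $F$ to lift such a presentation to a subquotient of $V^{\oplus n}$ inside $\mathbf{C}$.

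The second assertion, that morphisms of affine group schemes correspond to tensor functors preserving the fiber functor and unit, is formal: a homomorphism $H\to G$ induces restriction of scalars $\Rep\text{-}G \to \Rep\text{-}H$ compatible with the forgetful functors, while conversely a tensor functor compatible with the fiber functors pulls back $\underline{\mathrm{Aut}}^\otimes$, and one checks these constructions are mutually inverse. The main obstacle in the whole argument is the representability of $\underline{\mathrm{Aut}}^\otimes(F)$ together with the identification of its coordinate ring as a commutative Hopf algebra: rigidity is what produces the antipode, and exactness of $F$ is what ensures the coend construction yields a well-behaved algebra rather than a mere vector space.
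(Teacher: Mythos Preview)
Your sketch is the standard Saavedra--Deligne reconstruction and is essentially correct as an outline. However, the paper itself does not prove this theorem: it is stated in the preliminaries section as a classical result, with the references \cite{saavedra:72} and \cite{deligne:82t} given earlier for the full treatment, and no proof environment follows the statement. So there is nothing to compare your argument against in the paper; your approach is precisely the one in those references.
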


For a scheme $X$ over $k$ denote by $\vect(X)$ the category of algebraic vector bundles over $X$. 
The category $\vect(X)$ is a $k$-linear tensor category. The tensor product is associative and 
commutative and has a unit. Taking the fibre over a $k$-point gives it the structure of a tannakian category.

\begin{defn}
A rigid tensor $G$-functor on $X$ is an $R$-linear exact $\otimes$-functor $F: \Rep\text{-}G \rightarrow \vect(X)$ such that
\newline F1. $F$ commutes with $\otimes$ 
\newline F2. $F$ preserves the associativity and commutativity constraint
\newline F3. $\rk FV = \dim V$
\newline F4. $F(V_{\triv}) = {\sO}_{X}$
\end{defn}

We denote the category of such functors by $\Func^{\otimes}(\Rep\text{-}G, \vect(X)).$
A morphism in this category is a natural transformation $\eta : F \rightarrow G$ such that the
following diagram commutes :

\begin{center}
\beginpgfgraphicnamed{Dessin-g1.eps}
\begin{tikzpicture}
 \node (tl) at (0,2) {$\otimes_{i\in I} F(X_i)$} ; 
 \node (bl) at (0,0) {$\otimes_{i\in I} G(X_i)$} ; 
 \node (tr) at (4,2) {$F(\otimes_{i\in I} X_i)$} ; 
 \node (br) at (4,0) {$G(\otimes_{i\in I} X_i)$.} ; 
 \draw [>=latex,->] (tl) --node[above=1pt] {$\sim$} (tr);
 \draw [>=latex,->] (bl) --node[above=1pt] {$\sim$} (br);
 \draw [>=latex,->] (tr) --node[right=1pt] {$\eta$} (br);
 \draw [>=latex,->] (tl) --node[left=1pt] {$\eta$} (bl);
\end{tikzpicture}
\endpgfgraphicnamed
\end{center}
Such a natural transformation is necessarily an isomorphism, \cite[Proposition 1.13]{deligne:82t}.

Given $P \rightarrow X,$ a $G$-torsor, we obtain a natural functor $$F_{P}\in \Func^{\otimes}(\Rep\text{-}G, \vect(X))$$ given by 
$V \mapsto P\times_{G}V.$

We denote by $\Bun_{G,X}$ the category of $G$-torsors over $X$.
Notice that all the morphisms in this category are isomorphisms.

\begin{thm}
\label{t:nori}
There is an equivalence of categories
\[
\Bun_{G,X} \stackrel{\sim}{\rightarrow}\Func^{\otimes}(\Rep\text{-}G, \vect(X)).
\]
\end{thm}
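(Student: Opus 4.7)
The plan is to exhibit a quasi-inverse $\Psi$ to the functor $\Phi:P\mapsto F_P$ described above. The target of $\Psi$ will be built by Tannakian reconstruction performed relatively over $X$.

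First, I would verify that $\Phi$ is well defined: if $P\to X$ is a $G$-torsor, then $V\mapsto P\times_G V$ is exact (locally $P$ is trivial, so this becomes $V\mapsto V\otimes_k\sO_U$), carries the trivial representation to $\sO_X$, respects tensor products because $P\times_G(V\otimes W)\cong (P\times_G V)\otimes(P\times_G W)$, and respects rank. Morphisms of torsors induce natural transformations in the obvious way.

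To build $\Psi$, I would use the regular representation $k[G]$, viewed as a commutative algebra object in $\Rep\text{-}G$ under the left $G$-action, equipped additionally with a right $G$-action that commutes with the left action. Applying an $F\in\Func^\otimes(\Rep\text{-}G,\vect(X))$, I obtain a sheaf of commutative $\sO_X$-algebras $\sA:=F(k[G])$ carrying a right $G$-action. Set $\Psi(F):=\Spec_{\sO_X}\sA\to X$. I would then show $\Psi(F)$ is a $G$-torsor: by (F3), $\sA$ is locally free of rank $|G|$, and the freeness/transitivity of the right $G$-action translates via $F$ into the assertion that the tensor-categorical map $k[G]\otimes k[G]\to k[G]\otimes k[G]$ sending $g\otimes h\mapsto g\otimes gh$ is an isomorphism in $\Rep\text{-}G$, which is a purely representation-theoretic identity preserved by any tensor functor. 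The natural comparison $F_{\Psi(F)}(V)=(\sA\otimes V)^G$ recovers $F(V)$ since $F$ preserves the isomorphism $(k[G]\otimes V)^G\cong V$ in $\Rep\text{-}G$ (this is the coend/induction presentation of $V$). This produces the natural isomorphism $\Phi\circ\Psi\simeq\mathrm{id}$. Conversely $\Psi\circ\Phi\simeq\mathrm{id}$ is immediate because $\Spec_{\sO_X}(P\times_G k[G])\cong P$.

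For full faithfulness, since both categories are groupoids it suffices to check bijectivity of morphism sets; equivalently to the above essential-surjectivity argument applied to $\underline{\mathrm{Isom}}^\otimes(F_P,F_Q)$, which by the same $k[G]$-trick is represented by $\underline{\mathrm{Isom}}_G(P,Q)$.

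The main obstacle is the essential surjectivity step, and within it the key lemma that $\sA=F(k[G])$ is actually a principal homogeneous sheaf of algebras rather than merely a $G$-equivariant algebra. The verification must be purely tensor-categorical (one cannot appeal to fibres of $F$ at points, since $\vect(X)$ has no obvious fibre functor consistent with the $G$-action globally); the argument proceeds by noting that the algebra structure and $G$-action on $k[G]$ together exhibit $k[G]\otimes k[G]$ as isomorphic to $k[G]\otimes k^{|G|}$ in $\Rep\text{-}G$ via the shear map above, and this isomorphism, being preserved by the tensor functor $F$, trivialises $\sA$ after base change to itself, proving that $\Spec\sA\to X$ is a $G$-torsor. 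Once this is in place everything else is formal.
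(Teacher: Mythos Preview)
The paper does not give a proof; it consists of the single line ``See \cite{nori:76}.'' Your strategy --- reconstruct the torsor as $\Spec_{\sO_X}F(k[G])$, using that a tensor functor carries the commutative algebra object $k[G]$ with its commuting right $G$-action to a sheaf of $\sO_X$-algebras with $G$-action, and verify the torsor condition by transporting the shear isomorphism $k[G]\otimes k[G]\cong k[G]\otimes k[G]_{\triv}$ from $\Rep\text{-}G$ --- is the standard argument and is essentially Nori's.

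One step is imprecise as written. You claim that $F_{\Psi(F)}(V)=(F(k[G])\otimes V)^G$ coincides with $F(V)$ because ``$F$ preserves the isomorphism $(k[G]\otimes V)^G\cong V$ in $\Rep\text{-}G$.'' But in the associated-bundle expression the group $G$ acts on the factor $V$ through the representation $\rho$, and for nonabelian $G$ the maps $\rho(g)\colon V\to V$ are \emph{not} morphisms in $\Rep\text{-}G$; this invariants computation is therefore not one that $F$ is guaranteed to transport. The clean fix is to use instead the honest $\Rep\text{-}G$-isomorphism $k[G]\otimes V\iso k[G]\otimes V_{\triv}$, $e_g\otimes v\mapsto e_g\otimes g^{-1}v$. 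Applying $F$ yields an $F(k[G])$-module isomorphism between $F(V)\otimes_{\sO_X}F(k[G])$ and $(V\otimes_k\sO_X)\otimes_{\sO_X}F(k[G])$, so the pullback of $F(V)$ to $P=\Spec_{\sO_X}F(k[G])$ is canonically $V\otimes_k\sO_P$; a routine check shows the induced descent datum along the torsor $P\to X$ is precisely the one defining $P\times_G V$, whence $F(V)\cong F_P(V)$. With this adjustment your argument is complete.
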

\begin{proof} 
See \cite{nori:76}. 
\end{proof}

We will mostly be interested in the case where $G$ is a finite group and $X={\mathbb P}\setminus \{0,1,\infty\} $. 
To make this setup more useful in this case we need a ramified version of this theorem. 
Such a theorem already exists in \cite{nori:82}, but we wish to restate things in terms of stacks.
For now let us record the following corollary.

\begin{cor}
\label{c:nori}
Let $H$ be another finite group acting on $X$.
 Denote by $\Bun^H_{G,X}$ the category of $G$-torsors with an action of $H$ that commutes with the action of $G$.
Then we have an equivalence of categories
$$
\Bun^H_{G,X} \stackrel{\sim}{\rightarrow}\Func^{\otimes}(\Rep\text{-}G, \vect_H(X)).
$$
Here $\vect_H(X)$ is the category of $H$-vector bundles on $X$.
\end{cor}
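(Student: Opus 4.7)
The plan is to deduce this from Theorem \ref{t:nori} by treating the $H$-action as an equivariance structure on both sides of the equivalence. The $H$-action on $X$ induces an action on $\Bun_{G,X}$ by pullback $h \mapsto h^*$, and likewise an action on $\vect(X)$ and hence (by post-composition) on $\Func^{\otimes}(\Rep\text{-}G,\vect(X))$. The key observation is that the equivalence of Theorem \ref{t:nori} is natural in $X$, so it intertwines these two $H$-actions. Taking $H$-equivariant objects on both sides should then yield the corollary, provided one identifies $H$-equivariant $G$-torsors with objects of $\Bun^H_{G,X}$, and $H$-equivariant tensor functors with tensor functors into $\vect_H(X)$.

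For the forward direction I would check directly that given an $H$-equivariant $G$-torsor $P \to X$ whose $H$- and $G$-actions commute, the associated bundle $P\times_G V$ inherits a natural $H$-action descending from $P$, so $V \mapsto P\times_G V$ lands in $\vect_H(X)$. Tensor compatibility, preservation of units and rank are inherited from the case without $H$-action. For the inverse, given $F \in \Func^{\otimes}(\Rep\text{-}G,\vect_H(X))$, composing with the forgetful functor $\vect_H(X) \to \vect(X)$ produces a tensor functor to which Theorem \ref{t:nori} assigns a $G$-torsor $P$. The $H$-equivariance data $\phi_h(V) : h^*F(V) \iso F(V)$ is compatible with tensor products and satisfies the cocycle condition in $h$, so it assembles into an isomorphism of tensor functors $h^*F \iso F$ for each $h \in H$. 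Applying the full faithfulness half of Theorem \ref{t:nori} translates this into an isomorphism $h^*P \iso P$ of $G$-torsors, giving $P$ an $H$-equivariant structure that by construction commutes with the $G$-action.

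The main obstacle will be bookkeeping the cocycle conditions: one must verify that the cocycle identities for $\{\phi_h\}$ translate exactly into cocycle identities for the isomorphisms $h^*P \iso P$, and that the resulting $H$-action on $P$ really commutes with the $G$-action. Both amount to checking that the equivalence of Theorem \ref{t:nori} is functorial in sufficiently strong sense, which should follow from a careful reading of Nori's construction in \cite{nori:76}, since both the assignment $P \mapsto F_P$ and its quasi-inverse are defined by canonical constructions. Finally, verifying that the two constructions are mutually quasi-inverse reduces to the unadorned equivalence, since forgetting the $H$-structure on either side returns us to the setting of Theorem \ref{t:nori}.
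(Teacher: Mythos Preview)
Your proposal is correct and follows essentially the same approach as the paper: both directions use Theorem~\ref{t:nori} to transport the $H$-equivariance data across the equivalence, with the forward direction noting that $P\times_G V$ inherits an $H$-action and the inverse direction using that the isomorphisms $h^*F\cong F$ yield isomorphisms $h^*P\cong P$. If anything, your write-up is more careful than the paper's own proof, which is a brief sketch that does not explicitly address the cocycle conditions you rightly flag as the main bookkeeping point.
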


\begin{proof}
 Given a $G$-torsor $P\rightarrow X$ with a commuting $H$-action we obtain for each $h \in H$ a tensor 
functor 
$$
F_h : \Rep\text{-}G \rightarrow \vect(X).
$$
But as the pullbacks $P\times_{X,h} X$ are all isomorphic the functors above are all isomorphic by the
theorem so we obtain a functor into $\vect_H(X)$.

Conversely suppose that we have a tensor functor 
$$
F : \Rep\text{-}G \rightarrow \vect_H(X).
$$
Ignoring the $H$-action we obtain a torsor $P\rightarrow X$. But now the pullbacks $P \times_{X,h}X$ are all isomorphic
as the original bundles were $H$-bundles.
\end{proof}

\section{Root Stacks}
\label{RS}

In this section, we recall some constructions from \cite{cadman:06}. 

We shall implicitly make use of the following fact throughout this section : to give a morphism from 
a scheme $S$ to the quotient stack $[\lvA^k/\lvG_m^k]$ is the same as giving a tuple $(\sL_i, s_i)_{i=1}^k$ of line bundles $\sL_i$ on $S$ and sections $s_i\in\Gamma(S,\sL_i)$, see \cite[Lemma 2.1.1]{cadman:06}.

Given a $k$-tuple $\vec{r}=(r_{1}, \ldots, r_{k})$ of positive integers there is a morphism of quotient stacks
\[
\theta_{\vec{r}}:\left[ \lvA^{k}/\lvG_{m}^{k}\right]
\rightarrow
\left[ \lvA^{k}/ \lvG_{m}^{k}\right]
\]
induced by the morphism 
\[
\lvA^{k} \rightarrow \lvA^{k}
\]
\[
(x_{1}, \ldots, x_{k}) \mapsto
(x_{1}^{r_{1}}, \ldots , x_{k}^{r_{k}}).
\]

\begin{defn}
Let $\mathbb{D} = (D_{1}, \ldots, D_{k})$ be a $k$-tuple of effective Cartier divisors on a scheme $S$. This data defines a morphism
$S\rightarrow [\lvA^{k}/\lvG_{m}^{k}]$.
Define the root stack 
$S_{\mathbb{D}, \vec{r}}$ to be 
\[
S_{\mathbb{D}, \vec{r} }
= 
S\times_{\left[\lvA^{k}/\lvG_{m}^{k}\right], \theta_{\vec{r}}}\left[\lvA^{k}/\lvG^{k}_{m}\right].
\]
\end{defn}

\begin{remark} 
\label{r:lift}
Let $f : T\rightarrow S$ be a morphism. A lift of $f$ to a $T$-point of $S_{\mathbb{D}, \vec{r}}$ is the same as giving 
\[
(M_{1}, \ldots, M_{k}, t_{1}, \ldots, t_{k}, \phi_{1}, \ldots, \phi_{k})
\] 
where $M_{i}$ are line bundles on $T$, $\phi_{i}$ are isomorphisms
$M_{i}^{r_{i}} \iso 
f^{*}{\sO}(D_{i})$ and $t_{i}$ are global sections of $M_{i}$ such that 
\[ 
\phi_{i}(t_{i}^{r_{i}}) = s_{D_{i}},
\]
where $s_{D_i}$ denotes the tautological section of $\sO(D_i)$ vanishing along $D_i$.
\end{remark}

\begin{prop} \label{p:quotroot}
Let $Y$ be a smooth projective curve with an action of  a finite group $G$. Let
$\psi: Y \rightarrow Y/G = X$ be the projection and assume that the action is generically free.
 Let the ramification divisor of $\psi$ be $p_{1}+ \ldots + p_{k}$ with ramification indices $r_{1}, \ldots, r_{k}$. 
Set $\mathbb{D} = (p_{1}, \ldots , p_{k})$ and $\vec{r} = (r_{1}, \ldots, r_{k})$. Then 
\[
\left[ Y/G\right] \iso X_{\mathbb{D}, \vec{r}}.
\]
\end{prop}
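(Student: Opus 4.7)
The plan is to use the universal property of the root stack as formulated in Remark \ref{r:lift}. The natural morphism $\pi: [Y/G] \to X = Y/G$ is already at hand; the task is to lift it by producing, for each $i$, a line bundle $M_i$ on $[Y/G]$, a global section $t_i$ of $M_i$, and an isomorphism $\phi_i: M_i^{r_i} \iso \pi^*\sO_X(p_i)$ with $\phi_i(t_i^{r_i}) = \pi^* s_{p_i}$.

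To produce this data, write the scheme-theoretic preimage as $\psi^{-1}(p_i) = r_i E_i$, where $E_i = \sum_j q_{i,j}$ is the reduced sum of the points lying over $p_i$; the group $G$ permutes the $q_{i,j}$ transitively. Since $E_i$ is a $G$-invariant divisor on $Y$, the line bundle $\sO_Y(E_i)$ carries a canonical $G$-equivariant structure and its tautological section $s_{E_i}$ is $G$-invariant. Under the equivalence between $G$-equivariant sheaves on $Y$ and sheaves on $[Y/G]$, this descends to a line bundle $M_i$ on $[Y/G]$ together with a section $t_i$. The equality $r_i E_i = \psi^* p_i$ of divisors on $Y$ yields a $G$-equivariant isomorphism $\sO_Y(E_i)^{r_i} \iso \psi^*\sO_X(p_i)$ sending $s_{E_i}^{r_i}$ to $\psi^* s_{p_i}$, and descends to the desired $(M_i^{r_i},\phi_i)$ on $[Y/G]$. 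By Remark \ref{r:lift} this data defines a morphism $f: [Y/G] \to X_{\mathbb{D}, \vec{r}}$ over $X$.

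To conclude that $f$ is an isomorphism, I would argue étale-locally on $X$. Over the open complement $X\setminus\{p_1,\ldots,p_k\}$ the morphism $\psi$ is étale Galois, so both $[Y/G]$ and $X_{\mathbb{D},\vec{r}}$ restrict canonically to this open subscheme and $f$ is the identity there. Near a branch point $p_i$, choose an étale neighborhood $U\to X$ on which $p_i$ is cut out by a uniformizer $z$. Because we are in characteristic zero and the stabilizer of each $q_{i,j}$ acts faithfully on a one-dimensional tangent space, this stabilizer is cyclic of order $r_i$ and its action is linearizable. Hence $\psi^{-1}(U)$ is, up to étale localization, a disjoint union of copies of an étale neighborhood $V\to U$ of $q_{i,1}$ on which a cyclic group $\mu_{r_i}$ acts by the standard character, with $V\to U$ given by $w\mapsto w^{r_i}=z$. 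Then $[Y/G]\times_X U$ is canonically $[V/\mu_{r_i}]$, and by construction $X_{\mathbb{D},\vec{r}}\times_X U$ is also canonically $[V/\mu_{r_i}]$; the morphism $f$ matches these identifications because $s_{E_i}$ pulls back to $w$.

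The main obstacle is the étale-local identification at each branch point: one must invoke the linearizability of the stabilizer action in characteristic zero to identify the local model of the quotient stack with the local model of the root stack, and then verify that the universal map $f$ built above agrees with this identification rather than being twisted by some unit. This compatibility ultimately reduces to the matching of tautological sections $s_{E_i}\mapsto w$, which is built into our choice of uniformizer. Everything else is formal bookkeeping with the universal property of $X_{\mathbb{D},\vec{r}}$.
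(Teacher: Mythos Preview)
Your proposal is correct and follows essentially the same line as the paper: both build the map $[Y/G]\to X_{\mathbb{D},\vec{r}}$ from the reduced preimage divisors $E_i$ (the paper's $D_i$) via the universal property, then verify it is an isomorphism by a local computation near each branch point. The only cosmetic difference is that the paper works Zariski-locally and invokes Abhyankar's lemma to produce an explicit local inverse from the root-stack chart $[\Spec(A[t]/(t^{r_1}-s_{p_1}))/\mu_{r_1}]$ back to $[Y/G]$, whereas you pass to an \'etale neighborhood and use linearizability of the cyclic stabilizer action to identify both stacks with the same model $[V/\mu_{r_i}]$; the underlying content is the same.
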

\begin{proof}
 Let $\pi: X_{\mathbb{D}, \vec{r}} \rightarrow X$ be the canonical morphism.
Write
\[
\psi^{*}(p_{i}) = r_{i}D_{i}.
\]
Then the $D_{i}$ produce a $G$-equivariant morphism 
\[
\alpha: Y \rightarrow X_{\mathbb{D}, \vec{r}}.
\]
Hence the question that we have an isomorphism is local.
 
We consider an open affine $\Spec A \subset X$ with preimage $\Spec B \subset Y.$ 
We may assume $p_{1} \in \Spec A$ and $p_{i} \not \in \Spec A$ for $i >1.$ Let $s_{p_1}$ be
a parameter at $p_1$.
 Then $\pi^{-1}(\Spec A)$ is the quotient stack
\[
\left[
\Spec\left( A[t]/(t^{r_{1}} - s_{p_{1}})\right) /\mu_{r_{1}}\right],
\]
see \cite[Example 2.4.1]{cadman:06}.
We have a diagram
\begin{center}
\beginpgfgraphicnamed{Dessin-g2}
\begin{tikzpicture}
   \node (bl) at (0,0)  {$\Spec\left( A[t]/(t^{r_{1}} - s_{p_{1}})\right)$} ;
   \node (tl) at (0,2)  {$\tilde{Y}$};
   \node (br) at (4.5,0)  {$X$} ;
   \node (tr) at (4.5,2)  {$Y$} ;
    \draw [->] (tl) -- (tr);
  \draw [->] (tl) -- (bl);
   \draw [->] (bl) -- (br);
   \draw [->] (tr) -- (br);
\end{tikzpicture}
\endpgfgraphicnamed
\end{center}
where ${\tilde{Y}}$ is the normalization of $Y$ restricted to $\Spec\left( A[t]/(t^{r_{1}} - s_{p_{1}})\right)$. 
By  Abhyankar's lemma, it is a $G$-torsor and hence we obtain a morphism
\[
\Spec\left( A[t]/(t^{r_{1}} - s_{p_{1}})\right)
\rightarrow 
\left[Y / G\right].
\]
Due to the fact that the torsor $\tilde{Y}$ has a $\mu_r$-action we see that this morphism gives a morphism
\[
\beta: \left[ \Spec\left( A[t]/(t^{r_{1}} - s_{p_{1}})\right) /\mu_{r_1} \right] \rightarrow 
\left[ Y / G \right].
\]
We need to show that $\alpha \cdot \beta$ and $\beta \cdot \alpha$ are automorphisms. But this is easily checked.
\end{proof}

Consider a pair $(\mathbb{D}, \vec{r})$ with $\mathbb{D} = (n_{1}p_{1}, \ldots, n_{k}p_{k})$ and $\vec{r} = (r_{1}, \ldots, r_{k})$. We define 
\[
(\mathbb{D}, \vec{r})_{red} = 
\left((p_{1}, \ldots, p_{k}), \left(\frac{r_{1}}{d_{1}}, \ldots, \frac{r_{k}}{d_{k}}\right)\right)
\]
where $d_{i} = \gcd(n_{i}, r_{i}).$
\begin{prop}
\label{p:reduct}
There is a morphism 
\[
X_{(\mathbb{D}, \vec{r})_{red}}
\rightarrow X_{(\mathbb{D}, \vec{r})}.
\]
\end{prop}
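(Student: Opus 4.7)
The plan is to construct the morphism directly at the level of $T$-points via the universal property of root stacks encoded in Remark \ref{r:lift}. Writing $n_i = d_i n_i'$ and $r_i = d_i r_i'$ so that $\gcd(n_i', r_i') = 1$, the reduced ramification index at $p_i$ is exactly $r_i'$.

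Let $\pi_{\rm red} : X_{(\mathbb{D}, \vec{r})_{\rm red}} \to X$ denote the structure morphism. By Remark \ref{r:lift} applied to the identity, on $X_{(\mathbb{D}, \vec{r})_{\rm red}}$ we have tautological line bundles $N_i$, sections $u_i \in \Gamma(N_i)$, and isomorphisms $\psi_i : N_i^{r_i'} \iso \pi_{\rm red}^{*}\sO(p_i)$ such that $\psi_i(u_i^{r_i'}) = \pi_{\rm red}^{*} s_{p_i}$.

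I would then set $M_i = N_i^{\otimes n_i'}$, $t_i = u_i^{n_i'}$, and take $\phi_i$ to be the composition
\[
M_i^{r_i} = N_i^{n_i' r_i} = \bigl(N_i^{r_i'}\bigr)^{n_i' d_i} \xrightarrow{\psi_i^{\otimes n_i}} \pi_{\rm red}^{*}\sO(p_i)^{\otimes n_i} \iso \pi_{\rm red}^{*}\sO(n_i p_i),
\]
where the last isomorphism is the canonical one. A direct check, using that $n_i' d_i = n_i$ and that $s_{p_i}^{n_i}$ corresponds to $s_{n_i p_i}$ under $\sO(p_i)^{\otimes n_i}\iso\sO(n_i p_i)$, gives $\phi_i(t_i^{r_i}) = \pi_{\rm red}^{*} s_{n_i p_i}$, which is precisely the compatibility required by Remark \ref{r:lift}. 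This data, by the universal property, defines the desired morphism $X_{(\mathbb{D}, \vec{r})_{\rm red}} \to X_{(\mathbb{D}, \vec{r})}$.

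There is no real obstacle here: the entire content is bookkeeping with the universal property, and the nontrivial input is just the observation that since $d_i \mid n_i$, the $n_i'$-th power of an $r_i'$-th root of $\sO(p_i)$ is automatically an $r_i$-th root of $\sO(n_i p_i)$. The one thing worth taking mild care with is the verification that the tautological sections are compatible under the canonical identifications $\sO(p_i)^{\otimes n_i} \iso \sO(n_i p_i)$, which is where the equality $\gcd(n_i, r_i) = d_i$ is implicitly used to make the exponents match.
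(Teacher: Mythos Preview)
Your proposal is correct and follows essentially the same approach as the paper: both construct the morphism by taking suitable powers of the tautological root data on $X_{(\mathbb{D},\vec{r})_{\rm red}}$, exactly your $M_i = N_i^{n_i'}$, $t_i = u_i^{n_i'}$, and $\phi_i = \psi_i^{\otimes n_i}$. The only minor difference is that the paper phrases the construction for an arbitrary $S$-point and explicitly invokes \cite[Remark~2.2.2]{cadman:06} to justify that the lifting criterion of Remark~\ref{r:lift} applies when the source is a stack rather than a scheme; you use the universal point directly and leave this extension implicit.
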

\begin{proof}
Consider a scheme $f:S\rightarrow X$. A lift of $f$ to a point of $X_{(\mathbb{D}, \vec{r})_{red}}$
corresponds to a tuple
$$
(M_{1}, \ldots, M_{k}, t_{1}, \ldots, t_{k}, \phi_{1}, \ldots, \phi_{k}),
$$
where $M_i$ are line bundles, with global sections $t_i$ and isomorphisms
$$
\phi_i : M_i^{r_i/d_i} \iso f^*\sO_X(p_i)\qquad \phi_it_i^{r_i/d_i}=s_{p_i}.
$$
Here $s_{p_i}$ is a section vanishing at $p_i$. 

Now by \cite[Remark 2.2.2]{cadman:06}, the lifting of a morphism of stacks
$X_{(\mathbb{D}, \vec{r})_{red}} \rightarrow X$ to $X_{(\mathbb{D}, \vec{r})}$ is similar to the lifting of a morphism of schemes in that it entails the same data as given in our Remark \ref{r:lift} above.

Observe that
$$
M_i^{n_i/d_i}\qquad t_i^{n_i/d_i}\qquad \phi_i^{n_i}
$$
give the data of a morphism to 
$$
X_{(\mathbb{D}, \vec{r})}.
$$
\end{proof}

\begin{prop}
\label{p:res}
We work in the situation of proposition \ref{p:quotroot}.
Suppose that 
\[
[ Y/G ] = X_{(\mathbb{D}, \vec{r})}.
\]
Consider $f : Z \rightarrow X$ with $Z$ a smooth projective curve.
Denote by $\widetilde{f^{*}Y}$ the normalization of the fibered product
$$
Z \times_X Y.
$$
 Then 
\[
\left[ \widetilde{f^{*}Y}/G \right] = Z_{(f^{*}\mathbb{D}, \vec{r})_{red}}.
\]
\end{prop}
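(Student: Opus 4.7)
The plan is to prove this by applying Proposition \ref{p:quotroot} to the induced $G$-cover $\widetilde{f^{*}Y} \rightarrow Z$, after computing its ramification indices via Abhyankar's lemma. The equality $(\mathbb{D}, \vec{r})_{red}$-data should then match up tautologically with what Proposition \ref{p:quotroot} produces on the $Z$-side.

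First I would verify that the $G$-action on $Y$ lifts: the action on $Y$ induces a $G$-action on $Z \times_X Y$ via the second factor commuting with projection to $Z$, and since normalization is functorial this extends to $\widetilde{f^{*}Y}$. Because $f$ is a map between smooth curves and $Y\rightarrow X$ is generically \'etale, the action remains generically free, and the quotient is $Z$. Thus Proposition \ref{p:quotroot} applies and identifies $[\widetilde{f^{*}Y}/G]$ with a root stack over $Z$ determined by the ramification divisor and ramification indices of $\widetilde{f^{*}Y} \rightarrow Z$.

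The heart of the argument is a local computation of these ramification indices. Write $f^{*}p_{i}=\sum_{j}n_{ij}q_{ij}$. Near a point $q_{ij}$, with local parameter $z$ on $Z$ and $y$ a uniformizer of $Y$ at a point above $p_{i}$, the fiber product $Z\times_{X}Y$ is locally of the form $\Spec k[\![z,y]\!]/(y^{r_{i}}-u z^{n_{ij}})$ for a unit $u$. Abhyankar's lemma gives the normalization, and the ramification index of $\widetilde{f^{*}Y}\rightarrow Z$ at each point above $q_{ij}$ equals $r_{i}/\gcd(n_{ij},r_{i})$. This is precisely the step I expect to be the main technical obstacle, mostly because of the bookkeeping of indexing over both $i$ and $j$, not because of any deep difficulty; the essential content is the same Abhyankar computation already used in the proof of Proposition \ref{p:quotroot}.

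Combining the two previous steps, Proposition \ref{p:quotroot} yields
\[
[\widetilde{f^{*}Y}/G] \;\iso\; Z_{(\mathbb{D}',\,\vec{r}')},
\]
where $\mathbb{D}'$ is the reduced tuple of points $(q_{ij})_{i,j}$ and $\vec{r}'$ is the tuple of integers $(r_{i}/\gcd(n_{ij},r_{i}))_{i,j}$. Unwinding the definition of $(f^{*}\mathbb{D},\vec{r})_{red}$ applied componentwise to the possibly non-reduced divisors $f^{*}p_{i}=\sum_{j}n_{ij}q_{ij}$ gives exactly this data, completing the identification. If desired, the compatibility can instead be verified via the universal property from Remark \ref{r:lift}: the canonical $\alpha$-type map from $\widetilde{f^{*}Y}$ into $Z_{(f^{*}\mathbb{D},\vec{r})_{red}}$ arising from the line bundles $\sO(D_{ij}')$ (where $\psi^{*}q_{ij}=(r_{i}/\gcd(n_{ij},r_{i}))D_{ij}'$) descends to an isomorphism after quotienting by $G$, in complete parallel with the proof of Proposition \ref{p:quotroot}.
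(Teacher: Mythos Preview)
Your proposal is correct and follows essentially the same approach as the paper: reduce to Proposition \ref{p:quotroot} applied to $\widetilde{f^{*}Y}\rightarrow Z$, and compute the ramification indices by the local Abhyankar-type analysis of the curve $y^{r_i}=z^{n_{ij}}$. The paper's proof is simply a terser version of what you wrote, phrasing the local picture as ``the high order cusp $y^n=z^m$ with $d=\gcd(n,m)$ branches'' rather than invoking Abhyankar by name; your added remarks about lifting the $G$-action and unpacking $(f^{*}\mathbb{D},\vec{r})_{red}$ componentwise are exactly the bookkeeping the paper leaves implicit.
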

\begin{proof}
 By the proof of (\ref{p:quotroot}) this result will follow once  we have computed the ramification indices of the morphism
$$
\widetilde{f^*Y} \rightarrow Z.
$$
Infinitesimally locally the morphism $Y\rightarrow X$ is of the form $y\mapsto y^n$ and the morphism
$Z\rightarrow X$ is of the form $z\mapsto z^m$. The pullback is the high order cusp $y^n=z^m$. This
has $d=\gcd(n,m)$ branches in its resolution and a local calculation gives the result.
\end{proof}

We shall need the following result later :

\begin{prop}
\label{p:local}
 Every vector bundle on $X_{(\mathbb{D}, \vec{r})}$ is locally a direct sum of line bundles. Furthermore, when 
$X={\rm Spec}(R)$ with $R$ local then ${\rm Pic}(X_{p,r})$ is cyclic of order $r$ and is generated by the canonical root line bundle.
\end{prop}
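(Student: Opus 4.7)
The plan is to reduce to a local model and then exploit the correspondence between vector bundles on a $\mu_r$-quotient stack and $\mathbb{Z}/r$-graded modules. Since both assertions are local on $X$, I may assume $X=\Spec(R)$ with $R$ local; if no component of $\mathbb{D}$ passes through the closed point then the stack reduces to $\Spec R$ and the claim is immediate, so assume $\mathbb{D}=(p)$ with $p$ cut out by a parameter $s\in R$. By the local description used in the proof of Proposition~\ref{p:quotroot} (see \cite[Example 2.4.1]{cadman:06}),
\[
X_{p,r}=\left[\Spec\bigl(R[t]/(t^r-s)\bigr)/\mu_r\right],
\]
with $\mu_r$ acting via $t\mapsto\zeta t$. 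Writing $B=R[t]/(t^r-s)$, this action corresponds (since $\mu_r$ is diagonalizable) to the natural $\mathbb{Z}/r$-grading on $B$ with $\deg t=1$; consequently a vector bundle on $X_{p,r}$ is the same data as a finitely generated $\mathbb{Z}/r$-graded $B$-module $M$ that is projective as a $B$-module.

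For the first assertion, $B$ is local with maximal ideal $\mathfrak{m}=(t)+\mathfrak{m}_R B$, and its residue field $B/\mathfrak{m}=R/\mathfrak{m}_R$ is concentrated in graded degree $0$. Hence $M$ is free, and graded Nakayama (lifting a homogeneous basis of $M/\mathfrak{m}M$ to homogeneous generators of $M$) produces
\[
M\cong\bigoplus_j B(n_j),\qquad n_j\in\mathbb{Z}/r,
\]
where $B(n)$ denotes $B$ with grading shifted by $n$. Each summand is a line bundle on $X_{p,r}$.

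For the second assertion, the above gives a surjection $\mathbb{Z}/r\twoheadrightarrow{\rm Pic}(X_{p,r})$ defined by $n\mapsto[B(n)]$; it is a homomorphism because $B(n)\otimes_B B(m)\cong B(n+m)$. For injectivity, any graded isomorphism $B(n)\to B(m)$ is multiplication by a homogeneous unit of $B$ in degree $m-n$. But since $t\in\mathfrak{m}$, every homogeneous unit of $B$ sits in degree $0$, forcing $n\equiv m\pmod r$. The generator $B(1)$ is the canonical root line bundle: its tautological section $t$ satisfies $t^r=s$, yielding $B(1)^{\otimes r}\cong\sO(p)$ in accordance with Remark~\ref{r:lift}.

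The main technical point is the grading bookkeeping in the Nakayama step—one must verify that the residue field of $B$ really sits entirely in degree $0$, so that a graded basis of $M/\mathfrak{m}M$ lifts to a graded free basis of $M$. Everything else is a formal consequence of the $\mu_r$-equivariant/$\mathbb{Z}/r$-graded module dictionary.
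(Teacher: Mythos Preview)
Your argument is correct. The paper does not actually prove this proposition: it simply refers the reader to \cite[Proposition~3.12]{borne:07} and its proof. What you have written is essentially the content of that reference --- the local presentation $X_{p,r}=[\Spec(R[t]/(t^r-s))/\mu_r]$, the dictionary between $\mu_r$-equivariant sheaves and $\lvZ/r$-graded $B$-modules, and the graded Nakayama argument over the local ring $B$ are exactly the ingredients Borne uses. So there is no alternative route to compare against; you have supplied the proof that the paper outsources.

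Two very small remarks on presentation. First, since $X$ is a smooth curve, $R$ is a discrete valuation ring with uniformizer $s$, so in fact $\mathfrak{m}_R B=(t^r)\subseteq(t)$ and your maximal ideal is simply $(t)$; this makes the ``residue field in degree $0$'' claim completely transparent. Second, when you say $M$ is projective, it is worth noting explicitly that a vector bundle on the quotient stack is by definition a $\mu_r$-equivariant locally free sheaf on $\Spec B$, hence a finitely generated free $B$-module (as $B$ is local) with a grading; this is implicit in what you wrote but could be stated once to avoid any ambiguity about what ``vector bundle on a stack'' means here.
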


\begin{proof}
 See \cite[Proposition 3.12]{borne:07} and its proof.
\end{proof}

\begin{notation}
 We will denote the canonical root line bundles on $X_{(\mathbb{D}, \vec{r})}$ by $$\sN_1, \ldots, \sN_k.$$
\end{notation}

\section{Parabolic Bundles}

Let $D = n_{1}p_{1}+ \ldots + n_{k}p_{k}$ be an effective divisor on $X$ with $p_{i} \neq p_{j}$ for $i \neq j$ and $n_i\ge 0$. 
We denote by $\lvD$ the tuple $(n_1p_1, n_2p_2, \ldots, n_kp_k)$.
Fix a tuple of integers $\vec{r} = (r_{1}, \ldots, r_{k})$ with $r_{i} \geq 1.$ The set 
\[
\frac{1}{r_{1}}\lvZ \times \ldots \times \frac{1}{r_{k}}\lvZ
\]
has a natural partial ordering with
\[
\left(\frac{x_{1}}{r_{1}}, \ldots, \frac{x_{k}}{r_{k}}\right) \leq
 \left(\frac{y_{1}}{r_{1}}, \ldots, \frac{y_{k}}{r_{k}}\right) 
 \]
 if and only if
 \[
 \frac{x_{i}}{r_{i}} \leq \frac{y_{i}}{r_{i}}
 \]
 for all $i$. We shall often denote the poset 
\[
\frac{1}{r_{1}}\lvZ \times \ldots \times \frac{1}{r_{k}}\lvZ
\]
by
\[
 \frac{1}{\vec{r}} \lvZ.
\]
 If $\vec{\alpha} = (\alpha_{1}, \ldots, \alpha_{k}) \in \frac{1}{\vec{r}}\mathbb{Z}$ then there is a natural shift functor $[\vec{\alpha}]$ on the category of functors
 \[
 \left(\frac{1}{r_{1}}\lvZ \times \ldots \times \frac{1}{r_{k}}\lvZ\right)^{op} \rightarrow \vect(X)
 \]
given by precomposition with the addition functor
\[
 +\vec{\alpha} : \frac{1}{\vec{r}} \lvZ\rightarrow \frac{1}{\vec{r}} \lvZ.
\]
 
 \begin{defn}
 \label{d:pb}
 A parabolic bundle supported on $\mathbb{D}$ with $\vec{r}$-divisible weights is a functor
 \[
 {\sF}_{\bullet}:\left(\frac{1}{r_{1}}\lvZ \times \ldots \times \frac{1}{r_{k}}\lvZ\right)^{op} \rightarrow \vect(X)
 \]
 with natural isomorphisms 
 \[
 j_{\sF_{\bullet},i}: {\sF}_{\bullet} \otimes {\sO}(-n_{i}p_{i})
\iso
 {\sF}_{\bullet}[0, \ldots, 0, 1, 0, \ldots, 0]
 \]
 (with 1 in the $i$th position) making the following diagram commute
 \begin{center}
\beginpgfgraphicnamed{Dessin-g3}
\begin{tikzpicture}
   \node (b) at (2,0)  {${\sF}_{\bullet}$} ;
   \node (tl) at (0,2)  {${\sF}_{\bullet}(-n_{i}p_{i})$};
   \node (tr) at (4,2)  {${\sF}_{\bullet}[0, \ldots, 0, 1, 0, \ldots, 0]$} ;
    \draw [->] (tl) -- (tr);
  \draw [->] (tl) -- (b);
  \draw [->] (tr) -- (b);
\end{tikzpicture}
\endpgfgraphicnamed
\end{center}
This data is required to satisfy the following axioms:
\begin{enumerate}[(i)]
 \item If ${\alpha}_{i} \leq {\alpha}_{i}'\leq {\alpha}_{i}+1$  for all $i$ then 
 $\coker ({\sF}_{\vec{\alpha}'} \hookrightarrow {\sF}_{\vec{\alpha}})$ is a locally free ${\sO}_{D}$-module. 
Here $\vec{\alpha}= (\alpha_{1}, \ldots, \alpha_{k})$ and 
 $\vec{\alpha}' = (\alpha_{1}', \ldots, \alpha_{k}')$.
 \item For every $\vec{\alpha}= (\alpha_{1}, \ldots, \alpha_{k}) \in \frac{1}{\vec{r}}\lvZ$ we have that
 $\sF_{\vec{\alpha}}$ is the fibered product
of $\sF_{(\lfloor \alpha_1 \rfloor,\ldots,\lfloor \alpha_{i-1} \rfloor,\alpha_i,\lfloor \alpha_{i+1} \rfloor,\ldots,\lfloor \alpha_{k} \rfloor)}$ 
over $\sF_{(\lfloor \alpha_1 \rfloor, \ldots, \lfloor \alpha_k \rfloor)}$, i.e
$$
\sF_{\vec{\alpha}} = \bigtimes_{\sF_{(\lfloor \alpha_1 \rfloor, \ldots, \lfloor \alpha_k \rfloor)}} \sF_{(\lfloor \alpha_1 \rfloor,\ldots,\lfloor \alpha_{i-1} \rfloor,\alpha_i,\lfloor \alpha_{i+1} \rfloor,\ldots,\lfloor \alpha_{k} \rfloor)}
$$
\end{enumerate}
\end{defn}
When the context is clear, we write $j_{\sF_{\bullet},i} = j_{i}.$ The morphisms making up the functor
$$
\vec{\alpha} \le \vec{\beta} \qquad \sF_{\vec{\beta}} \rightarrow \sF_{\vec{\alpha}}
$$
are necessarily injective so the second axiom merely asserts that
$$
\sF_{\vec{\alpha}} = \bigcap \sF_{(0,\ldots,0,\alpha_i,0,\ldots,0)},
$$
when $\alpha_i>0$ and the intersection is as submodules of 
$$
\sF_{(0,0,\ldots,0)}.
$$

\begin{remark}
\label{r:ms}
When the underlying divisor is reduced, this definition is equivalent to the original definition of {Mehta} and {Seshadri} in \cite{mehta:80}. 
To spell things out, a {Mehta-Seshadri} parabolic bundle with $\vec{r}$-divisible weights and parabolic structure along $\lvD$ 
consists of a vector bundle $\sE$ and for each $p_{i}$ a filtration of 
\[
\sE_{n_ip_{i}}:=\sE_{p_{i}}\otimes \sO_{X,p_{i}}/\mathfrak{m}_{p_{i}}^{n_i}
\]
given by
\[
\sE_{n_ip_{i}}=F_{1,i}(\sE_{n_ip_{i}}) \supsetneq \ldots \supsetneq
F_{m_{p_{i} },i}(\sE_{n_ip_{i}}) \supsetneq F_{m_{p_{i}}+1,i}(\sE_{n_ip_{i}})
=0
\]
and rational numbers $(\alpha_{i,j})_{1 \leq j \leq m_{p_{i}}}$ of the form $l/r_{i}$ satisfying 
\[
0 \leq \alpha_{i,1}< \ldots <\alpha_{i, m_{p_{i}}}<1
\]
subject to the condition that 
\[
F_{j,i}(\sE_{n_ip_{i}})/F_{j+1,i}(\sE_{n_ip_{i}})
\]
be locally free as modules over $\sO_{X,p_i}/\mathfrak{m}_{p_i}^{n_i}$.
 
Let $\sF_{\bullet}$ be a parabolic bundle as defined in \ref{d:pb}. The quotients
\[
\sF_{(0, \ldots, 0,l/r_{i},0 \ldots, 0)}
/
\sF_{(0, \ldots, 0, 1, 0, \ldots, 0)}
\]
for $0\leq l/r_{i} <1$ define a filtration 
\[
F_{1,i}(\sF_{\bullet}) \supsetneq F_{2,i}(\sF_{\bullet})
\supsetneq \ldots \supsetneq F_{n_{i},i}(\sF_{\bullet}) \supsetneq 0
\]
of $\sF_{(0, \ldots,   0)}/\sF_{(0, \ldots, 0,1,0 \ldots, 0)} = \sF_{(0, \ldots , 0)} \otimes \sO(-n_ip_i)$.
We attach weights 
$\alpha_{i,j}$ to  $F_{j,i}(\sF_{\bullet})$ by setting $\alpha_{i,j} = l/r_{i}$ where $l$ is maximal such that
\[
F_{j,i}(\sF_{\bullet})=\sF_{(0, \ldots, 0, l/r_{i}, 0, \ldots, 0)}/
\sF_{(0, \ldots, 0, 1, 0, \ldots, 0)}.
\]
The process is clearly reversible. 
\end{remark}

\begin{defn}
 A morphism of parabolic bundles is a natural transformation
 \[
 \phi: {\sF}_{\bullet} \rightarrow {\sF}'_{\bullet}
 \]
 such that the following diagram commutes:
 
\begin{center}
\beginpgfgraphicnamed{Dessin-g4}
\begin{tikzpicture}
   \node (bl) at (0,0)  {${\sF'}_{\bullet}(-n_ip_i)$} ;
   \node (tl) at (0,2)  {${\sF}_{\bullet}(-n_ip_i)$};
   \node (br) at (4.5,0)  {${\sF'}_{\bullet}[0,\ldots, 0, 1, 0, \ldots, 0]$} ;
   \node (tr) at (4.5,2)  {${\sF}_{\bullet}[0,\ldots, 0, 1, 0, \ldots, 0]$} ;
    \draw [->] (tl) --node[above=1pt] {$\sim$} (tr);
  \draw [->] (tl) -- (bl);
   \draw [->] (bl) --node[above=1pt] {$\sim$} (br);
   \draw [->] (tr) -- (br);
\end{tikzpicture}
\endpgfgraphicnamed
\end{center}
\end{defn}

Denote by $\vect_{\para}(\mathbb{D}, \vec{r})$ the category of $\vec{r}$-divisible parabolic bundles with parabolic structure along 
$\mathbb{D}.$ By modifying constructions and arguments given in \cite{yokogawa:95}, it is possible to endow this category with the 
structure of rigid tensor category. This entails defining a suitable tensor product and internal hom, which we describe now.
 
We have an addition bifunctor
\[
 + : \left(\frac{1}{\vec{r}}  \lvZ\right) ^\op \times \left(\frac{1}{\vec{r}}\lvZ\right)^\op \rightarrow  \left(\frac{1}{\vec{r}}\lvZ\right)^\op
\]

\begin{defn}
  Let $\sE_\bullet$, $\sF_\bullet$ and $\sP_\bullet$ be parabolic bundles. There is hence a functor
\[
 \sE_\bullet \oplus \sF_\bullet :\left(\frac{1}{\vec{r}} \lvZ\right)^\op \times \left(\frac{1}{\vec{r}} \lvZ\right)^\op \rightarrow \vect(X).
\]
A \emph{bilinear} morphism from
$\sE_\bullet$ and $\sF_\bullet$ to $\sP_\bullet$ is a natural transformation 
$$
\eta: \sE_\bullet \oplus \sF_\bullet  \rightarrow \sP_\bullet \circ +
$$
such that for every local section $f\in F_{\vec{\alpha}}$ (resp. $e\in E_{\vec{\alpha}}$) there is a parabolic
morphism induced from $\eta$
$$
 \sE_\bullet \rightarrow \sP[\vec{\alpha}]_{\bullet}\qquad (\text{resp. } 
 \sF_\bullet \rightarrow \sP[\vec{\alpha}]_{\bullet}).
$$
\end{defn}

As above, let $\vec{\alpha}$ denote $(\alpha_{1}, \ldots, \alpha_{k})$ and similarly for $\vec{\beta}$ and $\vec{\gamma}$.
\begin{defn}
\label{def0}
Given parabolic bundles $\sE_{\bullet}$ and $\sF_\bullet$ in $\Ob(\vect_{\para}(\mathbb{D}, \vec{r}))$,
define a functor 
\[
(\sE_\bullet\otimes \sF_\bullet)_\bullet
:\left(\frac{1}{\vec{r}}\lvZ \right)^{op}
\to \vect{X}
\]
by setting
\[
(\sE_\bullet \otimes \sF_\bullet)_{\vec{\alpha}}:=
\left(\bigoplus_{\beta+\gamma = \alpha}
\sE_{\vec{\beta}} \otimes_{\sO_{X}}\sF_{\vec{\gamma}}\right)/R_{\vec\alpha}
\]
where $R_{\vec\alpha}$ is the $\sO_{X}$ submodule of 
the direct sum, which is locally generated by the sections:
\[
[\sE_{\bullet}(\vec\beta \to \vec\beta')]x \otimes y
-
x \otimes [\sF_{\bullet}(\vec\gamma' \to \vec\gamma)]y
\]
for any $\vec\beta+\vec\gamma = \vec\beta'+\vec\gamma' = \vec\alpha$
where $x \in \sE_{\vec{\beta}}$, $y \in \sF_{\vec{\gamma}'}$ and $[\sE_{\bullet}(\vec\beta \to \vec\beta')]$ denotes the morphism in $\vect(X)$ which is the image of the morphism $\vec\beta \to \vec\beta'$ in $\left(\frac{1}{\vec{r}}\lvZ \right)^{op}$ under the functor $\sE_{\bullet}$ (similarly for $[\sF_\bullet (\vec\gamma' \to \vec\gamma)])$;  and 
\[
x- j_{i}^{\vec\beta, \vec\gamma}x
\]
for $i=1,\ldots, k,$ where $j_{i}^{\vec\beta, \vec\gamma}$ denotes the morphism 
\[
(1 \otimes j_{\sF_{\bullet}, i}(\vec\gamma))\circ
(j_{\sE_{\bullet},i}(\vec\beta - (0,\ldots, 0,1,0,\ldots, 0))^{-1}\otimes 1)
\]
mapping
\begin{eqnarray*}
\sE_{\vec{\beta}} \otimes \sF_{\vec{\gamma}}
&\to &
\sE_{(\beta_{1}, \ldots, \beta_{i-1}, \beta_{i}-1, \beta_{i+1}, \ldots, \beta_{k})}\otimes \sO(-n_{i}p_{i}) \otimes
\sF_{\vec{\gamma}}
\\
&\to&
\sE_{(\beta_{1}, \ldots, \beta_{i-1}, \beta_{i}-1, \beta_{i+1}, \ldots, \beta_{k})}\otimes
\sF_{(\gamma_{1}, \ldots, \gamma_{i-1}, \gamma_{i}+1, \gamma_{i+1}, \ldots, \gamma_{k})}.
\end{eqnarray*}
Also define
the morphism 
$
\psi_{(\sE\otimes\sF)_{\bullet}}^{\vec{\alpha}, {\vec{\alpha}}'}:=(\sE\otimes \sF)_{\bullet}({\vec{\alpha}} \to\vec{\alpha}')
$
from $(\sE\otimes \sF)_{\vec\alpha}$ to 
$(\sE\otimes \sF)_{\vec\alpha'}$ in $\vect(X)$
by specifying for local sections $x \in \sE_{\vec{\beta}}$ and $y \in \sF_{\vec{\gamma}}$ with $\vec{\beta}+\vec{\gamma} = \vec\alpha$, that
\begin{eqnarray*}
\psi_{(\sE\otimes \sF)_{\bullet}}^{\vec\alpha, \vec\alpha'}(x \otimes y \mod R_{\vec\alpha}) 
&=&
([\sE_{\bullet}(\vec\beta \to \vec\alpha'-\vec\gamma)]x)\otimes y \mod R_{\vec\alpha'}
\\
&=&
x \otimes ([\sF_{\bullet}(\vec\gamma \to \vec\alpha' - \vec\beta)]y) \mod R_{\vec\alpha'}.
\end{eqnarray*}
\end{defn}

Now for each $i$, it is possible to define the isomorphism $j_{i}$ associated to the functor 
$(\sE\otimes \sF)_{\bullet}$ as follows:
Consider for $i =1, \ldots, k$, 
\[
J_{\vec\alpha}^{i}:=\bigoplus_{\vec\gamma}(1 \otimes j_{\sF_{\bullet},i}(\vec\gamma))
\]
mapping
\[
\bigoplus_{\vec\gamma} \sE_{(\vec\alpha - \vec\gamma)} \otimes 
\sF_{\vec\gamma} \otimes \sO(-n_{i}p_{i})
\to
\bigoplus _{\vec\gamma}\sE_{(\vec\alpha - \vec\gamma)}\otimes {\sF}_{(\gamma_{1}, \ldots, \gamma_{i-1}, \gamma_{i}+1, \gamma_{i+1}, \ldots, \gamma_{k})}.
\]
Then $J_{\vec\alpha}^{i}(R_{\vec\alpha}\otimes \sO(-n_{i}p_{i})
) = R_{(\alpha_{1}, \ldots, \alpha_{i}+1, \ldots, \alpha_{k})}.$ Hence  $J_{\bullet}^{i}$ descends to the quotient and we denote this morphsim $j_{(\sE_\bullet\otimes \sF_\bullet)_{\bullet}, i}$.

\begin{lemma}
With this data, $(\sE_\bullet \otimes \sF_\bullet)_{\bullet}$ is a parabolic bundle 
with a bilinear morphism 
$$
\sE_{\bullet} \oplus \sF_{\bullet} \rightarrow (\sE_\bullet \otimes \sF_\bullet)_{\bullet} \circ +
$$
that is universal for all bilinear morphisms.
\end{lemma}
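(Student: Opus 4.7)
The plan is to verify in turn that $(\sE_\bullet\otimes\sF_\bullet)_\bullet$ is a well-defined functor into $\vect(X)$, that the $j_i$ defined by descent of $J^i_{\vec\alpha}$ together with the transition maps $\psi$ make it into a parabolic bundle, and finally that the natural bilinear morphism given by the coprojections is universal.

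First I would show that each $(\sE_\bullet\otimes\sF_\bullet)_{\vec\alpha}$ is a genuine vector bundle. Although the direct sum is indexed by the infinite set $\{(\vec\beta,\vec\gamma):\vec\beta+\vec\gamma=\vec\alpha\}$, the second family of relations $x-j_i^{\vec\beta,\vec\gamma}x$ implements the identification $\sE_{\vec\beta}\otimes\sF_{\vec\gamma}\otimes\sO(-n_ip_i)\iso\sE_{\vec\beta-e_i}\otimes\sF_{\vec\gamma+e_i}$, so we may restrict the sum to a fundamental domain in $\frac{1}{\vec r}\lvZ$ for the $\lvZ^k$-action by $(\vec\beta,\vec\gamma)\mapsto(\vec\beta+e_i,\vec\gamma-e_i)$, giving a finite sum. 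Local freeness then follows by the local splitting result (Proposition \ref{p:local}, via the orbifold--parabolic correspondence proved in section five), which reduces the assertion to the rank-one case, where it is an explicit calculation of the tensor product of two shifted line bundles.

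Second I would check that $\psi^{\vec\alpha,\vec\alpha'}_{(\sE\otimes\sF)_\bullet}$ is well defined on the quotient and functorial. Well-definedness uses both clauses in the displayed formula (which agree modulo the first family of relations in $R_{\vec\alpha'}$), compatibility with the $j_i$ relations is automatic from the naturality of $j_{\sE_\bullet,i}$ and $j_{\sF_\bullet,i}$, and functoriality in $\vec\alpha$ is inherited from $\sE_\bullet$ and $\sF_\bullet$. The calculation $J^i_{\vec\alpha}(R_{\vec\alpha}\otimes\sO(-n_ip_i))=R_{\vec\alpha+e_i}$ already recorded in the text then yields the required isomorphism $j_{(\sE_\bullet\otimes\sF_\bullet)_\bullet,i}$ and the commutativity of the triangle of Definition \ref{d:pb}.

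Third I would verify the two axioms of Definition \ref{d:pb}. Axiom (ii) is a formal diagram chase: the fibered product property for $\sE_\bullet$ and $\sF_\bullet$ passes to each summand of the direct sum and hence to the quotient, because the relations in $R_{\vec\alpha}$ are equivariant for the transition maps appearing in the axiom. Axiom (i), the local freeness of $\coker((\sE\otimes\sF)_{\vec\alpha'}\hookrightarrow(\sE\otimes\sF)_{\vec\alpha})$ over $\sO_D$, is the main technical obstacle; I would handle it locally using Proposition \ref{p:local} to write $\sE_\bullet$ and $\sF_\bullet$ as direct sums of rank-one parabolic bundles and then compute the tensor product combinatorially (weights add), where the quotients are explicitly locally free $\sO_D$-modules.

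Finally, universality of the coprojection $\sE_\bullet\oplus\sF_\bullet\rightarrow(\sE_\bullet\otimes\sF_\bullet)_\bullet\circ +$ is built into the construction. Given a bilinear $\eta:\sE_\bullet\oplus\sF_\bullet\rightarrow\sP_\bullet\circ +$, the components $\eta_{\vec\beta,\vec\gamma}:\sE_{\vec\beta}\otimes_{\sO_X}\sF_{\vec\gamma}\rightarrow\sP_{\vec\beta+\vec\gamma}$ assemble to a map out of the direct sum; naturality of $\eta$ in $\vec\beta$ and $\vec\gamma$ kills the first family of generators of $R_{\vec\alpha}$, and the hypothesis that $\eta$ induces parabolic morphisms $\sE_\bullet\to\sP[\vec\alpha]_\bullet$ (resp.\ on $\sF_\bullet$) forces compatibility with the $j_{\sE_\bullet,i}$ and $j_{\sF_\bullet,i}$, killing the second family. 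The resulting map is automatically a morphism of parabolic bundles and is unique by construction.
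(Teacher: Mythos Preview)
Your proposal is correct and considerably more explicit than the paper's proof, which simply asserts that the parabolic bundle axioms are ``easy to check'' and sketches universality by pointing to the canonical coprojections as in \cite{yokogawa:95}. The substantive methodological difference is in how you establish local freeness (both that each $(\sE\otimes\sF)_{\vec\alpha}$ is a vector bundle and that axiom (i) of Definition \ref{d:pb} holds): you invoke Proposition \ref{p:local} together with the orbifold--parabolic correspondence of section five to reduce locally to a direct sum of rank-one parabolic bundles, where everything is an explicit weight computation. The paper, by contrast, treats section four as logically prior to section five and leaves these verifications to direct Yokogawa-style arguments.

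Your route is cleaner once the correspondence is in hand, but it is a forward reference in the paper's organization. There is no actual circularity---the proof of Theorem \ref{t:correspondence} uses only the additive structure of $\vect_\para(\lvD,\vec r)$, not the tensor product---so with a note that the equivalence of categories may be established first, your argument goes through. The trade-off is that the paper's ordering keeps the tensor construction self-contained (at the price of omitting details), while your approach outsources the hardest local checks to the stack side (at the price of reordering the exposition).
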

\begin{proof}
It is easy to check that 
$((\sE_\bullet \otimes \sF_\bullet)_{\bullet}, j_{(\sE_\bullet \otimes \sF_\bullet)_{\bullet}, i}) \in \Ob(\vect_{\para}(\mathbb{D}, \vec{r})).$

To see the universal property, notice as in \cite{yokogawa:95} that the canonical maps
\[
f_{\vec\alpha, \vec\beta}: \sE_{\vec\alpha}
\otimes _{\sO_{X}} 
\sF_{\vec\beta}
\to (\sE_\bullet\otimes \sF_\bullet)_{\vec\alpha+\vec\beta}
\]
determine a canonical bilinear morphism
\[
f_{\bullet, \bullet}:
\sE_{\bullet} \oplus\sF_{\bullet}
\to
(\sE_\bullet \otimes \sF_\bullet)_{\bullet} \circ +
\]
of $\sE_\bullet$ and $\sF_\bullet$ to $(\sE_\bullet \otimes \sF_\bullet)_{\bullet}$
via morphisms $f_{\bullet, \vec\beta}:\sE_{\bullet} \to (\sE_\bullet \otimes \sF_\bullet)[\vec\beta]_{\bullet}$ and $f_{\vec\alpha, \bullet}: \sF_{\bullet} \to (\sE_\bullet \otimes \sF_\bullet)[\vec\alpha]_{\bullet}$ defined respectively for each fixed local section $b \in \sF_{\vec\beta}$ and $a \in \sE_{\vec\alpha}$. Because the latter morphisms are canonical embeddings, it follows that any bilinear morphism of $\sE_{\bullet}$ and $\sF_{\bullet}$ to some parabolic bundle $\mathscr{P}_{\bullet}$ factors uniquely through $(\sE_\bullet \otimes \sF_\bullet)_{\bullet} \circ +$.
\end{proof}

\begin{defn}
\label{def1}
Given parabolic bundles $\sE_\bullet$ and $\sF_\bullet$ in $\Ob(\vect_{\para}(\mathbb{D}, \vec{r}))$,
define a functor
\[
\mathscr{H}om(\sE_\bullet, \sF_\bullet)_{\bullet}:
\left(\frac{1}{\vec{r}}\mathbb{Z}\right)^{op}
\to
\vect(X)
\]
by setting
\[
\mathscr{H}om(\sE_\bullet, \sF_\bullet)_{\vec\alpha}:=
\mathscr{H}om(\sE_{\bullet}, \sF[\vec\alpha]_{\bullet}),
\]
the (vector bundle of) natural transformations from the functor $\sE_{\bullet}$ to the shifted functor $\sF[\vec\alpha]_{\bullet}.$
The morphism $\vec\alpha \to \vec\beta$ in $\left(\frac{1}{\vec{r}}\mathbb{Z}\right)^{op}$ induces a natural transformation of $\sF[\vec\alpha]_{\bullet}$ to $\sF[\vec\beta]_{\bullet}$
(i.e. the shift $[\vec\beta-\vec\alpha]$), thereby inducing a natural transformation 
\[
\mathscr{H}om(\sE_\bullet, \sF_\bullet)_{\vec\alpha} \rightarrow 
\mathscr{H}om(\sE_\bullet, \sF_\bullet)_{\vec\beta}
\]
which we regard as the image of $\vec\alpha \to \vec\beta$ under the functor $\mathscr{H}om(\sE_\bullet, \sF_\bullet)_{\bullet}.$
\end{defn}

\begin{lemma}
For a given $\mathbb{D}$ and $\vec{r},$
$\vect_{\para}(\mathbb{D}, \vec{r})$ with the tensor product and internal hom defined above in \ref{def0} and \ref{def1} respectively, is a rigid tensor category.
\end{lemma}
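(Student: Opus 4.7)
The plan is to verify the defining properties of a rigid tensor category one by one, largely by adapting Yokogawa's arguments \cite{yokogawa:95} for reduced parabolic divisors to the present setting with $\mathbb{D} = (n_{i}p_{i})$, and by exploiting the universal property of $\otimes$ established in the preceding lemma.

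First I would identify the tensor unit $U_{\bullet}$ as the parabolic bundle with $U_{\vec{0}} = \sO_{X}$, trivial filtration, and structure isomorphisms $j_{U_{\bullet}, i}$ given by the tautological inclusions $\sO_{X}(-n_{i}p_{i}) \hookrightarrow \sO_{X}$. A bilinear morphism $\sF_{\bullet} \oplus U_{\bullet} \to \sP_{\bullet} \circ +$ is determined by its restriction along the global section $1$ of $U_{\vec{0}} = \sO_{X}$, yielding the canonical unit isomorphism $\sF_{\bullet} \otimes U_{\bullet} \iso \sF_{\bullet}$. Associativity and commutativity then follow from the universal property: both $(\sE_{\bullet} \otimes \sF_{\bullet}) \otimes \sG_{\bullet}$ and $\sE_{\bullet} \otimes (\sF_{\bullet} \otimes \sG_{\bullet})$ co-represent the functor of trilinear morphisms on $X$, while the obvious swap on the direct sum defining $R_{\vec{\alpha}}$ provides commutativity; the Mac Lane coherence diagrams are then automatic.

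Next I would show that $\mathscr{H}om(-,-)_{\bullet}$ is right adjoint to $\otimes$, i.e.
\[
\mathrm{Hom}(\sE_{\bullet} \otimes \sF_{\bullet},\, \sP_{\bullet}) \;\cong\; \mathrm{Hom}(\sE_{\bullet},\, \mathscr{H}om(\sF_{\bullet}, \sP_{\bullet})_{\bullet}),
\]
by checking that a bilinear morphism $\sE_{\bullet} \oplus \sF_{\bullet} \to \sP_{\bullet} \circ +$ is precisely a rule assigning to each local section $e \in \sE_{\vec{\alpha}}$ a parabolic morphism $\sF_{\bullet} \to \sP[\vec{\alpha}]_{\bullet}$, which by Definition \ref{def1} is exactly a natural transformation $\sE_{\bullet} \to \mathscr{H}om(\sF_{\bullet}, \sP_{\bullet})_{\bullet}$.

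The hard part will be rigidity. I would set $\sE^{\vee}_{\bullet} := \mathscr{H}om(\sE_{\bullet}, U_{\bullet})_{\bullet}$ and obtain the evaluation $\mathrm{ev} : \sE^{\vee}_{\bullet} \otimes \sE_{\bullet} \to U_{\bullet}$ from the identity on $\sE_{\bullet}$ via the adjunction above. The main obstacle is producing a coevaluation $\mathrm{coev} : U_{\bullet} \to \sE_{\bullet} \otimes \sE^{\vee}_{\bullet}$ satisfying the triangle identities, and showing that the induced map $\sE^{\vee}_{\bullet} \otimes \sF_{\bullet} \to \mathscr{H}om(\sE_{\bullet}, \sF_{\bullet})_{\bullet}$ is an isomorphism. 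I would reduce this to a local assertion on $X$: over a small enough affine, a parabolic bundle decomposes as a direct sum of rank-one parabolic bundles of the form $\sO_{X}[\vec{\alpha}]_{\bullet}$, using that successive quotients in the parabolic filtration are locally free $\sO_{D}$-modules combined with the classical fact that vector bundles over an affine open of $X$ are free. For such rank-one parabolic bundles the dual is visibly $\sO_{X}[-\vec{\alpha}]_{\bullet}$, evaluation and coevaluation are induced by the canonical multiplication pairing $\sO_{X} \otimes \sO_{X} \iso \sO_{X}$, and the triangle identities reduce to bookkeeping with the compatibility between the shift functors and the Cartier-divisor twists $\sO_{X}(-n_{i}p_{i})$ encoded in the $j_i$.
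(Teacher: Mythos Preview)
Your proposal is correct and follows essentially the same approach as the paper: the paper's proof consists solely of the remark that one adapts Yokogawa's Lemmas 3.5 and 3.6 (equation (3.2)) in \cite{yokogawa:95} to the present definitions, and your sketch is precisely such an adaptation spelled out in detail (unit, associativity/commutativity via the universal property, the $\otimes$--$\mathscr{H}om$ adjunction, and rigidity via a local splitting into rank-one parabolic bundles). The only caveat is that your local decomposition step relies on the filtration by locally free $\sO_{D}$-modules splitting over a local ring, which does hold (free quotients are projective, hence the short exact sequences split), but you should make that explicit since $D$ is not assumed reduced here.
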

\begin{proof}
This follows from the same arguments used to prove Lemmas 3.5  
and 3.6 (equation (3.2))
in \cite{yokogawa:95}, modified to accord with our definitions.
\end{proof}

An alternative description of the tensor product was given in \cite{balaji:02}. This is useful for computations, so for later use, we formulate it here. The definition hinges on the embedding $\tau: X \setminus D \rightarrow X$:
\begin{defn}
\label{d:BBN}
The {BBN} tensor of the parabolic bundles $\sE_{\bullet}$ and $\sF_{\bullet}$ is the functor
\[
(\sE_\bullet \otimes \sF_\bullet)^{BBN}_{\bullet} : \pardom \rightarrow \vect(X)
\]
sending $\vec\alpha$ to the subsheaf of $\tau_{*}\tau^{*}(\sE_{\bullet}\otimes \sF_{\bullet})$ generated by (the canonical images of) $\sE_{\vec\beta} \otimes \sF_{\vec\gamma}$ for all $\vec\beta+\vec\gamma = \vec\alpha.$
\end{defn}
Since $\sE_\bullet$ and $\sF_\bullet$ are parabolic, the requisite axioms are automatically satisfied. To show that the {BBN} tensor gives a parabolic bundle, it remains to exhibit the isomorphisms $j_{i}$. Instead, we prove
\begin{lemma}
\label{l:tensor}
For any $\vec\alpha \in \pardom,$ and any parabolic bundles $\sE_{\bullet}$ and $\sF_{\bullet},$
\[
(\sE_\bullet\otimes \sF_\bullet)_{\vec\alpha} \simeq
(\sE_\bullet \otimes \sF_\bullet)^{BBN}_{\vec\alpha}.
\] 
\end{lemma}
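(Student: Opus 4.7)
The plan is to exhibit an explicit natural isomorphism by constructing a map $\Phi_{\vec\alpha}:(\sE_\bullet\otimes\sF_\bullet)_{\vec\alpha}\to(\sE_\bullet\otimes\sF_\bullet)^{BBN}_{\vec\alpha}$ for each $\vec\alpha$ and then verifying it is an isomorphism locally on $X$. The map is induced from the obvious morphisms $\sE_{\vec\beta}\otimes\sF_{\vec\gamma}\to\tau_{*}\tau^{*}(\sE_{\bullet}\otimes\sF_{\bullet})$ arising because $\sE_{\vec\beta}$ and $\sF_{\vec\gamma}$ are canonically submodules of the generic fibres of $\sE_{\bullet}$ and $\sF_{\bullet}$ respectively. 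The image of this map on the $\vec\beta+\vec\gamma=\vec\alpha$ summand lands inside the subsheaf generated by all such images, which is the BBN tensor, and so summing and passing to the quotient one obtains $\Phi_{\vec\alpha}$ provided the relations $R_{\vec\alpha}$ are respected.

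The first check is that $\Phi_{\vec\alpha}$ is well-defined, i.e.\ that the two families of relations generating $R_{\vec\alpha}$ vanish in $\tau_{*}\tau^{*}(\sE_{\bullet}\otimes\sF_{\bullet})$. The relation $[\sE_{\bullet}(\vec\beta\to\vec\beta')]x\otimes y-x\otimes[\sF_{\bullet}(\vec\gamma'\to\vec\gamma)]y$ is killed because upon restricting to $X\setminus D$ all parabolic transition morphisms become the identity on the common generic fibre, so the two terms coincide in $\tau_{*}\tau^{*}(\sE_{\bullet}\otimes\sF_{\bullet})$. Similarly, the relation $x-j_{i}^{\vec\beta,\vec\gamma}x$ encodes the fact that, after inverting the local parameter at $p_{i}$, moving one unit from the $\sE$-weight to the $\sF$-weight and compensating by an $\sO(-n_{i}p_{i})$ factor gives the same element of the generic fibre; this is exactly the identification used to embed each $\sE_{\vec\beta}\otimes\sF_{\vec\gamma}$ as a subsheaf of the common ambient sheaf.

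Surjectivity of $\Phi_{\vec\alpha}$ is then immediate from the definition of the BBN tensor as the subsheaf generated by the images of the $\sE_{\vec\beta}\otimes\sF_{\vec\gamma}$. For injectivity, I would work locally on $X$. Away from $D$ both constructions reduce to the ordinary tensor product $\sE_{\vec 0}\otimes\sF_{\vec 0}$ and the statement is trivial, so the question is local at each $p_{i}$. At such a point, by Proposition \ref{p:local} (applied via the orbifold/parabolic translation of Section 5, which the reader may take as an input here), the parabolic bundles $\sE_{\bullet}$ and $\sF_{\bullet}$ are finite direct sums of parabolic line bundles. Since both $(-\otimes-)_{\bullet}$ and $(-\otimes-)^{BBN}_{\bullet}$ commute with direct sums, one is reduced to the case of two parabolic line bundles.

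The main obstacle is thus to establish the line-bundle case cleanly. Here each $\sE_{\vec\beta}\otimes\sF_{\vec\gamma}$ is a locally free module of rank one, and a direct calculation shows that modulo $R_{\vec\alpha}$ the quotient collapses to a single rank-one summand, corresponding to the one parabolic line bundle one expects as tensor product; on the BBN side the subsheaf of $\tau_{*}\tau^{*}$ generated by these rank-one pieces is visibly this same parabolic line bundle. Matching the two descriptions, and in particular checking compatibility with the isomorphisms $j_{i}$ on both sides, gives the desired isomorphism. This verification, though elementary, is the step that really uses the precise form of the relations in Definition \ref{def0}, and so is where the bulk of the argument lies.
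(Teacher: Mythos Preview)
Your approach is sound but takes a genuinely different route from the paper. The paper argues directly that $R_{\vec\alpha}=\ker\phi$ by induction on the number $m$ of nonzero components of an element of $\bigoplus_{\vec\beta+\vec\gamma=\vec\alpha}\sE_{\vec\beta}\otimes\sF_{\vec\gamma}$: the case $m=2$ is handled by an explicit manipulation with pure tensors, and the inductive step uses axiom~(ii) of Definition~\ref{d:pb} to reduce to tuples with a single nonzero entry, where the filtrations are totally ordered and one can absorb the top term into the next by adding an element of $R_{\vec\alpha}$. No appeal to the orbifold correspondence is made. Your argument instead localises, invokes Proposition~\ref{p:local} together with Theorem~\ref{t:correspondence} to split $\sE_\bullet$ and $\sF_\bullet$ locally as sums of parabolic line bundles, and then checks the rank-one case by hand. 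This is logically fine---Theorem~\ref{t:correspondence} is proved without using the tensor structure, so there is no circularity---but it is a forward reference that the paper's organisation avoids. What you gain is a cleaner structural picture; what you lose is self-containment of Section~4. One caution: your ``direct calculation'' in the line-bundle case still has to show that the infinite direct sum modulo $R_{\vec\alpha}$ collapses to a single invertible sheaf, which amounts to checking that the two families of relations identify all summands with the maximal one; this is exactly the content that the paper's induction handles in the general case, so the work has not disappeared, only been repackaged.
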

\begin{proof}
Any bundle $\sE_{\vec\beta} \otimes \sF_{\vec\gamma}$ with $\vec\beta + \vec \gamma = \vec\alpha$ maps into $\tau_{*}\tau^{*}(\sE_\bullet \otimes \sF_\bullet)$, producing a mapping 
\[
\phi:
\oplus_{\vec\beta+\vec\gamma = \vec\alpha}\sE_{\vec\beta} \otimes \sF_{\vec\gamma}
\to
(\sE_\bullet \otimes \sF_\bullet)_{\alpha}^{BBN}
\]
which by construction is a surjection. It remains to show that $R_{\vec\alpha} = \ker \phi.$ Since these are sheaves, the question is local. It is then immediate from the definition of $R_{\vec\alpha}$ in terms of local sections, that this sheaf is a subsheaf of the kernel. An induction argument shows the reverse inclusion: Let $m$ denote the number of non-zero entries in a given element of the direct sum.  Also, let $(x_{\beta\gamma})_{\beta\gamma}$ denote an element of the direct sum, where $x_{\beta\gamma}$ is a local section of $\sE_{\vec\beta}\otimes \sF_{\vec\gamma}.$ 
Elements of the kernel for which $m=2$ are in $R_{\vec\alpha}$: If $(x_{\beta\gamma})_{\beta\gamma}$ is such an element, then denote the non-zero entries by
$x_{st}$ and $x_{uv}.$ Here suppose firstly that $x_{st} = x_{s}\otimes x_{t}$ and $x_{uv} = x_{u}\otimes x_v$ - i.e. each is a pure tensor of local sections. Then the image under $\phi$ is $\phi(x_{st})+\phi(x_{uv}) = 0.$ Abusing notation, this means that $x_s \otimes x_t = -x_u \otimes x_v$, which necessarily admits an expression as $\sE[u \to s](-x_u) \otimes x_t = (-x_u)\otimes \sF[t \to v]x_t$ so that $(x_{\beta\gamma}) \in R_{\vec\alpha}.$ More generally, if the non-zero terms are not pure tensors, by choosing bases for the local sections, which give canonical bases for the tensor products, it is possible to carry out a similar argument.  Now if it is known that elements of the kernel for which $m \leq n-1$ all lie in $R_{\vec\alpha},$   
the same is true for those with $m=n$.
To show this, we remark that because of axiom (ii) of Definition \ref{d:pb}, it suffices to consider $\vec\alpha$ of the form of $(0, \ldots, 0, a, 0, \ldots, 0)$ for some $a.$ Without loss of generality, we may suppose that $k=2$ - i.e. the tuples $(a,0)$ and $(0,b)$ need only be considered. Then for pure tensors as before, we obtain
$x_{s_1}\otimes x_{t_{1}} + \ldots +x_{s_{n-1}}\otimes x_{t_{n-1}} = -x_{s_{n}}\otimes x_{t_{n}}$ with $x_{s_{j}}$ (resp. $x_{t_{j}}$) a local section of $\sE_{s_{j}}$ (resp. $\sF_{t_{j}}$). But by adding suitable elements of $R_{\vec\alpha}$ to each term, when $\vec\alpha = (a,0),$ we may assume that the $s_{j}=(s_{j}',0)$ and the $t_{j}=(t_{j}',0).$ We may take $s_{1}'< \ldots <s_{n}'$, so that $t_{n}'< \ldots < t_{1}'.$ But then $\sE_{s_{1}}\supset \ldots \supset \sE_{s_{n}}$ while $\sF_{t_{1}} \subset \ldots \subset \sF_{t_{n}}.$ Consequently $x_{s_{n}} \otimes x_{t_{n}} \in \sE_{s_{1}} \otimes \sF_{t_{n-1}},$ so that $x_{s_{n}} \otimes x_{t_{n}} - \sE[s_{n}\to s_{n-1}]x_{s_{n}} \otimes x_{t_{n}} \in R_{\vec\alpha}$, and may be added to the right side to reduce to the case that $m=n-1$. The general case may be handled using local bases as before.

\end{proof}

We define a parabolic bundle $\sO_{X\bullet}:\pardom\rightarrow \vect(X)$ by setting
\[
\begin{array}[c]{cccc}
 \sO_{X\;(0,\ldots,0)} & = & \sO_X &\\
 \sO_{X\;(0,\ldots,0,t,0,\ldots,0)} & = & \sO_X(-np_i)& \mbox{for}\;t \in (0,1].
\end{array}
\]
It is easily seen that this bundle is a unit for the tensor product.

\section{The Parabolic -  Orbifold Correspondence}

Recall that ${\mathscr{N}}_{1}, \ldots, {\mathscr{N}}_{k}$ denote  the canonical line bundles on $X_{\mathbb{D}, \vec{r}}$ that 
are roots of ${\sO}(n_{i}p_{i}).$ Following \cite{biswas:97} and \cite{borne:07} we then define a functor
\[
\ef_{\mathbb{D}, \vec{r}}: \vect(X_{\mathbb{D}, \vec{r}}) \rightarrow \vect_{\para}(\mathbb{D}, \vec{r})
\]
\[
{\sF}
\mapsto 
\left[ 
\left(
\frac{l_{1}}{r_{1}}, \ldots, \frac{l_{k}}{r_{k}}
\right) \mapsto
\pi_{*}(
{\mathscr{N}}_{1}^{-l_{1}} \otimes \cdots \otimes {\mathscr{N}}_{k}^{-l_{k}}\otimes {\sF}) 
\right].
\]

\begin{remark}
This functor is in fact a tensor functor where the tensor product in the category of parabolic bundles is defined as in the last section.
In order to prove this it is useful to use the description of the tensor product in \cite{balaji:02}. Given two vector bundles 
$\sF_1$ and $\sF_2$ we need to
show that the two parabolic bundles $\ef(\sF_1\otimes \sF_2)$ and $\ef(\sF_1)\otimes\ef(\sF_2)$ are isomorphic. Away from the support of $\mathbb{D}$ the stack $X_{\mathbb{D}, \vec{r}}$ is isomorphic to the curve $X$. Hence both of these bundles are subbundles of $\tau_*\tau^*(\ef(\sF_1)\otimes \ef(\sF_2))$. We need to
show that they are the same subbundle. This question is local so we reduce to the case of one parabolic point and $\sF_i=\sN^{a_i}$. This is now
easily checked.
\end{remark}

The main result of this section is:
\begin{thm}
\label{t:correspondence}
The functor $\ef_{\mathbb{D}, \vec{r}}$ is an equivalence of categories.
\end{thm}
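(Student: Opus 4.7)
The plan is to construct an explicit quasi-inverse
$\eg_{\mathbb{D}, \vec{r}}: \vect_{\para}(\mathbb{D}, \vec{r}) \to \vect(X_{\mathbb{D}, \vec{r}})$
and verify on an affine local cover that both compositions with $\ef_{\mathbb{D}, \vec{r}}$ are canonically isomorphic to the identity. The whole argument reduces to a computation on an affine open of $X$ containing a single parabolic point $p_i$, using the local structure provided by (the proof of) Proposition \ref{p:quotroot} together with Proposition \ref{p:local}. Concretely, near $p_i$ the root stack is $[\Spec A[t]/(t^{r_i}-s^{n_i})/\mu_{r_i}]$ where $s$ is a local parameter, so a vector bundle there is a $\mathbb{Z}/r_i\mathbb{Z}$-graded $A[t]/(t^{r_i}-s^{n_i})$-module, and Proposition \ref{p:local} guarantees that such a bundle splits locally as $\bigoplus_{l=0}^{r_i-1} \sN_i^{-l} \otimes \pi^{*}V_l$ for ordinary vector bundles $V_l$ on $\Spec A$.

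To define $\eg$, I would use the Mehta-Seshadri-type description recalled in Remark \ref{r:ms}: given $\sF_\bullet$, the successive quotients in the one-parameter filtration $\sF_{\cdots,l/r_i,\cdots} \supset \sF_{\cdots,(l+1)/r_i,\cdots}$ supply precisely the graded pieces $V_l$, and this reconstitutes an object of the root stack. Axiom (ii) of Definition \ref{d:pb} guarantees that the reconstructions at distinct parabolic points are mutually compatible, so they glue to a global vector bundle. To show $\ef \circ \eg \simeq \mathrm{id}$ and $\eg \circ \ef \simeq \mathrm{id}$, one decomposes locally as above and performs a $\mu_{r_i}$-invariants computation showing that $\pi_{*}(\sN_i^{a} \otimes \pi^{*}V)$ returns the correctly twisted copy of $V$; this retrieves the filtration on the parabolic side and the graded decomposition on the stack side. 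Full faithfulness follows from the same local calculation applied to $\pi_{*}\mathscr{H}om(\sF,\sG)$ evaluated at the zero weight.

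The main obstacle is coherence between different parabolic points: a parabolic bundle is indexed by a product of rational lattices $\tfrac{1}{\vec{r}}\lvZ$ and the quasi-inverse must respect all filtration directions simultaneously. Axiom (ii) of Definition \ref{d:pb}, the fibered-product axiom, is exactly what is needed here, as it forces the $k$-dimensional poset of parabolic data to be determined by its one-dimensional restrictions, matching the fact that the graded decomposition on the root stack is independent over each $p_i$. Verifying that the natural isomorphisms $j_i$ translate correctly under the equivalence, and in particular that they are compatible with the tensoring by $\sN_i$ under pushforward, is the most delicate compatibility to check.
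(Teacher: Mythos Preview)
Your outline is workable but takes a different route from the paper. The paper, following \cite{borne:07}, defines the quasi-inverse $\eg_{\mathbb{D},\vec{r}}$ by a single global formula: the coend (``universal wedge'')
\[
\eg_{\mathbb{D},\vec{r}}(\sF_\bullet)=\int^{\frac{1}{\vec{r}}\lvZ}\sN_1^{l_1}\otimes\cdots\otimes\sN_k^{l_k}\otimes\pi^*\sF_{(l_1/r_1,\ldots,l_k/r_k)}.
\]
Locality enters only to show that this coend exists in $\vect(X_{\mathbb{D},\vec{r}})$ (Proposition~\ref{p:uniwedge}) and to verify that the natural transformations $\eg\circ\ef\to 1$ and $\ef\circ\eg\to 1$ are isomorphisms; the second of these is a short chain of identities using exactness of $\pi_*$, the projection formula, and Proposition~\ref{p:push}. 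No gluing of locally-defined objects is ever required, so the coherence issue you single out as the ``main obstacle'' simply does not arise in the paper's argument.

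Your approach, building the graded $A[t]/(t^{r_i}-s^{n_i})$-module directly from the filtration, is closer in spirit to Biswas's original treatment and can be made rigorous, but one statement needs correcting: the successive quotients $\sF_{l/r_i}/\sF_{(l+1)/r_i}$ are torsion sheaves supported on $n_ip_i$ (this is exactly axiom~(i) of Definition~\ref{d:pb}), not the vector bundles $V_l$ on $\Spec A$ in your local splitting. What those quotients record is the \emph{rank} of each $V_l$. The honest reconstruction uses the modules $\sF_{l/r_i}$ themselves, with the $t$-action assembled from the structure maps of $\sF_\bullet$ together with the isomorphism $j_i$; once the evident relations are imposed this is exactly the coend displayed above. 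So the two approaches ultimately converge, but the coend packaging buys a manifestly global, functorial construction and turns the verification of $\ef\circ\eg\simeq 1$ into a clean formal computation rather than a gluing argument.
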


The proof given below is entirely analogous with the proof given in \cite{borne:07}.

We have a canonical isomorphism
\[
\pi^{*}{\sO}^{\alpha}(n_{i}p_{i}) \rightarrow {\mathscr{N}}_{i}^{\alpha r_{i}}
\]
and a section
\[
s \in \Gamma(X_{\mathbb{D}, \vec{r}}, {\mathscr{N}}_{i}).
\]
This produces by adjointness a canonical morphism
\[
{\sO}(n_{i}p_{i})^{\lfloor l/r_{i}\rfloor} \rightarrow
\pi_{*}({\mathscr{N}}_{i}^{l}).
\]
\begin{prop}
\label{p:push}The above morphism is an isomorphism.
\end{prop}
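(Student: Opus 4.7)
The plan is to reduce to a local computation via the Euclidean algorithm and the projection formula. Set $q := \lfloor l/r_i\rfloor$ and $m := l - qr_i$, so $0 \le m < r_i$. Using the isomorphism $\sN_i^{r_i} \cong \pi^*\sO(n_ip_i)$ to write $\sN_i^l \cong \sN_i^m \otimes \pi^*\sO(qn_ip_i)$, the projection formula gives
\[
\pi_*(\sN_i^l) \;\cong\; \sO(qn_ip_i) \otimes \pi_*(\sN_i^m).
\]
Unwinding the definition of the canonical morphism --- multiplication by the $m$-th power of the section $s \in \Gamma(X_{\mathbb{D},\vec r}, \sN_i)$ to pass from $\sN_i^{qr_i}$ to $\sN_i^l$, followed by the $\pi^*\dashv\pi_*$ adjunction --- identifies the map of the proposition with $\mathrm{id}_{\sO(qn_ip_i)}$ tensored with the canonical map $\sO_X \to \pi_*(\sN_i^m)$. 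It therefore suffices to prove the proposition in the case $0 \le l < r_i$, i.e.\ $q = 0$.

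This reduced statement is local on $X$, so I restrict to an affine open $\Spec A \subset X$ containing $p_i$ but no other parabolic point. By the local presentation of the root stack from \cite{cadman:06} (as invoked in the proof of Proposition \ref{p:quotroot}), the preimage of $\Spec A$ in $X_{\mathbb{D},\vec r}$ is the quotient stack $\bigl[\Spec(A[t]/(t^{r_i} - v^{n_i}))/\mu_{r_i}\bigr]$, where $v \in A$ is a local parameter at $p_i$ and $\mu_{r_i}$ acts by $t \mapsto \zeta t$. In this presentation $\sN_i^l$ corresponds to the weight-$l$ character of $\mu_{r_i}$ acting on the free rank-one module, and so $\pi_*(\sN_i^l)$ is the weight-$l$ piece of the decomposition $A[t]/(t^{r_i} - v^{n_i}) = \bigoplus_{j=0}^{r_i-1} A\cdot t^j$, namely $A\cdot t^l$. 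Since the section $s$ corresponds to $t$, the canonical map becomes $A \to A\cdot t^l$, $a \mapsto a t^l$, which is visibly an isomorphism.

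The only non-formal step is matching the abstractly-defined canonical morphism with the explicit map $a \mapsto at^l$ in the local model; this is the main technical obstacle, and it is resolved by careful bookkeeping with the section $s$, the root-stack isomorphism $\sN_i^{r_i}\cong\pi^*\sO(n_ip_i)$, and the $\pi^*\dashv\pi_*$ adjunction. With that identification made, the result follows immediately from the direct computation of the previous paragraph.
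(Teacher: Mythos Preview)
Your argument is correct. The paper itself gives no proof here, merely citing \cite[3.11]{borne:07}; the argument there is exactly the one you have reconstructed---reduce via the projection formula and $\sN_i^{r_i}\cong\pi^*\sO(n_ip_i)$ to the range $0\le l<r_i$, then compute $\pi_*$ as $\mu_{r_i}$-invariants in the local model of the root stack. One small omission: you check the claim only on an affine neighbourhood of $p_i$, but over opens avoiding $p_i$ the section $s$ is nowhere vanishing (since $s^{r_i}$ is), so $\sN_i^l$ is canonically trivial there and the map is the identity; this completes the local verification.
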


\begin{proof} See \cite[3.11]{borne:07}. \end{proof}

To proceed we need to recall the notion of a \emph{universal wedge} in category
theory. Let $\B$ and $\C$ be categories and consider a functor $F:\B^\op\times\B\rightarrow \C$.
A \emph{wedge} of $F$ is an object $x$ of $\C$ and
a collection of morphisms $a_i:F(i,i)\rightarrow x$ which are \emph{dinatural}, that is
for every morphism $f:i\rightarrow j$ in $\B$ the following diagram commutes
\begin{center}
\beginpgfgraphicnamed{Dessin-g5}
\begin{tikzpicture}
   \node (l) at (0,2)  {$F(j,i)$} ;
   \node (t) at (4,4)  {$F(i,i)$};
   \node (b) at (4,0)  {$F(j,j)$} ;
   \node (r) at (8,2)  {$x$};
   \draw [->] (l) --node[above=5pt] {$F(f^{op},1)$} (t);
   \draw [->] (l) --node[above=2.5pt] {$F(1,f)$} (b);
   \draw [->] (t) --node[above=1pt] {$a_i$} (r);
   \draw [->] (b) --node[above=1pt] {$a_j$} (r);
\end{tikzpicture}
\endpgfgraphicnamed
\end{center}

A smallest such wedge is called a universal wedge. If it exists we will denote it by
$\int^IF(I,I)$.

\begin{prop}
\label{p:uniwedge}
Let ${\sF}_{\bullet} \in \vect_{\para}(\mathbb{D}, \vec{r}).$ The universal wedge
\[
\int^{
\frac{1}{\vec{r}}\lvZ
} 
{\mathscr{N}}_{1}^{l_{1}} \otimes \cdots \otimes {\mathscr{N}}_{k}^{l_{k}}
\otimes 
\pi^{*}{\sF}_{\left(\frac{l_{1}}{r_{1}}, \cdots ,
\frac{l_{k}}{r_{k}}\right)}
\]
exists in $\vect(X_{(\mathbb{D}, \vec{r})})$.
\end{prop}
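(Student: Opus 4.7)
My plan is to produce the coend by a local construction and glue, reducing the problem to a finite diagram via periodicity.

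First, I would observe that the category of quasi-coherent sheaves on $X_{\mathbb{D},\vec{r}}$ is cocomplete, so the coend (universal wedge) automatically exists there as some quasi-coherent sheaf $\sG$, built as a coequalizer of two maps between coproducts of terms $\sN_1^{l_1}\otimes\cdots\otimes \sN_k^{l_k}\otimes\pi^*\sF_{(l_1/r_1,\ldots,l_k/r_k)}$. The content of the proposition is therefore that $\sG$ is in fact a vector bundle on $X_{\mathbb{D},\vec{r}}$. Since being a vector bundle is a local property, this reduces to a local question.

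Second, I would exploit periodicity. Combining the canonical isomorphism $\sN_i^{r_i}\cong \pi^*\sO(n_ip_i)$ with the parabolic shift isomorphism $j_{\sF_\bullet,i}:\sF_\bullet\otimes\sO(-n_ip_i)\iso\sF_\bullet[0,\ldots,1,\ldots,0]$ yields, for every $l\in \frac{1}{\vec{r}}\lvZ$ with $l_i\geq r_i$, a dinatural identification $F(l,l)\simeq F(l-r_i\mathbf{e}_i,l-r_i\mathbf{e}_i)$. Together with the analogous identifications for $l_i<0$, this forces any dinatural cocone out of $F$ to factor through the finite full subdiagram indexed by the fundamental domain $\prod_{i=1}^{k}\{0,1,\ldots,r_i-1\}$. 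Thus the coend is the coend of a diagram with finitely many objects.

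Third, to carry out the local computation, I would restrict to an affine $\Spec R\subset X$ with $R$ local, in which case by Proposition \ref{p:local} every vector bundle on the root stack is a direct sum of line bundles $\sN_1^{l_1}\otimes\cdots\otimes\sN_k^{l_k}$ with $0\leq l_i<r_i$. Using Proposition \ref{p:push} to control the pushforwards $\pi_*\sN_i^{l}$, one can exhibit the coend explicitly as the coherent sheaf
\[
\bigoplus_{0\leq l_i<r_i}\sN_1^{l_1}\otimes\cdots\otimes\sN_k^{l_k}\otimes\pi^*\sF_{(l_1/r_1,\ldots,l_k/r_k)}
\]
modulo the relations encoding the two kinds of dinatural morphisms: inclusions $\sF_{\vec\alpha}\hookrightarrow\sF_{\vec\beta}$ (for $\vec\beta\leq\vec\alpha$) and multiplication by the canonical sections of the $\sN_i$, plus the boundary periodicity relation identifying $l_i=r_i$ with $l_i=0$.

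The main obstacle is showing that this local coend is actually locally free of the expected rank, rather than containing unwanted torsion. The crucial input here is axiom (i) of Definition \ref{d:pb}: successive quotients $\sF_{\vec\alpha}/\sF_{\vec\alpha'}$ are locally free over $\sO_D$, which is exactly what guarantees that the relations cut out a locally free module. Once this is established locally, canonicity of the coend construction ensures that the local pieces glue to a vector bundle on $X_{\mathbb{D},\vec{r}}$, yielding the desired universal wedge; the argument follows the scheme of the analogous result in \cite[\S3]{borne:07}, specialised to our multi-index parabolic setup.
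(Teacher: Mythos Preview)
Your proposal is correct and follows essentially the same approach as the paper: reduce to a local question (since coends are colimits and local freeness is local) and then invoke the local argument already carried out in \cite{borne:07}. The paper's own proof is two lines to exactly this effect; you have simply unpacked more of the details of Borne's local computation, including the periodicity reduction to a finite diagram and the role of axiom (i) in guaranteeing local freeness.
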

\begin{proof}
The question is local as wedges are colimits. The proof in the local case is already in \cite{borne:07}. 
\end{proof}

We denote the functor arising from \ref{p:uniwedge} by $\eg_{\mathbb{D}, \vec{r}}.$

\begin{prop} Let $\sF \in \vect(X_{\lvD, \vec{r}}).$
The natural map
\[
{\mathscr{N}}^{l_{1}}_{1} \otimes \cdots \otimes {\mathscr{N}}_{k}^{l_{k}} \otimes
\pi^{*}\pi_{*}({\mathscr{N}}^{-l_{1}}_{1} \otimes \cdots \otimes {\mathscr{N}}_{k}^{-l_{k}} \otimes {\sF}) \rightarrow {\sF}
\]
is dinatural in $(l_{1}, \ldots, l_{k}).$
\end{prop}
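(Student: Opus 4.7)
The plan is to set up a bifunctor $F:B^{\op}\times B\to\vect(X_{\lvD,\vec{r}})$, where $B=\frac{1}{\vec{r}}\lvZ$, whose restriction to the diagonal is the integrand of Proposition \ref{p:uniwedge}, and then to verify the dinatural hexagon for the given family of maps. Concretely I take
\[
F(\vec{m},\vec{l})=\sN_1^{l_1}\otimes\cdots\otimes \sN_k^{l_k}\otimes \pi^{\ast}\pi_{\ast}(\sN_1^{-m_1}\otimes\cdots\otimes \sN_k^{-m_k}\otimes\sF).
\]
The variance in each slot comes from multiplication by the canonical sections $s_i\in\Gamma(X_{\lvD,\vec{r}},\sN_i)$: covariantly in $\vec{l}$ on the outer factors $\sN_i^{l_i}$ (for $l_i\le l'_i$, the morphism $\sN_i^{l_i}\to\sN_i^{l'_i}$ is multiplication by $s_i^{l'_i-l_i}$), and covariantly in $\vec{m}\in B^{\op}$ on the inner $\pi^{\ast}\pi_{\ast}$-factor (using functoriality of $\pi_{\ast}$ and $\pi^{\ast}$ applied to the morphism $\sN_1^{-m'_1}\otimes\cdots\otimes\sN_k^{-m'_k}\otimes\sF\to\sN_1^{-m_1}\otimes\cdots\otimes\sN_k^{-m_k}\otimes\sF$ induced by $\prod_i s_i^{m'_i-m_i}$ when $\vec{m}\le\vec{m'}$). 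The map $a_{\vec{l}}:F(\vec{l},\vec{l})\to\sF$ is then the composition of the adjunction counit $\epsilon:\pi^{\ast}\pi_{\ast}\to\mathrm{id}$ with the canonical evaluations $\sN_i^{l_i}\otimes\sN_i^{-l_i}\to\sO$.

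The heart of the argument is the dinatural hexagon. Fix $\vec{l}\le\vec{l'}$ in $B$ and let $f:\vec{l}\to\vec{l'}$ be the corresponding morphism. I must show that the two composites
\[
F(\vec{l'},\vec{l})\xrightarrow{F(f^{\op},1)} F(\vec{l},\vec{l})\xrightarrow{a_{\vec{l}}}\sF\quad\text{and}\quad F(\vec{l'},\vec{l})\xrightarrow{F(1,f)} F(\vec{l'},\vec{l'})\xrightarrow{a_{\vec{l'}}}\sF
\]
agree. Unwinding, both composites take a local section $n\otimes t$ of $F(\vec{l'},\vec{l})$, apply the counit to move $t$ out of $\pi^{\ast}\pi_{\ast}$, and then insert the section $\prod_i s_i^{l'_i-l_i}$ into the evaluation pairing — either by raising the outer $\sN_i^{l_i}$-factors to $\sN_i^{l'_i}$, or by raising the inner $\sN_i^{-l'_i}$-factors to $\sN_i^{-l_i}$ before applying the counit. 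Agreement of the two orderings then reduces to two elementary facts: the naturality of $\epsilon$ applied to the sheaf morphism induced by multiplication by $\prod_i s_i^{l'_i-l_i}$, and the commutativity of multiplication of global sections of line bundles, which identifies the two resulting evaluation pairings $\sN_i^{l_i}\otimes\sN_i^{-l'_i}\to\sN_i^{-(l'_i-l_i)}\xrightarrow{s_i^{l'_i-l_i}}\sO$.

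The only real obstacle is the bookkeeping, that is, keeping track of which section sits in which tensor factor throughout the argument. Since the check is essentially formal, if one wishes to avoid this one can reduce to a local verification via Proposition \ref{p:local}: in any such chart each $\sN_i$ is trivialized by $s_i$, and the entire hexagon collapses to an obvious identity among local sections.
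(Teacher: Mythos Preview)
Your proposal is correct and takes essentially the same approach as the paper: you identify the map as coming from tensoring the counit $\epsilon:\pi^{*}\pi_{*}\to\mathrm{id}$, and then verify the dinatural hexagon by reducing it to naturality of $\epsilon$ together with associativity/commutativity of multiplication by the canonical sections $s_i$. The paper records the counit description and declares the remaining verification ``relatively straightforward,'' deferring the details to \cite[Lemma 3.18]{borne:07}; you have simply written out those details.
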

\begin{proof}
 The morphism in question comes by tensoring the counit of adjunction
\[
 \pi^*\pi_*(\sN_1^{-l_1}\otimes \ldots \otimes \sN_k^{-l_k} \otimes \sF) \rightarrow 
  \sN_1^{-l_1}\otimes \ldots \otimes \sN_k^{-l_k} \otimes \sF.
\]
It is relatively straightforward to show that the resulting morphism is dinatural. The details are
spelled out in \cite[Lemma 3.18]{borne:07}.
\end{proof}

\begin{cor}
\[
\eg_{\mathbb{D}, \vec{r}}\circ \ef_{\mathbb{D}, \vec{r}}\simeq 1.
\]
\end{cor}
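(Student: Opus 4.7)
The previous proposition exhibits a dinatural family of morphisms
\[
\sN_1^{l_1} \otimes \cdots \otimes \sN_k^{l_k} \otimes \pi^*\pi_*\bigl(\sN_1^{-l_1} \otimes \cdots \otimes \sN_k^{-l_k} \otimes \sF\bigr) \rightarrow \sF,
\]
which endows $\sF$ with the structure of a wedge of the bifunctor defining $\eg_{\mathbb{D}, \vec{r}}\ef_{\mathbb{D}, \vec{r}}(\sF)$. The universal property of $\eg_{\mathbb{D}, \vec{r}}\ef_{\mathbb{D}, \vec{r}}(\sF) = \int^{\vec{l}} \sN_1^{l_1} \otimes \cdots \otimes \sN_k^{l_k} \otimes \pi^*\ef_{\mathbb{D},\vec{r}}(\sF)_{(l_1/r_1, \ldots, l_k/r_k)}$ therefore yields a canonical morphism
\[
\eta_{\sF}:\eg_{\mathbb{D}, \vec{r}}\ef_{\mathbb{D}, \vec{r}}(\sF) \rightarrow \sF,
\]
and the content of the corollary is that $\eta_\sF$ is an isomorphism, naturally in $\sF$. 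The plan is to check this locally on the base $X$.

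Since pushforward, pullback, tensor product, and colimits (hence wedges) all commute with finite direct sums, it suffices by Proposition \ref{p:local} to treat the case where $\sF$ is a single tensor product $\sN_1^{a_1} \otimes \cdots \otimes \sN_k^{a_k}$ over a local base. Using the projection formula together with Proposition \ref{p:push}, one finds
\[
\pi_*\bigl(\sN_1^{a_1-l_1} \otimes \cdots \otimes \sN_k^{a_k-l_k}\bigr) \cong \bigotimes_{i=1}^k \sO(n_ip_i)^{\otimes \lfloor (a_i - l_i)/r_i \rfloor},
\]
so that the $\vec{l}$-th entry of the dinatural family above specializes to a morphism
\[
\bigotimes_{i} \sN_i^{\,a_i - ((a_i - l_i)\,\mathrm{mod}\,r_i)} \longrightarrow \bigotimes_{i} \sN_i^{a_i},
\]
given by tensor powers of the tautological section $s_i \in \Gamma(X_{\mathbb{D}, \vec{r}}, \sN_i)$.

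The last step is to check that the universal wedge of this diagram is indeed $\bigotimes_i \sN_i^{a_i}$. The dinaturality relations provided by the morphisms $\vec{l} \to \vec{l}\,'$ in $(\frac{1}{\vec{r}}\mathbb{Z})^{\op}$ identify any term with exponent shifted by $r_i$ in the $i$-th factor (via multiplication by $s_i^{r_i}$), so as $\vec{l}$ varies the colimit collapses coordinate-by-coordinate onto the maximum exponent $a_i$, producing exactly $\sF$. Verifying $\eta_\sF$ is an isomorphism then reduces to checking that this explicit description matches the universal map constructed by the wedge property, which is a direct calculation. The bookkeeping across the multiple parabolic points is the one subtle place, but the single-point calculation is exactly the one given in \cite[Section 3]{borne:07}, and nothing but additivity and the projection formula is needed to reduce to it; the naturality of $\eta$ in $\sF$ is immediate from the construction.
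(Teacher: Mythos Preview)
Your argument is correct and follows essentially the same route as the paper's own proof: use the dinaturality established in the preceding proposition to produce a natural transformation $\eg_{\mathbb{D},\vec{r}}\circ\ef_{\mathbb{D},\vec{r}}\to 1$, then check locally that it is an isomorphism, ultimately deferring the single-point computation to \cite{borne:07}. You simply spell out more of the intermediate reduction (to powers of the $\sN_i$, via Proposition~\ref{p:local} and Proposition~\ref{p:push}) than the paper does; note that working locally on $X$ one may in fact assume there is only a single parabolic point, so the tensor over all $i$ is a harmless redundancy.
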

\begin{proof}
By the proposition, there exists a natural transformation
\[
\eg_{\lvD, \vec{r}} \circ \ef_{\lvD, \vec{r}}\rightarrow 1.
\]
To show that it is an isomorphism we may argue locally. This argument can be 
found in \cite[page 18]{borne:07}.
\end{proof}

Finally we need to show that 
\[
\ef_{\mathbb{D}, \vec{r}} \circ \eg_{\mathbb{D}, \vec{r}}\simeq 1.
\]

We have 
\begin{eqnarray*}
&&
\pi_{*}\left({\mathscr{N}}_{1}^{-m_{1}}\otimes \cdots \otimes {\mathscr{N}}_{k}^{-m_{k}}\otimes \int {\mathscr{N}}_{1}^{l_{1}}\otimes \cdots \otimes {\mathscr{N}}_{k}^{l_{k}} \otimes \pi^{*}{\sF}_{\left(\frac{l_{1}}{r_{1}}, \ldots, \frac{l_{k}}{r_{k}}\right)}\right)
\\
&\simeq&
\pi_{*}\left(\int {\mathscr{N}}_{1}^{l_{1}-m_{1}}\otimes \cdots \otimes {\mathscr{N}}_{k}^{l_{k}-m_{k}}\otimes \pi^{*}{\sF}_{\left(\frac{l_{1}}{r_{1}}, \ldots, \frac{l_{k}}{r_{k}}\right)}\right)\\
&\simeq &
\int \pi_{*}\left({\mathscr{N}}_{1}^{l_{1}-m_{1}}\otimes \cdots \otimes {\mathscr{N}}_{k}^{l_{k}-m_{k}}\otimes \pi^{*}{\sF}_{\left(\frac{l_{1}}{r_{1}}, \ldots ,\frac{l_{k}}{r_{k}}\right)}\right) \; \; \mbox{$\pi_{*}$ is exact}
\\
&\simeq & \int \pi_{*}({\mathscr{N}}_{1}^{l_{1}-m_{1}}\otimes \cdots \otimes {\mathscr{N}}_{k}^{l_{k}-m_{k}}) \otimes
{\sF}_{\left(\frac{l_{1}}{r_{1}}, \ldots , \frac{l_{k}}{r_{k}}\right)} \;\;\mbox{projection formula}
\\
&\simeq &
\int {\sO}(n_{1}p_{1})^{\lfloor \frac{l_{1}-m_{1}}{r_{1}}\rfloor}
\otimes
\cdots
{\sO}(n_{k}p_{k})^{\lfloor\frac{l_{k}-m_{k}}{r_{k}}\rfloor}\otimes {\sF}_{\left(\frac{l_{1}}{r_{1}}, \ldots ,\frac{l_{k}}{r_{k}}\right)}
\\
&\simeq &
\int {\sF}_{\left(\frac{l_{1}}{r_{1}}-\lfloor \frac{l_{1}-m_{1}}{r_{1}}\rfloor, \ldots, \frac{l_{k}}{r_{k}}-\lfloor \frac{l_{k}-m_{k}}{r_{k}}\rfloor\right)}
\\
&\simeq &
{\sF}_{\left(\frac{m_{1}}{r_{1}},  \ldots , \frac{m_{k}}{r_{k}}\right)}.
\end{eqnarray*}

\section{The Parabolic Pullback}
\label{s:pp}

Consider a morphism $f:Y\rightarrow X$ of smooth projective curves. We obtain a diagram

\begin{center}
\beginpgfgraphicnamed{Dessin-g6}
\begin{tikzpicture}
\node (bl) at (0,0) {$Y$};
\node (br) at (2,0) {$X$};
\node (tl) at (0,2) {$Y_{f^*\lvD,\vec{r}}$};
\node (tr) at (2,2) {$X_{\lvD,\vec{r}}$};
\draw[->] (bl) --node[above =1pt] {$f$} (br);
\draw[->] (tl) --node[left=1pt] {$\pi_Y$} (bl);
\draw[->] (tl) --node[above=1pt] {$g$} (tr) ;
\draw[->] (tr) --node[right=1pt] {$\pi_X$} (br);
\end{tikzpicture}
\endpgfgraphicnamed
\end{center}

There are associated equivalences of categories
$$
\ef^X_{\lvD,\vec{r}} : \vect(X_{\lvD,\vec{r}}) \rightarrow \vect_\para(\lvD,\vec{r})
$$
and
$$
\ef^Y_{\lvD,\vec{r}} : \vect(Y_{\lvD,\vec{r}}) \rightarrow \vect_\para(\lvD,\vec{r}).
$$
Further there is an obvious pullback functor
$$
f^*:\vect_\para(\lvD,\vec{r}) \rightarrow \vect_\para(f^*\lvD,\vec{r}).
$$

\begin{prop}
\label{p:easypullback}
We have $f^*\circ \ef^X_{\lvD,\vec{r}} = \ef^Y_{f^*\lvD,\vec{r}} \circ g^*$.
\end{prop}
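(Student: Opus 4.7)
The plan is to prove the equality of functors levelwise, by evaluating both sides at an arbitrary weight $(l_1/r_1,\ldots,l_k/r_k)$ and assembling the resulting isomorphisms into an isomorphism of parabolic bundles. By definition, $f^* \ef^X_{\lvD,\vec{r}}(\sF)$ evaluated at this weight is $f^*\pi_{X*}(\sN_1^{-l_1}\otimes\cdots\otimes\sN_k^{-l_k}\otimes\sF)$, while $\ef^Y_{f^*\lvD,\vec{r}}(g^*\sF)$ evaluated at the same weight is $\pi_{Y*}((\sN_1')^{-l_1}\otimes\cdots\otimes(\sN_k')^{-l_k}\otimes g^*\sF)$, where $\sN_i$ and $\sN_i'$ denote the canonical root line bundles on $X_{\lvD,\vec{r}}$ and $Y_{f^*\lvD,\vec{r}}$ respectively. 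Matching these requires two ingredients: flat base change to move $f^*$ past $\pi_{X*}$, and the compatibility $g^*\sN_i \simeq \sN_i'$.

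First, I would verify that the square in the statement is $2$-Cartesian, i.e., $Y_{f^*\lvD,\vec{r}} \simeq Y\times_X X_{\lvD,\vec{r}}$. Unwinding the definition from Section \ref{RS}, the morphism $Y \to [\lvA^k/\lvG_m^k]$ classifying the tuple $f^*\lvD = (n_1 f^*p_1,\ldots,n_k f^*p_k)$ factors through $f: Y \to X$ followed by the morphism $X \to [\lvA^k/\lvG_m^k]$ classifying $\lvD$. Taking the fiber product with $\theta_{\vec r}$ then exhibits $Y_{f^*\lvD,\vec{r}}$ as the claimed pullback.

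Next, since $f$ is a non-constant morphism of smooth curves it is flat, and hence the base change $g$ is flat as well. Flat base change for the representable proper morphism $\pi_X$ yields a natural isomorphism $f^*\pi_{X*}(\sE) \iso \pi_{Y*}g^*(\sE)$ for any $\sE \in \vect(X_{\lvD,\vec{r}})$. Applied to $\sE = \sN_1^{-l_1}\otimes\cdots\otimes\sN_k^{-l_k}\otimes\sF$ this transforms the left-hand side into $\pi_{Y*}(g^*\sN_1^{-l_1}\otimes\cdots\otimes g^*\sN_k^{-l_k}\otimes g^*\sF)$. It then suffices to identify $g^*\sN_i$ with $\sN_i'$: both are $r_i$-th roots of $\pi_Y^*\sO(n_if^*p_i)$ with matching tautological sections (the pullback under $g$ of the tautological section of $\sN_i$ on $X_{\lvD,\vec{r}}$ is precisely the tautological section of $\sN_i'$ under the Cartesian identification), and by the uniqueness statement of Proposition \ref{p:local} applied locally, these roots coincide.

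Finally I would verify naturality in $\vec l$ (so that the isomorphisms assemble into a morphism of functors from $\pardom$ to $\vect(Y)$) and compatibility with the structural isomorphisms $j_i$. The latter is straightforward: both $j_i$'s arise from the canonical sections of the respective root bundles via the construction preceding Proposition \ref{p:push}, and these sections pull back correctly under $g$. The only step requiring genuine verification is the flat base change, and the main conceptual point is the Cartesian identification of the two root stacks; once these are in hand, the rest is bookkeeping.
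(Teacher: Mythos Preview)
Your argument is correct and is precisely an unpacking of the paper's one-line proof, which reads in its entirety ``This is by flat base change.'' The Cartesian identification of $Y_{f^*\lvD,\vec{r}}$ with $Y\times_X X_{\lvD,\vec{r}}$ already forces $g^*\sN_i\simeq\sN_i'$ tautologically (the root data is part of the universal object), so your appeal to Proposition~\ref{p:local} is not needed, but the rest is exactly the content the paper is gesturing at.
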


\begin{proof}
This is by flat base change.
\end{proof}

We will frequently apply the correspondence described in \ref{r:ms}, in what follows.

Set $\vec{r} = (r_{1}, \ldots, r_{k})$, $\lvD=(n_{1}p_{1}, \ldots, n_{k}p_{k})$ and $\vec{n}=(n_{1}, \ldots, n_{k})$. Consider an $\vec{r}$-divisible parabolic bundle $\sF_{\bullet}$ with parabolic structure along $\lvD.$ Using \ref{r:ms} we have a filtration
\[
F_{i,1} \supset \ldots \supset F_{i,m_{i}} \supset F_{i,m_{i+1}}=0
\]
and weights 
\[
0 \leq \alpha_{i,1}=\frac{s_{i1}}{r_{i}} <
\ldots < \alpha_{i,m_{i}}=\frac{s_{im_{i}}}{r_{i}} <1.
\]
Write $n_{i}s_{ij}=a_{ij}r_{i}+e_{ij}$ with $0 \leq e_{ij} <r_{i}.$ 
We also denote by $\sF_{ij}$ the preimage of $F_{ij}$ in 
$\sF_{(0,0,\ldots, 0)}$.
For $x \in \frac{1}{r_{i}}\lvZ\cap [0,1)$ define a subsheaf
$W_{ij}^{x}(\sF_{\bullet})$ of $\sF_{(0, \ldots, 0)}(n_{i}p_{i})$ by
\[
W_{ij}^{x}(\sF_{\bullet})=
\left\{
\begin{matrix}
\sF_{(0, \ldots, 0)}(a_{ij}p_{i})+\sF_{i,j+1}(n_{i}p_{i})
&
\mbox{if}\;\;x \leq \frac{e_{ij}}{r_{i}}
\\
\sF_{(0, \ldots, 0)}((a_{ij}-1)p_{i})+\sF_{i,j+1}(n_{i}p_{i}) &
\mbox{otherwise}
\end{matrix}
\right.
\]
We have a subsheaf 
$$
\sF^x_i = \bigcap_j W_{ij}^x(\sF_{\bullet})
$$
of $\sF_{(0, \ldots, 0)}(n_{i}p_{i}).$

When $x\ge 0$, we construct subsheaves 
$\sqrt[\vec{n}]{\sF_{\bullet}}_{(0, \ldots, 0, x, 0, \ldots, 0)}$
of $$\sF_{(0,\ldots,0)}(n_1p_{1}+\ldots + n_kp_k)$$ by setting
\[
\sqrt[\vec{n}]{\sF_{\bullet}}_{(0, \ldots, 0, x, 0, \ldots, 0)}
=(\cap_{j} W_{ij}^{x}(\sF_{\bullet})) + \sum_{i\ne k} \sF^0_k
=
\sF^x_i +\sum_{i\ne k}\sF^0_k,
\]
where the non-zero entry of the tuple is at the $i$th position. If $a_{i(j+1)} = a_{ij}$ then $e_{i,j+1} > e_{ij}$. Hence we have that $x \leq y$ implies 
\[
\sqrt[\vec{n}]{\sF_{\bullet}}_{(0, \ldots, 0, x,0,\ldots, 0)}
\supseteq 
\sqrt[\vec{n}]{\sF_{\bullet}}_{(0, \ldots, 0, y, 0, \ldots, 0)}.
\]

This extends to a uniquely to a parabolic bundle 
\[
\sqrt[\vec{n}]{\sF_{\bullet}}_{\bullet}:\left(\frac{1}{\vec{r}}\lvZ\right)^{\op} \rightarrow 
\vect(X).
\]
Setting $\frac{\vec{r}}{\vec{d}} = \left(\frac{r_{1}}{d_{1}}, \ldots, \frac{r_{k}}{d_{k}}\right)
$
where $d_{i} = \gcd (r_{i}, n_{i})$, note that this parabolic bundle is really $\frac{\vec{r}}{\vec{d}}$-divisible!

Set $\lvD_{red} = (p_{1}, \ldots, p_{k}).$ We have a diagram
\begin{center}
\beginpgfgraphicnamed{Dessin-g7}
\begin{tikzpicture}
 \node (tl) at (0,2) {$X_{(\mathbb{D}_{red},\frac{\vec{r}}{\vec{d}})}$} ;
 \node (tr) at (4,2) {$X_{(\mathbb{D},\vec{r})}$} ;
 \node (b) at (2,0) {$X.$};
 \draw [>=latex,->] (tl) --node[below=1pt] {$\pi$} (b);
 \draw [>=latex,->] (tl) --node[above=1pt] {$\alpha$} (tr);
 \draw [>=latex,->] (tr) --node[right=1pt] {$\pi_{n}$} (b);
\end{tikzpicture}
\endpgfgraphicnamed
\end{center}

There are associated equivalences 
\begin{center}
\beginpgfgraphicnamed{Dessin-g8}
\begin{tikzpicture}
 \node (l) at (0,0) {$\ef: \vect(X_{\lvD_{\rm red}, \vec{r}/\vec{d}})$};
 \node (r) at (4.2,0) {$\vect_{\para}(\lvD_{\rm red}, \vec{r}/\vec{d}):\eg$};
 \draw [->] (1.5,.1) -- (2.5,.1);
 \draw [->] (2.5,-.1) -- (1.5,-.1);
\end{tikzpicture}
\endpgfgraphicnamed
\end{center}
and
\begin{center}
\beginpgfgraphicnamed{Dessin-g9}
\begin{tikzpicture}
 \node (l) at (0,0) {$\ef_n: \vect(X_{\lvD, \vec{r}})$};
 \node (r) at (4.2,0) {$\vect_{\para}(\lvD, \vec{r}):\eg_n.$};
 \draw [->] (1.5,.1) -- (2.5,.1);
 \draw [->] (2.5,-.1) -- (1.5,-.1);
\end{tikzpicture}
\endpgfgraphicnamed
\end{center}

In the remainder of this section will be devoted to proving that for a vector bundle $\sF$ on 
$X_(\lvD, \vec{r})$ we have 
$$
\sqrt[\vec{n}]{\ef_n(\sF)}\cong \ef(\alpha^*(\sF)).
$$ 

In order to motivate the proof and understand the definition above we compute some examples.

\begin{example}

We assume that there is only one parabolic point $p$ with parabolic divisor $np$ having $r$-divisable 
weights. Also set $d= \gcd(r,n)$. Consider the root line bundle $\sN^w$ with $0<w<r$
on $X_{np,r}$. A calculation
shows that 
\begin{eqnarray*}
\ef_n(\sN^w) & : &  \frac{l}{r} \mapsto \sO(np)^{\lfloor \frac{w-l}{r} \rfloor} \\
\ef(\alpha^*\sN^w) & : &  \frac{dl}{r} \mapsto \sO(p)^{\lfloor \frac{nw-dl}{r} \rfloor}. 
\end{eqnarray*}
Let's compute $\sqrt[n]{\ef_n(\sN^w)}$.  Write $wn = ar + e$. The filtration of 
$\ef_n(\sN^w)_0$ is given by
$$
\sF_1 = \sO \qquad \sF_2 = \sO(-np)
$$ 
and the weight of $\sF_1$ is $w/r$. So
$$
W_1^x = \left\{ 
\begin{array}{cc}
 \sO(ap)      & 0\le x \le e/r \\
 \sO((a-1)p)  & e/r < x < 1.
\end{array}
\right.
$$
Hence 
$$
(\sqrt[n]{\ef_n(\sN^w)})_x = \left\{ 
\begin{array}{cc}
 \sO(ap)      & 0\le x \le e/r \\
 \sO((a-1)p)  & e/r < x < 1.
\end{array}
\right.
$$
which agrees with $\ef(\alpha^*\sN^w)$.

Let us compute a rank two example. Consider the bundle 
$$
\sN^{w_1}\oplus \sN^{w_2}
$$ with $0<w_1<w_2<r$. A calculation
shows that 
\begin{eqnarray*}
\ef_n(\sN^{w_1}\oplus \sN^{w_2}) & : &  
	\frac{l}{r} \mapsto \sO(np)^{\lfloor \frac{w_1-l}{r} \rfloor} \oplus 
						\sO(np)^{\lfloor \frac{w_2-l}{r} \rfloor} \\
\ef(\alpha^*(\sN^{w_1}\oplus \sN^{w_2})) & : &  
	\frac{dl}{r} \mapsto \sO(p)^{\lfloor \frac{nw_1-dl}{r} \rfloor} \oplus 
						\sO(np)^{\lfloor \frac{nw_2-dl}{r} \rfloor} \\
\end{eqnarray*}
Let's compute $\sqrt[n]{\ef_n(\sN^{w_1}\oplus \sN^{w_2})}$.  Write $w_jn = a_jr + e_j$. The filtration of 
$\ef_n(\sN^w)_0$ is given by
\begin{eqnarray*}
\sF_1 & = & \sO\oplus \sO \\
\sF_2 & = & \sO(-np)\oplus \sO \\
\sF_3 & = & \sO(-np)\oplus \sO(-np) \\
\end{eqnarray*}
and the weight of $\sF_j$ is $w_j/r$ when $j=1,2$. So
$$
W_1^x = \left\{ 
\begin{array}{cc}
 \sO(a_1 p)    \oplus \sO(np)      & 0\le x \le e_1/r \\
 \sO((a_1-1)p) \oplus \sO(np)      & e_1/r < x < 1.
\end{array}
\right.
$$
and
$$
W_2^x = \left\{ 
\begin{array}{cc}
 \sO(a_2 p)    \oplus \sO(a_2 p)      & 0\le x \le e_2/r \\
 \sO((a_2-1)p) \oplus \sO((a_2-1)p)      & e_2/r < x < 1.
\end{array}
\right.
$$
Notice that $a_1\le a_2$ and equality implies $e_1<e_2$.
So we see that $\sqrt[n]{\ef\alpha^*(\sN^{w_1}\oplus \sN^{w_2})}$ 
agrees with $\ef(\alpha^*\sN^w)$.

\end{example}

\begin{prop}
Let $\sF$ be a vector bundle on $X_{\lvD, \vec{r}}$. Then there is a canonical inclusion
\[
\pi_{*}\alpha^{*}\sF \subset \pi_{n*}\sF(n_{1}p_{1}+\ldots +n_{k}p_{k}) 
\]
\end{prop}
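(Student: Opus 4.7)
The plan is to realise both sheaves as subsheaves of a common ambient sheaf on $X$ and then verify the containment locally around each parabolic point, using Propositions \ref{p:local}, \ref{p:push}, and the explicit description of $\alpha$ from the proof of Proposition \ref{p:reduct}.

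Let $D = p_1+\ldots+p_k$, and let $\tau : X\setminus D \hookrightarrow X$ denote the open immersion. Both root stacks are canonically isomorphic to $X\setminus D$ over this open, and $\alpha$ restricts to the identity there, so $\pi_*\alpha^*\sF$ and $\pi_{n*}\sF(n_1p_1+\ldots+n_kp_k)$ both restrict on $X\setminus D$ to $\pi_{n*}\sF|_{X\setminus D}$. Since both are locally free of rank $\rk\sF$ on $X$ (check: applying Proposition \ref{p:push} and the projection formula in the local decompositions below), each is canonically a subsheaf of $\tau_*\tau^*\pi_{n*}\sF$. It therefore suffices to show that, as such subsheaves, $\pi_*\alpha^*\sF \subseteq \pi_{n*}\sF(n_1p_1+\ldots+n_kp_k)$, which is a local question at each $p_i$.

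Fix $i$ and work in a small affine neighbourhood $U$ of $p_i$ containing no other $p_j$. By Proposition \ref{p:local}, $\sF$ restricts on $U_{\lvD,\vec{r}}$ to $\bigoplus_j \sN_i^{w_j}$ with $0\le w_j < r_i$. The explicit construction of $\alpha$ in the proof of Proposition \ref{p:reduct} yields $\alpha^*\sN_i \cong (\sN_i')^{n_i/d_i}$, where $\sN_i'$ denotes the canonical $(r_i/d_i)$-th root of $\pi^*\sO_X(p_i)$ on $U_{\lvD_{red},\vec{r}/\vec{d}}$. The projection formula (using $\sN_i^{r_i} \cong \pi_n^*\sO_X(n_ip_i)$) combined with Proposition \ref{p:push} applied to each stack then gives
\[
\pi_{n*}\sF(n_ip_i) \cong \bigoplus_j \sO_X(n_ip_i),\qquad \pi_*\alpha^*\sF \cong \bigoplus_j \sO_X\bigl(\lfloor n_iw_j/r_i\rfloor\, p_i\bigr).
\]
Since $0 \le w_j < r_i$ forces $0 \le \lfloor n_iw_j/r_i\rfloor < n_i$, there is an obvious summand-wise inclusion, and because both decompositions are compatible with the common trivialisation on $U\setminus\{p_i\}$ inherited from $\sF|_{X\setminus D}$, this matches the inclusion as subsheaves of $\tau_*\tau^*\pi_{n*}\sF$.

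The principal obstacle is bookkeeping: keeping track of the various canonical root line bundles on $X_{\lvD,\vec{r}}$ and $X_{\lvD_{red},\vec{r}/\vec{d}}$, together with the precise identification $\alpha^*\sN_i \cong (\sN_i')^{n_i/d_i}$, so that the two applications of Proposition \ref{p:push} land on the same global uniformiser at $p_i$. Once the ambient sheaf $\tau_*\tau^*\pi_{n*}\sF$ is in place as a scaffold, no separate gluing step is needed, because the inclusion is uniquely determined by its restriction to $X\setminus D$ and the local calculation shows that this canonical extension factors through $\pi_{n*}\sF(n_1p_1+\ldots+n_kp_k)$.
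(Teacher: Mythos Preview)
Your argument is correct. The local computation is exactly what is needed, and your bounds $0\le \lfloor n_iw_j/r_i\rfloor < n_i$ give the containment summand by summand.

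The route differs from the paper's only in how the canonical map is set up. The paper produces it from the unit of the adjunction $\sF\hookrightarrow \alpha_*\alpha^*\sF$: tensoring by $\sN_{1,\vec n}^{r_1}\otimes\cdots\otimes\sN_{k,\vec n}^{r_k}$ and applying $\pi_{n*}$ yields a square
\[
\begin{array}{ccc}
\pi_*\alpha^*\sF & \xrightarrow{\ \lambda\ } & \pi_*\alpha^*\bigl(\sF\otimes\sN_{1,\vec n}^{r_1}\otimes\cdots\otimes\sN_{k,\vec n}^{r_k}\bigr)\\[2pt]
\uparrow & & \uparrow{\scriptstyle\mu}\\[2pt]
\pi_{n*}\sF & \longrightarrow & \pi_{n*}\sF(n_1p_1+\ldots+n_kp_k)
\end{array}
\]
and then checks locally that $\lambda$ factors through $\mu$. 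You instead embed both sheaves into $\tau_*\tau^*\pi_{n*}\sF$ via their common restriction to $X\setminus D$. The two ambient sheaves agree up to the obvious identification (the paper's top right is a finite twist of $\pi_*\alpha^*\sF$, which over $X\setminus D$ is again $\pi_{n*}\sF$), so the ``canonical'' maps coincide. Your framing has the advantage of making the word \emph{canonical} transparent---there is only one way to extend the identity on $X\setminus D$---while the paper's adjunction square makes the map available before one knows it is injective. Either way, the substance is the same local line-bundle check you carried out.
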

\begin{proof}
We denote the canonical line bundles on $X_{\lvD,\vec{r}}$ by 
\[
 \mathscr{N}_{1,\vec{n}}, \mathscr{N}_{2,\vec{n}}, \ldots, \mathscr{N}_{k,\vec{n}}.
\]

We have a diagram
\begin{center}
\beginpgfgraphicnamed{Dessin-g10}
\begin{tikzpicture}
 \node (bl) at (0,0) {$\sF$};
 \node (tl) at (0,2) {$\alpha_{*}\alpha^{*}\sF$} ;
 \node (tr) at (4,2) {$\alpha_{*}\alpha^{*}\left(\sF\otimes \mathscr{N}_{n_{1}}^{r_{1}} \otimes \ldots \otimes \mathscr{N}_{n_{k}}^{r_{k}}\right)$} ;
 \node (br) at (4,0) {$\sF\otimes \mathscr{N}_{1,\vec{n}}^{r_{1}} \otimes \ldots \otimes \mathscr{N}_{k,\vec{n}}^{r_{k}}$};
 \draw [>=latex,right hook->] (bl) -- (tl);
 \draw [>=latex,->] (tl) -- (tr);
 \draw [>=latex,->] (bl) -- (br);
 \draw [>=latex,right hook->] (br) -- (tr);
\end{tikzpicture}
\endpgfgraphicnamed
\end{center}
We apply $\pi_{\vec{n},*}$ to obtain a diagram
\begin{center}
\beginpgfgraphicnamed{Dessin-g11}
\begin{tikzpicture}
 \node (bl) at (0,0) {$\pi_{\vec{n},*}\sF$};
 \node (tl) at (0,2) {$\pi_{*}\alpha^{*}\sF$} ;
 \node (tr) at (4,2) {$\pi_{*}\alpha^{*}\left(\sF\otimes \mathscr{N}_{n_{1}}^{r_{1}} \otimes \ldots \otimes \mathscr{N}_{n_{k}}^{r_{k}}\right)$} ;
 \node (br) at (4,0) {$\pi_{\vec{n},*} \sF (n_1p_1 + n_2p_2 + \ldots + n_kp_k).$};
 \draw [>=latex,right hook->] (bl) -- (tl);
 \draw [>=latex,->] (tl) --node[above=1pt] {$\lambda$} (tr);
 \draw [>=latex,->] (bl) -- (br);
 \draw [>=latex,right hook->] (br) --node[right=1pt] {$\mu$} (tr);
\end{tikzpicture}
\endpgfgraphicnamed
\end{center}

The question is now local and is easily checked.

\end{proof}

\begin{thm}
\label{t:pullback}
We have
\[
\sqrt[\vec{n}]{(\ef_n\sF)_{\bullet}}_{\bullet} \simeq (\ef \alpha^{*}\sF)_{\bullet}.
\]
\end{thm}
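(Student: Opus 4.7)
The plan is to realize both parabolic bundles as subsheaves of a common ambient sheaf and then verify the equality of these subsheaves by a local computation. By the preceding proposition, $\pi_{*}\alpha^{*}\sF$ sits canonically inside $\pi_{n*}\sF(n_{1}p_{1}+\ldots+n_{k}p_{k}) = (\ef_n\sF)_{(0,\ldots,0)}(n_{1}p_{1}+\ldots+n_{k}p_{k})$, and by its very construction $\sqrt[\vec{n}]{(\ef_n\sF)_\bullet}$ is defined as a family of subsheaves of the same ambient sheaf. So both sides of the asserted isomorphism live canonically as parabolic subbundles of $\tau_{*}\tau^{*}\ef_n\sF$, and the problem reduces to identifying which subsheaf each one is at each weight $\vec{\alpha}$.

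The next step is to localize. Equality of subsheaves is a local condition on $X$, and by Proposition \ref{p:local} every vector bundle on $X_{\lvD,\vec{r}}$ is, locally on $X$, a direct sum of line bundles; moreover, in the local setting $\mathrm{Pic}$ is generated by the canonical roots $\sN_i$. Using axiom (ii) of Definition \ref{d:pb} the verification at each $\vec{\alpha}$ reduces to the case of a single nonzero coordinate, i.e.\ to a single parabolic point. Since $\ef_n$, $\ef$, $\alpha^{*}$ and (after checking this point) the operation $\sqrt[\vec{n}]{\cdot}$ all respect direct sums, the proof is further reduced to the case $\sF = \sN^{w}$ with $0 \le w < r$ in the one-point setting; here $\alpha^{*}\sN^{w} \cong \sN_{\mathrm{red}}^{nw/d}$ by the construction of $\alpha$.

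For this single root line bundle the computation is exactly the one carried out in the worked example. Proposition \ref{p:push} gives
\[
\ef_n(\sN^{w})_{l/r} = \sO(np)^{\lfloor(w-l)/r\rfloor}, \qquad \ef(\alpha^{*}\sN^{w})_{dl/r} = \sO(p)^{\lfloor(nw-dl)/r\rfloor},
\]
and unravelling the definition of $\sqrt[n]{\cdot}$ on the filtration $\sF_{1}=\sO,\ \sF_{2}=\sO(-np)$ of weight $w/r$, with the division $wn = ar+e$, produces precisely the subsheaves $\sO(ap)$ for $0 \le x \le e/r$ and $\sO((a-1)p)$ for $e/r < x < 1$, matching $\ef(\alpha^{*}\sN^{w})$ as computed in the example. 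Tracking the shift isomorphisms $j_i$ through this identification shows that the match is as parabolic bundles and not merely weight by weight.

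The main obstacle I expect is verifying additivity of $\sqrt[\vec{n}]{\cdot}$ under direct sums of root line bundles of different weights. As the rank-two example shows, the underlying filtration of a direct sum $\sN^{w_1}\oplus\sN^{w_2}$ is a three-step filtration whose weights are the union of the individual weights, and the sheaves $W_{ij}^{x}$ mix the two summands; one must check that the subsheaves cut out of the ambient bundle by this combined filtration are precisely the direct sum of the subsheaves obtained from each summand separately. Once this coherence under direct sums is in place, the reduction steps above go through and the theorem follows from the explicit root-line-bundle calculation.
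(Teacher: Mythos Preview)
Your approach matches the paper's: realize both sides as subsheaves of the common ambient $(\ef_n\sF)_{(0,\ldots,0)}(n_1p_1+\cdots+n_kp_k)$, localize, reduce to a single parabolic point, and use Proposition~\ref{p:local} to get a direct sum of root line bundles.

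The one organizational difference is precisely at the point you flag as the main obstacle. Rather than reducing to rank one and separately proving that $\sqrt[\vec{n}]{\cdot}$ respects direct sums, the paper works directly with the general local form
\[
(\ef_n\sF)_{l/r}\;=\;\bigoplus_{j=1}^{k}\bigl(\sO(np)^{\lfloor (w_j-l)/r\rfloor}\bigr)^{\oplus\rho_j},\qquad 0\le w_1<\cdots<w_k<r,
\]
writes out each $W_j^0$ explicitly (the $j$th one has the correct twist on the first $j$ blocks and $\sO(np)$ on the rest), and reads off the intersection $\bigcap_j W_j^0$ as the expected direct sum. This single computation simultaneously handles the rank-one case and the additivity you were worried about, so your obstacle dissolves rather than requiring a separate lemma. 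The paper also computes only at $l=0$ and invokes weight-shifting for the other values, just as you suggest with the $j_i$.
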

\begin{proof}
We use \ref{r:ms}. Both are then subbundles of 
$\ef_n\sF_{\bullet}(n_{1}p_{1}+\ldots + n_{k}p_{k})$ and hence the question is once again local. 
We may assume that there is only one parabolic point. Applying \ref{p:local} and \ref{t:correspondence}
we can assume $(\ef_n\sF)_\bullet$ is of the form :
$$
\frac{l}{r}  \mapsto  (\sO(p)^{n(\lfloor \frac{w_1-l}{r} \rfloor)})^{\oplus\rho_1} \oplus \ldots \oplus 
 (\sO(p)^{n(\lfloor \frac{w_k-l}{r} \rfloor)})^{\oplus\rho_k}
$$
with $0\le w_1< w_2 < \ldots <  w_k<r.$ 
Pulling back root line bundles along the morphism 
$$
\alpha : X_{p,r/d} \rightarrow X_{np,r}
$$
we get $\alpha^*(\sN_n)= \sN_1^{(n/d)}$ where $d=\gcd(r,n)$.
Using \ref{p:push} it follows that $(\ef \alpha^{*}\sF)_{\bullet}$ is the parabolic bundle
$$
\frac{l}{r}  \mapsto  (\sO(p)^{(\lfloor \frac{nw_1-l}{r} \rfloor)})^{\oplus\rho_1} \oplus \ldots \oplus 
 (\sO(p)^{(\lfloor \frac{nw_k-l}{r} \rfloor)})^{\oplus\rho_k}.
$$
We need to compute $\sqrt[\vec{n}]{(\ef_{n}\sF)_{\bullet}}_\bullet$. We compute the value at $l=0$.
One can deduce the general result by shifting weights.
So we compute :
\begin{eqnarray*}
W_{1}^0((\ef_n \sF)_\bullet) &=& (\sO(p)^{(\lfloor \frac{nw_1}{r} \rfloor)})^{\oplus \rho_1}  
\oplus \sO(np)^{\oplus \rho_3} \oplus\ldots \oplus \sO(np)^{\oplus \rho_k}  \\
W_{2}^0((\ef_n \sF)_\bullet) &=& (\sO(p)^{(\lfloor \frac{nw_2}{r} \rfloor)})^{\oplus \rho_1}  \oplus 
(\sO(p)^{(\lfloor \frac{nw_2}{r} \rfloor)})^{\oplus \rho_2}  \oplus \sO(np)^{\oplus \rho_4} \ldots \oplus \sO(np)^{\oplus \rho_k}  \\
\vdots & & \vdots.
\end{eqnarray*}
Taking intersection we get
$$
\bigcap W_j^0 = (\sO(p)^{(\lfloor \frac{nw_1}{r} \rfloor)})^{\oplus\rho_1} \oplus \ldots \oplus 
 (\sO(p)^{(\lfloor \frac{nw_k}{r} \rfloor)})^{\oplus\rho_k}.
$$
which is what was needed.
\end{proof}

\section{The Cyclic Case}
\label{s:cyclic}

Given a one dimensional representation $V$ of $\lvZ/c\lvZ$ we call the integer $j$, $0\le j \le c-1$ the \emph{weight} of the representation if the generator $1+ c\lvZ$ acts by multiplication by  $e^{2\pi j\sqrt{-1}/c}$.

Suppose that $q:X \to Y$ is a $G$-cover, ramified at points $p_{1}, \ldots, p_{k}$ of $Y$. Suppose that the ramification index at
$p_i$ is $r_i$ and set  $\vec{r} = (r_{1}, \ldots, r_{k})$. 
Also, set $\mathbb{D} = (p_{1}, \ldots, p_{k}).$ By combining the results \ref{c:nori}, \ref{p:quotroot} and \ref{t:correspondence} we may view 
the cover as a tensor functor
$$
\sF_{q}: \Rep\text{-}G \rightarrow\vect_{\text{par}}(Y,\lvD, \vec{r}).
$$ 

If we choose preimages $q_i\in X$ of the $p_i$ we obtain cyclic subgroups $\lvZ/r_i\lvZ$ of $G$ that correspond to the stabilizers of 
$q_i$. We canonically identify the stabilizer with $\lvZ/r_i\lvZ$ by insisting that the stabilizer acts on the fiber of the sheaf $\sO(-q_i)$ at $q_i$ with weight one.

 Fix an irreducible representation $V$ of $G$. At each point $p_i$ we have a weight space decomposition of 
$$
V=\oplus_j W_j^i
$$
coming from the induced  action of the stabilizers $\lvZ/r_i\lvZ$. The spaces $W_j^i$ are representations of $\lvZ/r_i\lvZ$ and the generator of the group 
$\lvZ/r_i\lvZ$ acts by multiplication by $e^{2\pi j\sqrt{-1}/r_i}$. The numbers $j$ do not depend upon the choice of preimage $q_i$.

\begin{prop}
\label{p:genjump}
In the terminology of \ref{r:ms}, the weights of the $\sF_q(V)_\bullet$ at $p_i$ are $j/r_i$.
In other words, consider tuples
$$
I = (0,\ldots, 0, \underset{i\text{th}}{\frac{j}{r_{i}}}, 0, \ldots, 0) \qquad 
I' = (0,\ldots, 0, \underset{i\text{th}}{\frac{j+1}{r_{i}}}, 0, \ldots, 0).
$$
Then
\[
\sF_{q}(V)_I
=
\sF_{q}(V)_{I'}
\]
iff  $W^i_{j}=0$. 
\end{prop}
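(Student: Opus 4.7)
My plan is to reduce the question to a local computation around $p_i$ and then directly apply the orbifold-parabolic correspondence together with the pushforward formula of Proposition \ref{p:push}. Since the tuples $I$ and $I'$ differ only in the $i$th coordinate, everything happens in a neighborhood of $p_i$ disjoint from the other $p_j$, and Proposition \ref{p:quotroot} identifies the corresponding quotient stack with the local root stack $Y_{p_i, r_i}$. The convention that the stabilizer acts on $\sO(-q_i)|_{q_i}$ with weight one pins down the character under which $\mu_{r_i}$ acts on the tautological root line bundle $\sN_i$ at the residue gerbe of $p_i$.

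First I will unwind the construction to write $\sF_q(V) = \ef_{\lvD,\vec{r}}(\sE_V)$, where $\sE_V$ is the orbifold bundle on $Y_{\lvD,\vec{r}} \cong [X/G]$ whose pullback to $X$ is the equivariant trivial bundle $V \otimes \sO_X$. Locally near $p_i$, decompose $V = \bigoplus_{j=0}^{r_i-1} W_j^i$ into stabilizer weight spaces; matching characters produces a local isomorphism
\[
\sE_V \cong \bigoplus_{j=0}^{r_i-1} W_j^i \otimes \sN_i^{j}
\]
near $p_i$, with the exponent reflecting the weight convention fixed in the statement.

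Next I apply the formula for $\ef_{\lvD,\vec{r}}$ together with Proposition \ref{p:push}: for $I = (0,\ldots,0,l/r_i,0,\ldots,0)$,
\[
\sF_q(V)_I = \pi_*(\sN_i^{-l} \otimes \sE_V) \cong \bigoplus_{j=0}^{r_i-1} W_j^i \otimes \sO(p_i)^{\lfloor (j-l)/r_i \rfloor}.
\]
For $0 \le l \le r_i-1$ the floor equals $0$ when $j \ge l$ and $-1$ when $j < l$. Comparing the values at $l = j$ and $l = j+1$, only the $j$th summand changes, from $W_j^i \otimes \sO$ to $W_j^i \otimes \sO(-p_i)$, so the quotient $\sF_q(V)_I/\sF_q(V)_{I'}$ is $W_j^i \otimes k(p_i)$, which vanishes exactly when $W_j^i = 0$.

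I expect the main obstacle to be the character-matching step: one must verify carefully that the stabilizer convention of the statement (pinned down by the weight-one action on $\sO(-q_i)|_{q_i}$) is compatible with the character of $\sN_i$ at the residue gerbe of $p_i$, so that $W_j^i$ corresponds to $\sN_i^{j}$ in the local decomposition above. A direct check using the identification $\alpha^*\sN_i = \sO_X(q_i)$ from the proof of Proposition \ref{p:quotroot} handles this. Once the local decomposition is correct, the remainder is the mechanical pushforward calculation.
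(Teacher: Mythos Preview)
Your proposal is correct and follows essentially the same route as the paper: both arguments reduce to a local computation near $p_i$ via Proposition~\ref{p:quotroot} and then identify the parabolic filtration with weight spaces of the $\mu_{r_i}$-action. The only cosmetic difference is that the paper passes to the cover $X$ and computes $\pi'_*(\sO(-lD_i)\otimes V\otimes\sO_X)^G$ directly as $\mu_{r_i}$-invariants on $k[[s]]$, whereas you stay on the root-stack side, invoke Proposition~\ref{p:local} to split $\sE_V$ locally as $\bigoplus_j W_j^i\otimes\sN_i^{j}$, and then apply Proposition~\ref{p:push}; these are two phrasings of the same computation. The sign issue you flag (whether $W_j^i$ matches $\sN_i^{j}$ or $\sN_i^{-j}$) is exactly the content of the paper's convention that the stabilizer act with weight one on $\sO(-q_i)|_{q_i}$, and your proposed check via $\alpha^*\sN_i\cong\sO_X(q_i)$ is the right way to pin it down.
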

\begin{proof}
By Proposition \ref{p:quotroot} we have a diagram
\begin{center}
\beginpgfgraphicnamed{Dessin-g12}
\begin{tikzpicture}
 \node (b) at (2,0) {$Y$};
 \node (tl) at (-1,2) {$X$} ;
 \node (t) at  (2,2) {$\left[ X / G \right]$} ;
 \node (tr) at (5,2) {$Y_{(\mathbb{D},\vec{r})}$} ;
 \draw [>=latex,->] (tl) --node[right=3pt] {$\pi'$} (b);
 \draw [>=latex,->] (tl) -- (t);
\draw [>=latex,->] (t) --node[above=1pt] {$\sim$} (tr);
\draw [>=latex,->] (t) --node[right=1pt] {$\pi$} (b);
\end{tikzpicture}
\endpgfgraphicnamed
\end{center}
If $\sE$ is a $G$-equivariant bundle on $X$ which is the pullback of some $\tilde{\sE}$ on $\left[X/G\right]$, then 
$\pi_{*}(\tilde{\sE}) = \pi'_{*}(\sE)^{G}$.  
Set $D_i=\pi^*(p_i)_\red$. Hence
\[
\pi_{*}(\sN_{1}^{l_{1}} \otimes \ldots \otimes \sN_{k}^{l_{k}} \otimes \tilde{\sE}) = \pi'_{*}(\sO(l_{1}D_{1}) \otimes \ldots \sO(l_{k}D_{k}) \otimes \sE)^{G}.
\]
The question is now local. In formal neighbourhoods of $q_i$ and $p_i$ the morphism comes from a morphism of algebras of the form
\begin{eqnarray*}
 k[[t]] & \rightarrow & k[[s]] \\
 t &\mapsto & s^{r_i}.
\end{eqnarray*}
The group action is by multiplication by roots of unity. Computing invariants gives the result.
\end{proof}

Denote by $F_{m}$ a free group on the symbols $x_1,\ldots, x_m$. Consider the surjection
 $q:F_{m} \twoheadrightarrow \lvZ/c\lvZ$ that sends $x_i\mapsto 1$. 
There is an associated cover $X_q \rightarrow\mathbb{P}^{1}$ ramified possibly at 
$\{p_{1}, \ldots, p_{m}\}\cup \{\infty\}$ for some $p_{i} \in {\mathbb P}^{1}\setminus\{\infty\}$. Set 
$\vec{c}= (c, \ldots, c, \frac{c}{\gcd \{c,m\}})\in \mathbb{Z}^{m+1}$,
$\mathbb{D} = (p_{1}, \ldots, p_{m}, \infty),$ and 
$D= p_{1} + \ldots +p_{m}+\infty.$  For the remainder of this section $V_j$ will denote the one dimensional
representation of $\lvZ/c\lvZ$ where $1+c\lvZ$ acts by multiplication by $e^{2\pi j\sqrt{-1}/c}$.
Set 
\[
{\sF}_{X_{q}}(V_{j})_{(0, \ldots, 0)}=:
\sO(s_{j}),
\]
where $s_{j}$ is some integer. 
Also, let $w_{j}$ denote the rational number in $[0,1)$ which differs from $-\frac{mj}{c}$ by an integer. 

The purpose of this section is to describe the functor $\sF_{X_{q}}$. To this end, in the above proposition take $X=X_{q},$ 
$Y= \mathbb{P}^{1}$, $G= \mathbb{Z}/c\mathbb{Z},$ $k=m+1,$ $D_{j} = p_{j}$ for $1 \leq j \leq m$, $D_{m+1}=\infty$, and $\sF_q(V_j) = \sF_{X_{q}}(V_{j})_{\bullet}.$ This gives
\begin{cor}
\label{l:1}
Let $t = \frac{a}{\gcd(m,c)}$ and suppose $0 \leq t \leq w_{j}.$ Then
\[
\sF_{X_q}(V_{j})_{(0, \ldots, 0, t)} = \sO(s_{j}),
\]
and
\[
\sF_{X_q}(V_{j})_{(0, \ldots, 0, w_{j}+ \frac{\gcd(m,c)}{c})} = \sO(s_{j})(-\infty).
\]
Moreover,
if the non-zero entry of the tuple is at the $i$th position for $1 \leq i \leq m,$
\[
\sF_{X_q}(V_{j})_{(0, \ldots, 0, \frac{j+1}{c}, 0, \ldots, 0)} = \sO(s_{j})(-p_{i}),
\]
but
\[
\sF_{X_q}(V_{j})_{(0, \ldots, 0, \frac{j}{c}, 0, \ldots, 0)} = \sO(s_{j}).
\]
\end{cor}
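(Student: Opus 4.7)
The plan is to apply Proposition \ref{p:genjump} directly to the cyclic cover $X_q \to \mathbb{P}^1$, as the lead-in to the corollary suggests. The main content is computing the weight of the one-dimensional representation $V_j$ with respect to the stabilizer at each ramification point, and then reading off the single jump in the filtration of the (rank-one) parabolic line bundle $\sF_{X_q}(V_j)_\bullet$.

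First I would record the ramification data. The cover is determined by $q: F_m \twoheadrightarrow \lvZ/c\lvZ$ sending each generator $x_i$ to $1$; the local monodromy around $\infty$ is then $x_\infty = (x_1 \cdots x_m)^{-1} \mapsto -m \in \lvZ/c\lvZ$. Hence the stabilizer at a preimage of $p_i$ (for $1 \leq i \leq m$) is all of $\lvZ/c\lvZ$, giving ramification index $c$, while the stabilizer at the preimage of $\infty$ is the cyclic subgroup generated by $-m$, of order $r_\infty := c/\gcd(c,m)$. This is consistent with the tuple $\vec{c}$ appearing in the statement.

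Next I would compute the weights. Under the canonical identification (described just before Proposition \ref{p:genjump}) of the stabilizer at $p_i$ with $\lvZ/c\lvZ$ via weight-one action on $\sO(-q_i)$, the generator acts on $V_j$ by $e^{2\pi i j/c}$, so in the notation of that proposition $W^i_j = V_j$ and $W^i_\ell = 0$ for $\ell \neq j$. At $\infty$ the canonical generator of the stabilizer corresponds, up to the weight-one normalization, to the image $-m$ of the local monodromy, and so acts on $V_j$ by $e^{-2\pi i mj/c}$. Rewriting this as $e^{2\pi i w/r_\infty}$ with $w \in \{0, 1, \ldots, r_\infty - 1\}$ gives $w = w_j \cdot r_\infty$, where $w_j$ is precisely the representative of $-mj/c$ modulo $\lvZ$ defined in the statement.

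Finally I would feed this into Proposition \ref{p:genjump} together with Remark \ref{r:ms}. Because $V_j$ is one-dimensional and $\sF_{X_q}(V_j)_{(0,\ldots,0)} = \sO(s_j)$, at each parabolic point the Mehta--Seshadri filtration has a single nontrivial step, located at the unique nonzero weight from the previous paragraph. At $p_i$ this yields $\sF_{X_q}(V_j)_{(0,\ldots,l/c,\ldots,0)} = \sO(s_j)$ for $l \leq j$ and $\sO(s_j)(-p_i)$ for $l \geq j+1$; at $\infty$ it yields $\sO(s_j)$ for $t \leq w_j$ and $\sO(s_j)(-\infty)$ for $t \geq w_j + \gcd(m,c)/c$, the next element of $\frac{1}{r_\infty}\lvZ$ after $w_j$. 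The four displayed identities are then immediate. The delicate point will be Step 2 at $\infty$: reconciling the weight-one normalization on $\sO(-q_\infty)$ with the image $-m$ of the local monodromy, so that the sign of $-mj/c$ used to define $w_j$ comes out correctly.
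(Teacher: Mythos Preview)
Your proposal is correct and follows exactly the approach the paper intends: the paper's ``proof'' consists only of the sentence immediately preceding the corollary, which instructs the reader to specialize Proposition~\ref{p:genjump} to the cyclic cover, and you have supplied precisely those details (the ramification indices, the stabilizer weights at the finite points and at $\infty$, and the resulting single-step filtration of the rank-one bundle). Your caution about the sign at $\infty$ is well placed but resolves as you have it: the canonical generator at $q_\infty$ is the local monodromy $x_\infty\mapsto -m$, which acts on $V_j$ by $e^{-2\pi i mj/c}$, giving parabolic weight $w_j$ as defined.
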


Let $\delta_{ij}$ denote the Kronecker delta function.
\begin{lemma}
\label{l:keycalc}
 If $1\le w_1 + w_j$ then
$$
(\sF_{X_q}(V_1)_{\bullet}\otimes\sF_{X_q}(V_j)_{\bullet})_{(0,\ldots,0)}  
= \sO(s_1 + s_j + 1+m\delta_{c-1,j}).
$$
Otherwise, 
$$
(\sF_{X_q}(V_1)_{\bullet}\otimes\sF_{X_q}(V_j)_{\bullet})_{(0,\ldots,0)} =
\sO(s_1 + s_j+m \delta_{c-1,j}).
$$
\end{lemma}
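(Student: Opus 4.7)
The plan is to invoke the BBN description of the parabolic tensor product (Definition~\ref{d:BBN}, Lemma~\ref{l:tensor}) and reduce the problem to a local combinatorial computation at each parabolic point.

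First I will assemble, from Corollary~\ref{l:1}, axiom (ii) of Definition~\ref{d:pb}, and the periodicity isomorphisms $j_i$, the uniform global formula
\[
\sF_{X_q}(V_j)_{\vec\beta} \;=\; \sO(s_j)\Bigl(-\sum_{i=1}^m \lceil \beta_i - j/c\rceil\, p_i \;-\; \lceil \beta_{m+1} - w_j\rceil\, \infty\Bigr)
\]
valid for every $\vec\beta \in \pardom$ (the ceiling functions correctly encode both the jump at each weight and the periodicity into negative weights). In particular both $\sF_{X_q}(V_1)_\bullet$ and $\sF_{X_q}(V_j)_\bullet$ have rank one, so their tensor product has rank one, and its value at $(0,\ldots,0)$ is a line bundle $\sO(N)$ on $\mathbb P^1$.

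Next, by Lemma~\ref{l:tensor} this line bundle is the subsheaf of $\tau_*\tau^*(\sF_{X_q}(V_1)\otimes\sF_{X_q}(V_j))$ generated by the family $\sF_{X_q}(V_1)_{\vec\beta}\otimes\sF_{X_q}(V_j)_{-\vec\beta}$ as $\vec\beta$ ranges over $\pardom$. On the smooth curve $\mathbb P^1$ the sum of a family of invertible subsheaves of a torsion-free sheaf is again invertible, with local degree at each point equal to the pointwise maximum in the family. Since the local order at $p_i$ of $\sF_{X_q}(V_1)_{\vec\beta}\otimes \sF_{X_q}(V_j)_{-\vec\beta}$ depends only on $\beta_i$, the minimizations decouple and
\[
N \;=\; s_1 + s_j \;-\; \sum_{i=1}^m \min_{\beta_i}\bigl(\lceil \beta_i - 1/c\rceil + \lceil -\beta_i - j/c\rceil\bigr) \;-\; \min_{\beta_{m+1}}\bigl(\lceil\beta_{m+1} - w_1\rceil + \lceil-\beta_{m+1} - w_j\rceil\bigr),
\]
where $\beta_i$ ranges over $\tfrac1c\lvZ$ and $\beta_{m+1}$ over $\tfrac{\gcd(m,c)}{c}\lvZ$.

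Then I will evaluate each combinatorial minimization. At a finite point, writing $\beta_i = b/c$, the quantity $\lceil (b-1)/c\rceil - \lfloor (b+j)/c\rfloor$ depends only on $b\bmod c$, and a case check over $b \in \{0,\ldots,c-1\}$ shows the minimum is $-1$ precisely when $j = c-1$ (achieved at $b=1$) and $0$ otherwise; summing over the $m$ finite points yields the contribution $m\,\delta_{c-1,j}$. At $\infty$, setting $c' = c/\gcd(m,c)$, $w_1 = a_1/c'$, $w_j = a_j/c'$ with $a_1,a_j \in \{0,\ldots,c'-1\}$, and $\beta_{m+1} = b/c'$, an analogous case analysis shows that the minimum is $-1$ iff some residue $r\in\{0,\ldots,c'-1\}$ satisfies $r\le a_1$ and $r+a_j\ge c'$, i.e.\ iff $a_1+a_j\ge c'$, equivalently $w_1 + w_j \ge 1$; otherwise the minimum is $0$. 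Assembling the three contributions $s_1+s_j$, $m\,\delta_{c-1,j}$, and $0$ or $1$ produces the two cases of the lemma. The main obstacle is the first step: extracting the uniform formula for $\sF_{X_q}(V_j)_{\vec\beta}$ requires tracking the fibered-product axiom across all $m+1$ parabolic points together with the periodicity isomorphisms, taking care that the divisibility structure at $\infty$ is governed by $c' = c/\gcd(m,c)$ rather than by $c$.
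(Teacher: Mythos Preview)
Your proof is correct and follows essentially the same route as the paper: both compute the BBN tensor product by analysing, at each parabolic point separately, the maximal degree contributed by the family $\sF_{X_q}(V_1)_{\vec\beta}\otimes\sF_{X_q}(V_j)_{-\vec\beta}$. The paper proceeds case-by-case, first varying only the $\infty$-coordinate to detect the $w_1+w_j\ge 1$ condition, then varying a single finite coordinate to see that nothing extra appears for $j<c-1$, and finally treating $j=c-1$ with general tuples $\vec u$; your version is a more streamlined reorganisation, extracting the uniform ceiling-function formula for $\sF_{X_q}(V_j)_{\vec\beta}$ once and then making the decoupling of the pointwise minimisations explicit. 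That explicit decoupling step is a genuine clarification---in the paper it is the content of ``putting this together'' for $j<c-1$, but is not spelled out---so your presentation is arguably tighter, at the cost of having to justify the uniform formula carefully (which, as you note, requires tracking axiom (ii) and the periodicity isomorphisms across all parabolic points, with the modified divisibility $c'=c/\gcd(m,c)$ at $\infty$).
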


\begin{proof}
Consider $t\in \frac{\gcd(m,c)}{c}\lvZ$ and set 
$$
\vec{t} = (0,\ldots, 0, t).
$$
Write $t=n+f$ where $f\in [0,1)$. We compute
$$
(\sF_{X_q}(V_1)_{\vec{t}}\otimes\sF_{X_q}(V_j)_{-\vec{t}}).
$$
The possibilities are 
$$
(\sF_{X_q}(V_1)_{\vec{t}}\otimes\sF_{X_q}(V_j)_{-\vec{t}})
=\left\{
\begin{array}{c}
\sO(s_1+s_j+1) \\
\sO(s_1+s_j) \\
\sO(s_1+s_j-1) \\
\sO(s_1+s_j-2) 
\end{array}\right.
$$
We are interested in when the first possibility occurs as the second occurs at $t=0$ so when we take the sheaf
generated by all possible tensor products the value will be at least this sheaf. 

Suppose that $1\le w_1 + w_j$. Now take $t=1-w_j$. Then
$$
\sF_{X_q}(V_j)_{-\vec{t}} = \sO(s_j +1).
$$
and 
$$
\sF_{X_q}(V_1)_{\vec{t}} = \sO(s_1).
$$

Conversely, suppose that  
$$
(\sF_{X_q}(V_1)_{\vec{t}}\otimes\sF_{X_q}(V_j)_{-\vec{t}}) = \sO(s_1+s_j+1).
$$
We have either
$$
w_1 - 1 \le w_j - 1 < w_1 \le w_j
$$
or
\[
w_j-1 \le w_1 -1 <w_j \le w_1.
\]
We conclude that $-f \le w_j-1$ and $f\le w_1$ or we must have
$-f \le w_1 -1$ and $f\le w_j$. 

We conclude that there is a $t$ for which 
$$
(\sF_{X_q}(V_1)_{\vec{t}}\otimes\sF_{X_q}(V_j)_{-\vec{t}}) = \sO(s_1 + s_j +1)
$$
if and only if $w_1+w_j\ge 1$.

Now we turn our attention to the other parabolic points. We preserve the notation above except we set 
$$
\vec{t} = (0,\ldots, 0, t,0,\ldots, 0)
$$
and now $t\in \frac{1}{c}\lvZ$. 
We have a chain of inequalities
$$
\frac{1}{c}-1 \le \frac{j}{c}-1 < \frac{1}{c} \le \frac{j}{c}.
$$
Suppose firstly that $j <c-1.$ Then if $-f\le \frac{j}{c}-1$ we have $f\ge 1 - \frac{j}{c} > \frac{1}{c}$.
If $-f =  \frac{1}{c} - 1$ then $f> \frac{j}{c}$. It follows that 
$$
(\sF_{X_q}(V_1)_{\vec{t}}\otimes\sF_{X_q}(V_j)_{-\vec{t}}) = \sO(s_1+s_j).
$$
When $j<c-1,$ the result follows by putting this together.

Now fix $j=c-1$. Set
\[
\vec{u}=(u_1, \ldots, u_m, u_{m+1})
\]
where $u_i\in \frac{1}{c}\lvZ$ for $1\leq i\leq m$ and $u_{m+1} \in \frac{\gcd{(m,c)}}{c},$
 and write $u_i = n_i + f_i$ where $f_i \in [0,1).$

Here, computing 
\[
\sF_{X_{q}}(V_1)_{\vec{u}} \otimes\sF_{X_q}(V_{c-1})_{-\vec{u}}
\]
the possibilities are 
\[
\sO(s_1+s_{c-1}+g(\vec{u}))
\]
where $g(\vec{u})$ ranges over all integers from $-2$ to $m+1:$ Indeed, as before, the parabolic point at infinity gives at most a contribution of $+1$ to $g(\vec{u})$ and at least $-2,$ while each finite parabolic point contributes either 0 or $+1$.

At the same time,
\begin{equation}
\label{e:c-1}
\sF_{X_q}(V_1)_{(\frac{1}{c},\ldots,\frac{1}{c},0)}\otimes \sF_{X_q}(V_{c-1})_{(-\frac{1}{c},\ldots,-\frac{1}{c},0)}
=\sO(s_1+s_{c-1}+m).
\end{equation}
This means that
\[
(\sF_{X_q}(V_1)_{\bullet}\otimes\sF_{X_q}(V_{c-1})_{\bullet})_{(0\ldots,0)} \supseteq \sO(s_1 + s_{c-1} + m)
\]
from the definition of parabolic tensor product.

Hence, we need only determine when $g(\vec{u})=m+1.$

Suppose that $1 \leq w_1+w_{c-1}.$
Then if  $\vec{u} = (\frac{1}{c}, \ldots, \frac{1}{c}, 1-w_{c-1})$,
$$
\sF_{X_q}(V_{c-1})_{-\vec{u}} = \sO(s_{c-1}+m +1)
$$
and 
$$
\sF_{X_q}(V_1)_{\vec{u}} = \sO(s_1).
$$
Conversely, suppose that there exists some $\vec{u}$ such that
\[
\sF_{X_{q}}(V_1)_{\vec{u}} \otimes\sF_{X_q}(V_{c-1})_{-\vec{u}} 
=
\sO(s_1 +s_{c-1}+m+1).
\]
This case only occurs when either $-f_{m+1}\leq w_{c-1}-1$ and $f_{m+1} \leq w_1$ or $-f_{m+1}\leq w_1 -1$ and $f_{m+1}\leq w_{c-1}$ by the same argument as before. Hence, necessarily, 
$w_1+w_{c-1} \geq 1$.
\end{proof}

\begin{remark}
\label{r:powers}
$\sF_{X_q}(V_{j})_{\bullet}$ is the $j$th parabolic tensor power of   $\sF_{X_q}(V_{1})_{\bullet}$: Indeed, since
$\sF_{X_q}$ is a tensor functor, we must have $\sF_{X_q}(V_{1})_{\bullet}^{\otimes c} =\sF_{X_q}(V_{1}^{\otimes c})_{\bullet} = \sF_{X_q}(V_{0})_{\bullet},$ the trivial parabolic bundle. Similarly, $\sF_{X_q}(V_{1})_{\bullet}^{\otimes l} = \sF_{X_q}(V_{j})_{\bullet}$ whenever 
$l \equiv j$ modulo $c$.

In order to determine $\sF_{X_q}(V_j)_\bullet$ it thus suffices to compute $s_{1}$.
\end{remark}

For each $j$ with $1 \leq j \leq c-1,$ set
\[
\kappa_{m,c}^{(j)}=\left\{\begin{array}{cc}
1 & \text{when }w_1+w_{j} \geq 1
\\
0 & \text{otherwise}
\end{array}
\right.
\]
and
\[
\kappa_{m,c} 
=
\sum_{j=1}^{c-1}\kappa_{m,c}^{(j)}
= \left|\{j: 1 \leq j \leq c-1, w_1 +w_j\geq 1\}\right|.
\]

\begin{thm}
\label{t:cyclic} With notation as above, 
\[
s_{1}=
-\frac{m+\kappa_{m,c}}{c}
\]
\end{thm}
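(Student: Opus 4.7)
The plan is to combine the tensor-functoriality of $\sF_{X_q}$ (via Remark \ref{r:powers}) with the explicit degree computation of Lemma \ref{l:keycalc} to set up a telescoping recursion for the integers $s_j$, then close it up using the boundary values $s_0 = s_c = 0$.

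More precisely, since $V_{j+1} \cong V_1 \otimes V_j$ as $\lvZ/c\lvZ$-representations and $\sF_{X_q}$ is a tensor functor, one has $\sF_{X_q}(V_{j+1})_{\bullet} \cong \sF_{X_q}(V_1)_{\bullet} \otimes \sF_{X_q}(V_j)_{\bullet}$. Evaluating at $(0,\ldots,0)$ and applying Lemma \ref{l:keycalc} yields the recurrence
\[
s_{j+1} = s_1 + s_j + \kappa_{m,c}^{(j)} + m \delta_{c-1,j},
\]
valid for $0 \leq j \leq c-1$, where we extend the definition of $\kappa_{m,c}^{(j)}$ to $j=0$ and observe that $\kappa_{m,c}^{(0)} = 0$ because $w_0 = 0$ and $w_1 < 1$. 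Thus $\sum_{j=0}^{c-1}\kappa_{m,c}^{(j)} = \kappa_{m,c}$.

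For the boundary values, $V_0$ is the trivial representation, so $\sF_{X_q}(V_0)_{\bullet}$ is the tensor unit $\sO_{X\bullet}$; in particular its underlying sheaf at $(0,\ldots,0)$ is $\sO_{\mathbb{P}^1}$, giving $s_0 = 0$. The relation $V_c \cong V_0$ in $\Rep\text{-}\lvZ/c\lvZ$ then forces $s_c = 0$ as well. Telescoping the recurrence from $j = 0$ to $j = c-1$ now gives
\[
0 = s_c - s_0 = c\, s_1 + \sum_{j=0}^{c-1}\kappa_{m,c}^{(j)} + m = c\, s_1 + \kappa_{m,c} + m,
\]
and solving for $s_1$ delivers the claimed formula.

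There is no serious obstacle left at this stage: all the substantive work — the degree of the $(0,\ldots,0)$-piece of the parabolic tensor product $\sF_{X_q}(V_1)_{\bullet} \otimes \sF_{X_q}(V_j)_{\bullet}$, including the exceptional $+m$ contribution when $j = c-1$ coming from the ramification index $c/\gcd(c,m)$ at infinity — has already been carried out in Lemma \ref{l:keycalc}. The only thing to be mildly careful about is that the divisibility $c \mid (m + \kappa_{m,c})$ needed for $s_1 \in \lvZ$ is automatic, being forced by the recursion closing up correctly at $j = c$.
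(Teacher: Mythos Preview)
Your proof is correct and follows essentially the same route as the paper: both apply Lemma~\ref{l:keycalc} iteratively via Remark~\ref{r:powers} to obtain the recursion $s_{j+1}=s_1+s_j+\kappa_{m,c}^{(j)}+m\delta_{c-1,j}$, then close it up using $s_c=0$ to solve for $s_1$. Your telescoping presentation is slightly more streamlined than the paper's two-step iteration (first to $s_{c-1}$, then to $s_c$), but the content is identical.
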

\begin{proof}

Applying Lemma \ref{l:keycalc} iteratively, along with Remark \ref{r:powers}, one finds that
\[
\sO(s_{c-1})
=
\sO((c-1)s_1+\kappa_{m,c} - \kappa_{m,c}^{(c-1)}).
\]

Repeat the calculation once more (in the special case that $j=c-1$) to obtain
\[
\sO(s_c)
=
\sO(cs_1+\kappa_{m,c} +m).
\]
The result now follows.
\end{proof}

The proof of \ref{t:cyclic} yields the
\begin{cor} 
\label{c:sj}
For $1 \leq j \leq c-1,$ the $s_{j}$ of  \ref{l:1} are given in terms of $s_{1}$ by
\[
s_j = js_1 +\sum_{i=1}^{j-1}\kappa_{m,c}^{(i)}  = -j\left(\frac{m+\kappa_{m,c}}{c}\right) + \sum_{i=1}^{j-1}\kappa_{m,c}^{(i)}.
\]
\end{cor}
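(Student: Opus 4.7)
The plan is to extract a one-step recursion from Lemma \ref{l:keycalc} and then solve it by induction, closing by substituting the value of $s_1$ already computed in Theorem \ref{t:cyclic}.

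First, by Remark \ref{r:powers}, since $\sF_{X_q}$ is a tensor functor and $V_{j+1} \cong V_1 \otimes V_j$ for $0 \le j \le c-2$, there is a natural isomorphism
\[
\sF_{X_q}(V_{j+1})_\bullet \;\cong\; \sF_{X_q}(V_1)_\bullet \otimes \sF_{X_q}(V_j)_\bullet.
\]
Evaluating both sides at the tuple $(0,\ldots,0)$ and using Lemma \ref{l:keycalc}, together with the fact that $\delta_{c-1,j}=0$ whenever $1 \le j \le c-2$, yields the recursion
\[
s_{j+1} \;=\; s_1 + s_j + \kappa_{m,c}^{(j)} \qquad (1 \le j \le c-2).
\]

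Next, I establish the first equality of the corollary by induction on $j$. The base case $j=1$ is the tautology $s_1 = 1\cdot s_1 + 0$ (the empty sum). Assuming the formula for some $j$ with $1 \le j \le c-2$, the recursion gives
\[
s_{j+1} \;=\; s_1 + \Bigl( j s_1 + \sum_{i=1}^{j-1} \kappa_{m,c}^{(i)}\Bigr) + \kappa_{m,c}^{(j)} \;=\; (j+1)s_1 + \sum_{i=1}^{j} \kappa_{m,c}^{(i)},
\]
completing the induction and establishing $s_j = j s_1 + \sum_{i=1}^{j-1}\kappa_{m,c}^{(i)}$.

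Finally, the second equality is immediate upon substituting the value $s_1 = -(m+\kappa_{m,c})/c$ from Theorem \ref{t:cyclic}. There is no substantive obstacle here: the corollary is a direct book-keeping consequence of the computation already performed in the proof of Theorem \ref{t:cyclic}, combined with the fact that the recursive step in that proof can be applied at every intermediate index $j$, not merely the final one.
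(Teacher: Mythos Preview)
Your proof is correct and follows essentially the same approach as the paper: the paper simply states that the corollary is yielded by the proof of Theorem~\ref{t:cyclic}, which itself consists of iteratively applying Lemma~\ref{l:keycalc} together with Remark~\ref{r:powers}. You have made that iteration explicit as the recursion $s_{j+1}=s_1+s_j+\kappa_{m,c}^{(j)}$ and solved it by induction, which is exactly what the paper's phrase ``applying Lemma~\ref{l:keycalc} iteratively'' amounts to.
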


\begin{cor}
\label{c:bounding}
We have $s_0= 0$ and $s_j\le -1$ for $j>0$. 
\end{cor}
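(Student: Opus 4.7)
The plan is to dispatch $s_0=0$ via axiom F4 (the trivial representation $V_0$ is forced to land on $\sO_{{\mathbb P}^1}$, so $s_0=0$), and to deduce $s_j\le -1$ for $j\ge 1$ directly from the formula
\[
s_j = -j\left(\frac{m+\kappa_{m,c}}{c}\right) + \sum_{i=1}^{j-1}\kappa_{m,c}^{(i)}
\]
of Corollary \ref{c:sj}, combined with a single auxiliary identity.

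The auxiliary identity is $\kappa_{m,c}=cw_1$. The definition of $\kappa_{m,c}^{(i)}$ as the indicator of $w_1+w_i\ge 1$, together with the elementary behaviour of fractional parts, yields the recurrence $w_{i+1}=w_i+w_1-\kappa_{m,c}^{(i)}$. Telescoping from $i=1$ to $i=c-1$ and using $w_c=\{-m\}=0$ gives $\kappa_{m,c}=cw_1$. Splitting into cases according as $c\mid m$ or not then upgrades this to $m+\kappa_{m,c}=c\lceil m/c\rceil\ge c$, the inequality being valid since $m\ge 1$ (a nontrivial surjection $F_m\twoheadrightarrow {\mathbb Z}/c{\mathbb Z}$ with $c>1$ requires at least one finite ramification point).

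The conclusion is now immediate bookkeeping. Each $\kappa_{m,c}^{(i)}\in\{0,1\}$, so $T_j:=\sum_{i=1}^{j-1}\kappa_{m,c}^{(i)}\le j-1$, while $m+\kappa_{m,c}\ge c$ gives $j(m+\kappa_{m,c})/c\ge j$. Substituting into the formula above produces $s_j\le -j+(j-1)=-1$ for every $j\ge 1$.

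The only step requiring genuine thought is the identity $\kappa_{m,c}=cw_1$; after that the bound falls out with no further calculation. As a sanity check, combining this identity with the analogously-derived relation $T_j=jw_1-w_j$ simplifies the whole formula to $s_j=-mj/c-w_j=-\lceil mj/c\rceil$, recovering the expected closed form.
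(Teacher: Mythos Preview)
Your argument is correct, and the final bookkeeping step---bounding $T_j\le j-1$ and combining with $(m+\kappa_{m,c})/c\ge 1$ inside the formula of Corollary~\ref{c:sj}---is exactly what the paper does. The difference lies in how you reach $(m+\kappa_{m,c})/c\ge 1$. The paper simply observes that $s_1$ is by construction the degree of a line bundle on ${\mathbb P}^1$, hence an integer, and that $s_1=-(m+\kappa_{m,c})/c$ is manifestly negative (since $m\ge 1$ and $\kappa_{m,c}\ge 0$); integrality then forces $s_1\le -1$ in one line. You instead establish the identity $\kappa_{m,c}=cw_1$ directly via the recurrence $w_{i+1}=w_i+w_1-\kappa_{m,c}^{(i)}$ and telescoping. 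Your route is longer, but it is self-contained (no appeal to the geometric origin of $s_1$) and it delivers the explicit closed form $s_j=-\lceil mj/c\rceil$, which the paper does not record.
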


\begin{proof}
The assertion for $s_0$ is clear. 
The numbers are necessarily integers. We have, by definition $s_1<0$ and hence 
$s_1\le -1$. The result now follows.
\end{proof}

By the above computation, $\kappa_{m,c}$ is necessarily congruent to $-m$ modulo $c.$ This fact may be shown independently:
\begin{lemma}
\[
\kappa_{m,c} \equiv -m \text{ modulo }c.
\]
\end{lemma}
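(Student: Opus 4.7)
The plan is to interpret $\kappa_{m,c}^{(j)}$ as a carry digit and then telescope. Recall that by definition $w_j = \{-mj/c\}$, where $\{x\}$ denotes the fractional part of $x$. The first observation I would record is the standard identity
\[
\{x\} + \{y\} = \{x+y\} + \chi(x,y),
\]
where $\chi(x,y) \in \{0,1\}$ equals $1$ precisely when $\{x\} + \{y\} \geq 1$. Applying this with $x = -m/c$ and $y = -mj/c$ gives $w_1 + w_j = w_{j+1} + \kappa_{m,c}^{(j)}$, so each summand admits the clean formula $\kappa_{m,c}^{(j)} = w_1 + w_j - w_{j+1}$.

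Next I would telescope:
\[
\kappa_{m,c} = \sum_{j=1}^{c-1}(w_1 + w_j - w_{j+1}) = (c-1)w_1 + w_1 - w_c = c\,w_1 - w_c.
\]
Since $w_c = \{-m\} = 0$ (because $m \in \mathbb{Z}$), this simplifies to $\kappa_{m,c} = c\,w_1 = c\{-m/c\}$.

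Finally, I would evaluate $c\{-m/c\}$ explicitly: writing $m = qc + r$ with $0 \leq r < c$, one has $\{-m/c\} = 0$ if $r=0$ and $\{-m/c\} = (c-r)/c$ if $r > 0$. In either case $c\,\{-m/c\} = c - r$ (interpreted as $0$ when $r=0$), and since $r \equiv m \pmod c$, this is congruent to $-m$ modulo $c$, as required.

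I do not expect any real obstacle here; the only point requiring care is the telescoping identity, which hinges on recognizing $\kappa_{m,c}^{(j)}$ as the elementary carry obtained when fractional parts are summed. Once this is in place the rest is bookkeeping.
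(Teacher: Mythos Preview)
Your proof is correct and actually establishes the exact value $\kappa_{m,c}=c\,w_1$, from which the congruence is immediate. The paper takes a genuinely different route: it fixes $v$ with $m\equiv -v\pmod c$, $0<v<c$, partitions the indices $j\in\{1,\ldots,c-1\}$ according to the value of $\lfloor vj/c\rfloor$, and then argues directly that in each of the $v$ resulting blocks exactly one index $j$ (namely the largest one in the block) satisfies $w_1+w_j\geq 1$; this yields $\kappa_{m,c}=v$ by a head count. Your carry-digit identity $\kappa_{m,c}^{(j)}=w_1+w_j-w_{j+1}$ and the ensuing telescope are considerably shorter and more conceptual, and they avoid any case analysis; the paper's argument, by contrast, is more explicit about \emph{which} indices $j$ actually contribute to $\kappa_{m,c}$, information your method does not directly provide. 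Both arrive at the same exact formula, not merely the congruence.
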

\begin{proof}
When $m\equiv 0$ modulo $c$, it follows that $w_j = 0$ for all $1 \le j \le c-1,$ and hence $\kappa_{m,c} =0.$

Suppose now that $m\equiv -v$ modulo $c$, for some $0 <v<c.$
Then $w_1 = \frac{v}{c},$ while for $j$ with $1 \le j \le c-1,$
\[
w_j = \left\{
\begin{array}{cc}
\frac{vj}{c} &  0 < vj < c
\\
\vdots & \vdots \\
\frac{vj-tc}{c} & tc \le vj < (t+1)c
\\
\vdots & \vdots \\
\frac{vj - (v-1)c}{c} & (v-1)c \le vj < vc.
\\
\end{array}
\right.
\]
For $t$ with $0 \le t \le c-1,$ then $tc \le vj < (t+1)c$ implies $0 \le vj-tc <c.$ Now let $j_t$ be the largest integer value of $j$ satisfying this inequality. Then $v(j_t+1) -tc\geq c,$ so that 
\[
w_1+w_{j_t} = \frac{v(1+j_t) - tc}{c}
\geq 1.
\]
At the same time, for any integer $j$ satisfying the inequality which also has $j <j_t,$ then $j+1 \le j_t$ and 
necessarily
\[
w_1+ w_{j} \le \frac{vj_t -tc}{c} <1.
\]
So among the integers $j$ such that $tc \le vj <(t+1)c$, there is exactly one with $w_1 +w_j \geq 1.$ There are exactly $v$ such inequalities, so $\kappa_{m,c} = v.$
\end{proof}

\section{Reduction to the cyclic case}
Suppose that $X_{q} \rightarrow \mathbb{P}^{1}$ is a {Galois} covering with $\Deck(X_{q}/\mathbb{P}^{1}) = G$ ramified at 0,1 and $\infty.$ Let 
$q: F_{2} \twoheadrightarrow G$ denote the corresponding surjection and $\mathbb{T} = (0,1,\infty).$ Then as before, by \ref{c:nori}, \ref{p:quotroot} and \ref{t:correspondence} the cover may be viewed as a functor
\[
F_{X_q}:\Rep\text{-}G \rightarrow \vect_{\para}(\mathbb{P}^{1}, \mathbb{T}).
\]
Our goal in this section is to produce a bound on the $u_{j}$ for which 
\[
F_{X_q}(V)_{(0, \ldots, 0)} = \sO(u_{1})\oplus \ldots \oplus \sO(u_{k})
\]
for a fixed $V \in \Ob(\Rep\text{-}G).$

The idea is to reduce to the cyclic case by delooping the ramification at 0 as follows:
 Suppose that the ramification index at 0 is $m$ - i.e. under the mapping $q$, the image of the generator of $F_{2}$ 
corresponding to a loop about 0 in $\pi_{1}(\mathbb{P}^{1})$ has order $m$ in $G.$  Form the base change 
\begin{center}
\beginpgfgraphicnamed{Dessin-g13}
\begin{tikzpicture}
 \node (tl) at (0,2) {$X_{q} \times_{\mathbb{P}^{1}} \mathbb{P}^{1}$};
 \node (tr) at (4,2) {$X_{q}$} ;
 \node (bl) at (0,0) {$\mathbb{P}^{1}$} ;
\node (br) at (4,0) {$\mathbb{P}^{1}$};
 \draw [>=latex,->] (tl) -- (tr);
 \draw [>=latex,->] (tl) -- (bl);
\draw [>=latex,->] (bl) --node[above=1pt] {$z\mapsto z^{m}$} (br);
\draw [>=latex,->] (tr) -- (br);
\end{tikzpicture}
\endpgfgraphicnamed
\end{center}
and denote the desingularization of $X_{q} \times_{\mathbb{P}^{1}} \mathbb{P}^{1}$ by $Y$.
 Now $Y \rightarrow \mathbb{P}^{1}$ ramifies at $\infty$ and the $m$th roots of unity, $\mu_{m}.$ 
Hence $Y$ corresponds to a homomorphism $h: F_{m} \rightarrow G$ which factors through $F_{2}$ by mapping the generators of 
$F_{m}$ corresponding to each root of unity to the generator $\sigma_{1}$ of $F_{2}$ corresponding to 1.

Then the image of $h$ is generated by $q(\sigma_{1}),$ which is a cyclic subgroup of $G$, say $\mathbb{Z}/c\mathbb{Z}.$

We have a decomposition
$Y = \coprod_{\tau \in G/{\rm Im}(h)} Y_{\tau}$ where the $Y_{\tau}$ are all cyclic covers.

Using the argument at the start of \S\ref{s:cyclic}, we obtain a tensor functor
$$
F_Y :\Rep\text{-}G \rightarrow \vect_{\para}(\mathbb{P}^{1}, (\mu_{m}, \infty)).
$$

\begin{lemma}
 The functor $F_Y$ factors as
\begin{center}
\beginpgfgraphicnamed{Dessin-g14}
\begin{tikzpicture}
 \node (tl) at (0,2) {$\Rep\text{-}G$};
 \node (tr) at (4,2) {$\vect_{\para}(\mathbb{P}^{1}, (\mu_{m}, \infty))$} ;
 \node (bl) at (0,0) {$\Rep\text{-}\lvZ/c\lvZ$};
 \draw [>=latex,->] (tl) --node[above=1pt] {$F_Y$} (tr);
 \draw [>=latex,->] (tl) -- (bl);
 \draw [>=latex,->] (bl) --node[below=1pt] {$F_{Y_e}$} (tr);
\end{tikzpicture}
\endpgfgraphicnamed
\end{center}
\end{lemma}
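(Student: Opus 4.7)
The plan is to take the downward arrow $\Rep\text{-}G \to \Rep\text{-}\lvZ/c\lvZ$ to be restriction along the inclusion $\lvZ/c\lvZ = \operatorname{Im}(h)\hookrightarrow G$, and to prove the factorization by exhibiting, for every $V \in \Rep\text{-}G$, a natural tensor isomorphism $F_Y(V)\iso F_{Y_e}(V|_{\operatorname{Im}(h)})$.

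The geometric input is that $G$ acts transitively on $\pi_0(Y)$ with $\operatorname{Stab}_G(Y_e)=\operatorname{Im}(h)$, so the $G$-cover $Y\to \mathbb{P}^1$ is canonically isomorphic to the induced cover $G\times_{\operatorname{Im}(h)}Y_e\to\mathbb{P}^1$. Passing to quotient stacks gives a canonical identification
\[
[Y/G]\iso [Y_e/(\lvZ/c\lvZ)],
\]
and by Proposition \ref{p:quotroot} both sides coincide with the root stack $\mathbb{P}^1_{(\mu_m,\infty),\vec{c}}$, so the orbifold-parabolic correspondence of Theorem \ref{t:correspondence} applies uniformly on either side.

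On this common root stack, $F_Y(V)$ corresponds under $\ef_{(\mu_m,\infty),\vec{c}}$ to the descent of the $G$-equivariant bundle $V\otimes\sO_Y$, whereas $F_{Y_e}(V|_{\operatorname{Im}(h)})$ corresponds to the descent of the $\operatorname{Im}(h)$-equivariant bundle $V|_{\operatorname{Im}(h)}\otimes\sO_{Y_e}$. The identification $Y = G\times_{\operatorname{Im}(h)}Y_e$ then identifies these two descended bundles: restriction of sections to the component $Y_e \subset Y$ is the sheaf-theoretic form of Frobenius reciprocity and gives a canonical isomorphism compatible with the equivariant structures. Applying $\ef_{(\mu_m,\infty),\vec{c}}$ converts this into the desired isomorphism of parabolic bundles, and naturality in $V$ together with tensor compatibility are immediate from the naturality of restriction and of descent.

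The main thing to verify is that this isomorphism respects the parabolic data (the filtrations and weights at $\mu_m$ and at $\infty$). This should reduce to the observation, used implicitly in Proposition \ref{p:genjump}, that the parabolic filtration at a point $p$ depends only on the action of the stabilizer of a preimage of $p$; since any such stabilizer is contained in $\operatorname{Im}(h)$ by construction, the filtration obtained from the $G$-action on $V$ agrees with the one obtained from the $\operatorname{Im}(h)$-action on $V|_{\operatorname{Im}(h)}$, and the comparison is local and straightforward at each parabolic point.
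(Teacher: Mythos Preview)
Your proof is correct and follows essentially the same approach as the paper. The paper's two-sentence argument simply observes that $F_Y$ and $F_{Y_e}$ are computed by taking $G$-invariants (resp.\ $\lvZ/c\lvZ$-invariants) of pushforwards as in Proposition~\ref{p:genjump}, and that the disjoint union $Y=\coprod_{\tau}Y_\tau$ immediately reduces $G$-invariants on $Y$ to $\lvZ/c\lvZ$-invariants on $Y_e$; your formulation via the induced cover $Y\cong G\times_{\operatorname{Im}(h)}Y_e$, the identification $[Y/G]\cong[Y_e/(\lvZ/c\lvZ)]$, and Frobenius reciprocity is a stack-theoretic repackaging of exactly this computation.
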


\begin{proof}
 The functors are computed by taking invariants as in the proof of \ref{p:genjump}. The result
now follows from the disjoint union above.
\end{proof}

We need :
\begin{prop}
\label{p:a_0}
If $\lvD = (p_1, \ldots, p_k)$ with $\vec{r} = (r_1, \ldots, r_k)$, and $\lvD'=(p_0,p_1, \ldots, p_k)$ with $\vec{r}' = (1, r_1, \ldots, r_k),$ then there exist natural equivalences of tensor categories
\begin{center}
\beginpgfgraphicnamed{Dessin-g15}
\begin{tikzpicture}
\node (l) at (0,0) {$\ef ': \vect_{\para}(\lvD', \vec{r}')$};
\node (r) at (4,0) {$\vect_{\para}(\lvD, \vec{r}): \eg'.$};
\draw [>=latex,-> ] (1.5,.1) -- (2.5,.1);
\draw [>=latex,->] (2.5,-.1) -- (1.5,-.1);
\end{tikzpicture}
\endpgfgraphicnamed
\end{center}
\end{prop}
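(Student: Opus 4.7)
The plan is to use the parabolic–orbifold correspondence of Theorem \ref{t:correspondence} to reduce the claim to an isomorphism of root stacks, and then invoke the fact that the equivalence $\ef_{\lvD,\vec{r}}$ is a tensor equivalence (as noted in the remark following its definition).

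First, I would observe that adding the data $(p_0,1)$ to the root stack construction is trivial. Concretely, the morphism $\theta_1\colon[\lvA^1/\lvG_m]\to[\lvA^1/\lvG_m]$ induced by $x\mapsto x^1$ is the identity, so the fibre product computing the $0$th factor of $X_{\lvD',\vec{r}'}$ collapses. Thus the projection
\[
X_{\lvD',\vec{r}'}
=X\times_{[\lvA^{k+1}/\lvG_m^{k+1}],\theta_{\vec{r}'}}[\lvA^{k+1}/\lvG_m^{k+1}]
\stackrel{\sim}{\longrightarrow}
X\times_{[\lvA^{k}/\lvG_m^{k}],\theta_{\vec{r}}}[\lvA^{k}/\lvG_m^{k}]
=X_{\lvD,\vec{r}}
\]
is a canonical isomorphism of stacks. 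In terms of $T$-points (Remark \ref{r:lift}), on the $0$th factor one is giving a line bundle $M_0$ with an isomorphism $\phi_0\colon M_0\iso f^{*}\sO(p_0)$ and a section $t_0$ pulling back to $s_{p_0}$, which determines $(M_0,\phi_0,t_0)$ uniquely up to unique isomorphism.

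Next I would define $\ef'$ and $\eg'$ as the composites
\[
\ef':\vect_\para(\lvD',\vec{r}')
\xrightarrow{\;\eg^{X}_{\lvD',\vec{r}'}\;}
\vect(X_{\lvD',\vec{r}'})
\stackrel{\sim}{\longrightarrow}
\vect(X_{\lvD,\vec{r}})
\xrightarrow{\;\ef^{X}_{\lvD,\vec{r}}\;}
\vect_\para(\lvD,\vec{r}),
\]
and $\eg'$ in the opposite direction, using Theorem \ref{t:correspondence} for both sides. Each arrow is a tensor equivalence, so the composites are as well.

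The main (and only mildly non-trivial) point will be to make the equivalence and its inverse concrete, so that one can use them computationally later: $\eg'$ should amount to restricting a functor $\sF'_\bullet$ on $(\lvZ\times\frac{1}{r_1}\lvZ\times\cdots\times\frac{1}{r_k}\lvZ)^{\op}$ to the slice where the $0$th coordinate vanishes, and $\ef'$ should be the extension prescribed by $\sF'_{(n_0,\vec{\alpha})}:=\sF_{\vec{\alpha}}\otimes\sO(-n_0 p_0)$, with the isomorphism $j_0$ supplied tautologically by this formula. Checking that these functors are quasi-inverse reduces to verifying axioms (i) and (ii) of Definition \ref{d:pb}, which in the extra coordinate become trivial because $r_0=1$ forces every weight there to be an integer. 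Compatibility with tensor products follows either from the corresponding compatibility for $\ef_{\lvD,\vec{r}}$, or directly from the formulas in Definition \ref{def0} and the BBN description (Lemma \ref{l:tensor}), since tensoring with $\sO(-n_0p_0)$ respects the tensor product of parabolic bundles.
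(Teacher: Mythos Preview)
Your proposal is correct and follows precisely the paper's own argument: observe that $X_{\lvD',\vec{r}'}\cong X_{\lvD,\vec{r}}$ because taking a first root is trivial, and then invoke Theorem~\ref{t:correspondence}. The additional explicit description of $\ef'$ and $\eg'$ at the level of parabolic functors is not in the paper's proof but is correct and useful for computations.
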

\begin{proof}
The root stacks $X_{\lvD, \vec{r}}$ and $X_{\lvD', \vec{r}'}$ are isomorphic. Now invoke Theorem \ref{t:correspondence}.
\end{proof}

\begin{remark}
\label{r:a_0}
Let $\zeta_m$ denote a primitive $m$th root of unity. Then in the notation of \ref{p:a_0} set $\lvD = (\zeta_m, \zeta_m^2, \ldots, \zeta_m^{m-1}, 1, \infty)$ and $\vec{r} = (c, \ldots, c, \frac{c}{\gcd(m,c)}).$ Also take $p_0 = 0.$ By \ref{p:res} and \ref{t:pullback} we have that $f^*_{\para}(F_{X_q}) = \eg' F_Y.$ 

Since $\eg'$ is an equivalence of tensor categories, the constants computed in section\ref{s:cyclic} pertaining to $F_Y$ are the same as those relating to $\eg'F_Y.$
\end{remark}

We denote by $\kappa_{m,c}$ and $\kappa_{m,c}^{(i)}$ the numbers defined before Theorem \ref{t:cyclic} for the cover $Y_e \rightarrow \mathbb{P}^1$. We will also make use of the notation set up after \ref{p:easypullback}. In particular, let $a_1$ denote the minimum among the $a_{i1}$. Further denote by $a_0$ and $a_{\infty}$ 
 $a_{i1}$ for the index $i$ corresponding to the points 0 and $\infty$ respectively.

The representation $V$ viewed as a representation of $\lvZ/c\lvZ$ decomposes into weight spaces :
$$
V = V_{j_1} \oplus \ldots \oplus V_{j_k}.
$$
We have 
$$
F_{Y_e}(V)_{(0,\ldots, 0)} = \sO(t_1) \oplus \ldots \oplus \sO(t_k)
$$
where the $t_i$ are computed in \ref{t:cyclic} and \ref{c:sj}. We may reindex so that
$$
t_1 \le t_2 \le \ldots t_k \le 0.
$$
The last inequality is by \ref{c:bounding}.

\begin{thm}
\label{t:bound}
With the above notation, consider
\[
F_{X_q}(V)_{(0,\ldots,0)}=\sO(u_{1})\oplus \ldots \oplus \sO(u_{k}).
\]
We reindex so that
$$
u_1 \le u_2 \le \ldots \le u_k.
$$
Then the $u_j$ are bounded above as follows:
\[
u_j \leq  \frac{t_j}{m} - \frac{a_0}{m}  - \frac{a_{\infty}}{m}.
\]
(Hence the $u_j$ are negative, by \ref{c:bounding}.)
\end{thm}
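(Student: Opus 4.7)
The plan is to parabolically pull back $F_{X_q}(V)$ along $f:\mathbb{P}^1\to\mathbb{P}^1$, $z\mapsto z^m$, and compare the underlying vector bundles at weight $(0,\ldots,0)$ on either side of the identity $f^{*}_{\para}F_{X_q}(V)\cong \eg' F_Y(V)$ from Remark \ref{r:a_0}. Since $\eg'$ merely inserts a trivial parabolic point at $0$, it leaves the underlying bundle at origin weight unchanged; combined with the factorisation $F_Y = F_{Y_e}\circ\mathrm{res}$ and the computations of Section \ref{s:cyclic}, this shows that the right-hand side has origin fibre $\bigoplus_{i=1}^k \sO(t_i)$.

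For the left-hand side, Theorem \ref{t:pullback} identifies $f^{*}_{\para}F_{X_q}(V)$ with $\sqrt[\vec{n}]{f^* F_{X_q}(V)}_\bullet$, where $f^*$ is the easy pullback of Proposition \ref{p:easypullback} and $\vec{n}=(m,1,\ldots,1,m)$ lists the multiplicities of $f^*\mathbb{T}=(m\cdot 0,\zeta_1,\ldots,\zeta_m,m\cdot\infty)$. By Proposition \ref{p:easypullback} (i.e.\ flat base change for $\pi_*$), the underlying bundle of $f^* F_{X_q}(V)$ at origin weight is $f^*\bigl(\bigoplus_i\sO(u_i)\bigr)=\bigoplus_i\sO(mu_i)$. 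The definition of the $\vec{n}$-th root then gives $\sqrt[\vec{n}]{f^* F_{X_q}(V)}_{(0,\ldots,0)}=\sum_k \sF^0_k$. For each $k$, the inequalities $a_{k,1}\le a_{k,j}$ force $W_{kj}^0\supseteq \sF_{(0,\ldots,0)}(a_{k,j}p_k)\supseteq \sF_{(0,\ldots,0)}(a_{k,1}p_k)$, whence $\sF^0_k\supseteq \sF_{(0,\ldots,0)}(a_{k,1}p_k)$. Restricting attention to the indices corresponding to $0$ and $\infty$ and using the equality $\sF_{(0,\ldots,0)}(a_0\cdot 0)+\sF_{(0,\ldots,0)}(a_\infty\cdot\infty)=\sF_{(0,\ldots,0)}(a_0\cdot 0+a_\infty\cdot\infty)$ on $\mathbb{P}^1$, I conclude
\[
\sqrt[\vec{n}]{f^* F_{X_q}(V)}_{(0,\ldots,0)}\;\supseteq\;\bigoplus_{i=1}^k \sO(mu_i+a_0+a_\infty).
\]

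Combining the two descriptions yields an injection of locally free sheaves of equal rank on $\mathbb{P}^1$,
\[
\bigoplus_i \sO(mu_i+a_0+a_\infty)\hookrightarrow \bigoplus_i \sO(t_i).
\]
The standard comparison lemma for subsheaves of direct sums of line bundles on $\mathbb{P}^1$ (proved by iteratively projecting onto a summand of largest degree and using $\mathrm{Hom}(\sO(a),\sO(b))=0$ for $a>b$) then gives, with both tuples ordered ascending, the inequality $mu_j+a_0+a_\infty\le t_j$ for each $j$, producing the claimed bound. The main obstacle is the careful accounting in the $\sqrt[\vec{n}]{\cdot}$ construction at weight $(0,\ldots,0)$: one must apply the filtration data $a_{k,j}$ to the pulled-back bundle $f^* F_{X_q}(V)$, which carries divisor multiplicities $m$ at $0$ and $\infty$, so that $a_0=\lfloor m\alpha_{0,1}\rfloor$ and $a_\infty=\lfloor m\alpha_{\infty,1}\rfloor$ become the relevant quantities which render the bound non-trivial.
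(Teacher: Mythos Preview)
Your proposal is correct and follows essentially the same route as the paper: both identify $f^{*}_{\para}F_{X_q}(V)_{(0,\ldots,0)}=\bigoplus\sO(t_j)$ via Remark \ref{r:a_0}, bound it below by $\bigoplus\sO(mu_j+a_0+a_\infty)$ using the root construction and the inequality $a_{k1}\le a_{kj}$, and then invoke the splitting-type comparison (Lemma \ref{l:inc}). The only cosmetic difference is that the paper carries the contribution $ma_1$ from the roots-of-unity points explicitly and then observes $a_1=0$, whereas you silently drop those terms; making that observation explicit would tighten your write-up.
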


\begin{proof} 
 We have 
\[
f^{*}(F_{X_q}(V)_{(0, \cdots, 0)}) = \sO(mu_{1}) \oplus \ldots \oplus \sO(mu_{k}).
\]
With $\zeta_m$ denoting a primitive $m$th root of unity as above,
the curve $Y$ ramifies over 
$$
p_1 =\zeta_m , \ldots, p_m = \zeta_m^m=1, p_{m+1}=\infty.
$$
By \ref{r:a_0} the parabolic pullback of $F_{X_q}(V)_{\bullet}$ also has 1-divisibility at $p_0:=0.$

Now by the definition of parabolic pullback,
$ f^{*}_{\para}F_{X_q}(V)_{(0, \ldots, 0)} $
contains the intersection 
$\cap_{j}W_{ij}^{0}$. Hence
$$
f^{*}_{\para}F_{X_q}(V)_{(0, \ldots, 0)} \supseteq(f^{*}(F_{X_q}(V)_{(0, \cdots, 0)})(a_{i1}))
$$
as $a_{i1}\le a_{ij}$. Notice that
$$
a_{11} = \ldots a_{m1} = a_1.
$$
Hence
\begin{eqnarray*}
&&
\sO(mu_{1})\oplus \ldots \oplus \sO(mu_{k})(a_0.0 + a_\infty.\infty + \sum a_{1}p_{i})
\\
&\simeq&
\sO(mu_{1}+a_0+ ma_{1}+a_{\infty})\oplus \ldots \oplus \sO(mu_{k}+a_0+ma_{1}+a_{\infty})\\
&\subseteq&
f^{*}_{\para}F_{X_q}(V)_{(0,\ldots, 0)} \\
& = & \sO(t_1) \oplus \ldots \oplus \sO(t_k).
\end{eqnarray*}
The result now follows from  \ref{l:inc} below and observing that $a_1=0$.
\end{proof}

\begin{lemma}
\label{l:inc}
If $\sO(s_{1}) \oplus \ldots \oplus \sO(s_{u}) \subseteq
\sO(t_{1}) \oplus \ldots \oplus \sO(t_{u}),$ there exists $\sigma \in S_{u}$ such that $s_{\sigma({j})} \leq t_{j}$ for all $j$ with
$1 \leq j \leq u.$
\end{lemma}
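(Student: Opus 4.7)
The plan is to realize the inclusion as a square matrix of sections of line bundles on $\mathbb{P}^1$ and argue via the determinant. Set $\phi : \bigoplus_{i=1}^u \sO(s_i) \hookrightarrow \bigoplus_{j=1}^u \sO(t_j)$. Because $\mathrm{Hom}(\sO(s_i), \sO(t_j)) = H^0(\mathbb{P}^1, \sO(t_j - s_i))$, the morphism $\phi$ is encoded by a $u \times u$ matrix $(\phi_{ji})$ whose $(j,i)$ entry is a global section of $\sO(t_j - s_i)$, and each such section is nonzero only when $t_j - s_i \geq 0$.

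Since $\phi$ is injective between locally free sheaves of the same rank on an integral scheme, it is generically an isomorphism, so the determinant
\[
\det(\phi) \in H^0\bigl(\mathbb{P}^1, \sO(\textstyle\sum_j t_j - \sum_i s_i)\bigr)
\]
is a nonzero section. Expanding via the Leibniz formula,
\[
\det(\phi) = \sum_{\sigma \in S_u} \mathrm{sgn}(\sigma) \prod_{i=1}^u \phi_{\sigma(i),i},
\]
so at least one summand must be nonzero. Fix such a $\sigma$; then each factor $\phi_{\sigma(i),i}$ is a nonzero section of $\sO(t_{\sigma(i)} - s_i)$, which forces $t_{\sigma(i)} \geq s_i$ for every $i$. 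Setting $\tau = \sigma^{-1}$ and substituting $i = \tau(j)$ gives $s_{\tau(j)} \leq t_j$ for all $j$, which is the desired permutation.

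The only delicate point is justifying that the determinant is nonzero from the hypothesis of an inclusion of sheaves; this follows from the fact that a sheaf injection of vector bundles of equal rank is generically an isomorphism on $\mathbb{P}^1$ (the cokernel is a torsion sheaf of finite length), so the determinant map is generically an isomorphism of line bundles, hence a nonzero section. After that, the Leibniz expansion argument is purely formal and requires no further estimates.
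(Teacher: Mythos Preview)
Your proof is correct. Both your argument and the paper's hinge on the same key fact: the inclusion $\phi$, being an injection of locally free sheaves of equal rank on an integral scheme, has nonzero determinant at the generic point. The difference is purely organizational. You expand $\det(\phi)$ directly via the Leibniz formula and read off the permutation from a surviving monomial, using that $H^0(\mathbb{P}^1,\sO(n))=0$ for $n<0$ to force $t_{\sigma(i)}\ge s_i$. The paper instead runs an induction on $u$: after ordering the $s_j$ and $t_j$, it isolates the largest $s_N$, notes that the entries of the last row of the matrix vanish in the columns where $t_j<s_N$, and then uses cofactor expansion along that row together with the inductive hypothesis to reach a contradiction if no suitable minor is injective. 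Your approach is shorter and extracts the permutation in one stroke; the paper's inductive argument is really a disguised version of the same Leibniz expansion, peeling off one row at a time.
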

\begin{proof}
When $u=1,$ this is well-known. Proceeding by induction, suppose that the assertion is known to be valid for all $u \leq N-1.$ Then consider an injection
\[
\phi: \sO(s_{1}) \oplus \ldots \oplus \sO(s_{N}) \hookrightarrow
\sO(t_{1}) \oplus \ldots \oplus \sO(t_{N})
\]
where the $s_{j}$ and $t_{j}$ may be taken to be ordered - i.e. $s_{1} \leq \ldots \leq s_{N}$ and $t_{1} \leq \ldots \leq t_{N}$. Necessarily, $s_{N} \leq t_{L}$ for some $L$, but if $s_{N} \leq t_{1}$ we are done. Suppose then that there exists some $i$ such that $t_{i-1} < s_{N} \leq t_{i}.$ For $j$ with $i \leq j \leq N,$ consider the mapping
\[
\phi_{j} : \sO(s_{1}) \oplus \ldots  \oplus \sO(s_{N-1})
\rightarrow \sO(t_{1}) \ldots \oplus \hat{\sO(t_{j})}
\oplus \ldots\sO(t_{N})
\]
induced from $\phi.$
Should there exist $j$ for which $\phi_{j}$ is injective, we are done by the inductive hypothesis. Suppose to the contrary that for every $j$, $\phi_{j}$ is not injective. Then we can show this implies the original $\phi$ could not have been injective: Indeed, $s_{N}>t_{i-1}$ implies that under $\phi$, the restricted morphism  $\sO(s_{N}) \rightarrow \sO(t_{1}) \oplus \ldots \oplus \sO(t_{i-1})$ is zero. 

Passing to the generic point of the curve the morphism $\phi$ is given by an $N\times N$ matrix.
The last row of this matrix begins with $i-1$ zero entries.
Computing the determinant of $\phi$ by cofactor expansion along this row, we find
\[
\det \phi = 0 + \det\phi_{i} \cdot \gamma_{i}
+ \ldots + \det \phi_{N} \cdot \gamma_{N}
\]
for some constants $\gamma_{j}$. Hence the morphism at the generic point is not injective. This is a contradiction as pullback to the generic point is flat.
\end{proof}

\begin{example}
\label{e:improve}
Denote by $Q_8$ the quaternion group of order 8. It has a two dimensional representation 
given in terms of matrices by

\begin{center}
\begin{tabular}{ccc}\vspace{.1cm}   
$i$ & $\mapsto$ &$\left( \begin{array}{cc} \sqrt{-1} & 0 \\ 0 & \sqrt{-1}  \end{array} \right)$\\
\vspace{.1cm}
$j$ & $\mapsto$ &$\left( \begin{array}{cc} 0 & 1 \\ -1 & 0  \end{array} \right)$\\  
$k$ & $\mapsto$ &$\left( \begin{array}{cc} 0 &\sqrt{-1}  \\  \sqrt{-1} & 0  \end{array} \right)$\\  
\end{tabular}
\end{center}
Consider the quotient $F_2\twoheadrightarrow Q_8$ with $x_0\mapsto j$, $x_1 \mapsto i$. 
As $x_1$ has a weight 3 eigenspace we have $t_1=-3$. Both $a_1$ and $a_\infty$ are 1.
Hence $u_1\le -2$.
\end{example}

It follows from the lower bound in \cite[theorem 5.12]{borne:07} that $u_1$ must be -2. 

\bibliographystyle{plain}	
\bibliography{mybib,mybib2}

\end{document}